\DeclareFontFamily{T1}{calligra}{}
\DeclareFontShape{T1}{calligra}{m}{n}{<->s*[1.5]callig15}{}
\DeclareMathAlphabet\mathcalligra   {T1}{calligra} {m} {n}
\DeclareMathAlphabet\mathzapf       {T1}{pzc} {mb} {it}
\DeclareMathAlphabet\mathchorus     {T1}{qzc} {m} {n}
\DeclareMathAlphabet\mathrsfso      {U}{rsfso}{m}{n}
\newtheorem{theorem}{Theorem}[section] % 1st argument is your name for it
\newtheorem{lemma}[theorem]{Lemma}     % 2nd argument is what is printed
\newtheorem{corollary}[theorem]{Corollary}
\newtheorem{proposition}[theorem]{Proposition}
\theoremstyle{definition}
\newtheorem{example}[theorem]{Example}
\newtheorem{remark}[theorem]{Remark}
\title[Maximal Unipotent subgroups of $\mathcal{S}$-arithmetic subgroups]{Arithmetic subgroups of Chevalley group schemes over function fields II: Conjugacy classes of maximal unipotent subgroups}
\newcommand{\stab}{\operatorname{Stab}}
\newcommand{\fix}{\operatorname{Fix}}
\newcommand{\SecD}{\operatorname{Ch}\left( \partial_\infty^{k} X_\mathcal{S} \right)}%{\mathbf{G}(k) \cdot D_0}
\newcommand{\p}{P}
\newcommand{\card}{\operatorname{Card}}
\newcommand{\OS}{\mathcal{O}_\mathcal{S}}
\newcommand{\X}{X_\mathcal{S}}
\newcommand{\D}{\partial_{\infty} D}
\begin{document}

\maketitle

\author{\textbf{Claudio Bravo}
\footnote{Centre de Mathématiques Laurent Schwartz, École Polytechnique, Institut Polytechnique de Paris, 91128 Palaiseau Cedex, France. Email: \email{claudio.bravo-castillo@polytechnique.edu}}
}
\author{\textbf{Benoit Loisel}
\footnote{Université de Poitiers (Laboratoire de Mathématiques et Applications, UMR7348), Poitiers, France. Email: \email{benoit.loisel@math.univ-poitiers.fr}}
%\footnote{ENS de Lyon, (Unité de Mathématiques Pures et Appliquées, UMR5669), Lyon, France}
}

\vspace{0.5cm}

\begin{abstract}
Let $\mathcal{C}$ be a smooth, projective, geometrically integral curve defined over a perfect field $\mathbb{F}$. Let $k=\mathbb{F}(\mathcal{C})$ be the function field of $\mathcal{C}$.
Let $\mathbf{G}$ be a split simply connected semisimple $\mathbb{Z}$-group scheme.
Let $\mathcal{S}$ be a finite set of places of $\mathcal{C}$.
In this paper, we investigate on the conjugacy classes of maximal unipotent subgroups of $\mathcal{S}$-arithmetic subgroups.
These are parameterized thanks to the Picard group of $\OS$ and the rank of $\mathbf{G}$.
Furthermore, these maximal unipotent subgroups can be realized as the unipotent part of natural stabilizer, which are the stabilizers of sectors of the associated Bruhat-Tits building.
We decompose these natural stabilizers in terms of their diagonalisable part and unipotent part, and we precise the group structure of the diagonalisable part.
\\

\textbf{MSC Codes:} 20G30, 20E45 (primary) 11R58, 20E42 (secondary)

\textbf{Keywords:} Arithmetic subgroups, algebraic function fields, Chevalley groups, maximal unipotent subgroups, Bruhat-Tits buildings.
\end{abstract}

%\tableofcontents

\section{Introduction}\label{section intro}

Let $\mathcal{C}$ be a smooth, projective, geometrically integral curve defined over a perfect field $\mathbb{F}$.
Let $\mathcal{S}$ be a finite set of closed points of $\mathcal{C}$.
We denote by $\mathcal{O}_{\mathcal{S}}$ the ring of rational functions of $\mathcal{C}$ that are regular outside $\mathcal{S}$, so that $\mathrm{Quot}(\OS)=k$.

In this article, all the considered group schemes are assumed to be linear, smooth and connected.
For such a group scheme $\mathbf{H}$ defined over a (commutative) ring $R$, we denote by $\mathbf{H}(R)$ the group of $R$-points of $\mathbf{H}$.

Let $\mathbf{G}_k$ be a split simply connected semisimple algebraic group defined over $k$.
Since $\mathbf{G}_k$ is assumed split simply connected and semisimple, it follows from \cite[Exp.~XXV, Cor.~1.3]{SGA3-3} that there exists a semisimple $\mathbb{Z}$-group scheme $\mathbf{G}$ such that $\mathbf{G} \otimes_\mathbb{Z} k \cong \mathbf{G}_k$.
Moreover, since $\mathbf{G}$ is defined over $\mathbb{Z}$, the group of $\OS$-points $\mathbf{G}(\OS)$ of $\mathbf{G}$ makes sense.

Recall that two subgroups $H_1, H_2$ of a group $H$ are called commensurable exactly when they contain a common finite index subgroup $H_{1,2}$, i.e., $H_{1,2} \subseteq H_1 \cap H_2$ and $[H_{1,2}: H_1]$, $[H_{1,2}: H_1] <\infty$.
A subgroup $G \subset \mathbf{G}(k)$ that is commensurable to $\mathbf{G}(\OS)$ is called an $\mathcal{S}$-arithmetic subgroup of $\mathbf{G}(k)$.
In~\cite[\S 1]{Raghunathan}, $\mathcal{S}$-arithmetic subgroups are defined as follows:
considering a faithful linear representation $\rho: \mathbf{G}_k \to \mathrm{SL}_{n,k}$ defined over $k$,
an $\mathcal{S}$-arithmetic subgroup is a subgroup of $\mathbf{G}(k)$ which is commensurable with $\mathbf{G}(k) \cap \rho^{-1} \left( \mathrm{SL}_n(\OS)\right)$.
According to~\cite[\S 1.4.5]{BT2}, $\mathbf{G}$ admits a faithful linear representation defined over $\mathbb{Z}$ which is a closed embedding. Thus, the natural inclusion $\mathbf{G}(\OS) \subset \mathbf{G}(k) \cap \rho^{-1} \left( \mathrm{SL}_n(\OS)\right)$ is an equality.
Hence, the definition of $\mathcal{S}$-arithmetic subgroups given by~\cite[\S 1]{Raghunathan} coincides with that of commensurable subgroups of $\mathbf{G}(\OS)$ given above.
Both definitions depend on either the choice of a representation or of a model of $\mathbf{G}$ over $\mathbb{Z}$.

In the following, we will consider certain subgroups of $\mathbf{G}(k)$ consisting in semisimple elements, or in unipotent elements. This makes sense from the Jordan decomposition of the linear algebraic group $\mathbf{G}_k$.
We say that a subgroup $U$ of $\mathbf{G}(k)$ is unipotent (resp. diagonalisable) if any element $u\in U$ is unipotent (resp. semisimple) in $\mathbf{G}(k)$.
In this paper, we focus on maximal unipotent subgroups of $\mathcal{S}$-arithmetic subgroups.

Let $F$ be a field of characteristic $0$ or a field of positive characteristic $p>0$ such that $[F:F^p] \leq p$.
For instance, $F$ can be a global field.
%, and let $\mathbf{G}$ be a simply connected quasi-split semisimple group.
Let $U$ be a unipotent subgroup of $\mathbf{G}(F)$.
Then, there exists a (maximal) unipotent subgroup-scheme $\mathbf{U}_F$ of $\mathbf{G}_F$ such that $U \subseteq \mathbf{U}_F(F)$. This result is a consequence of~\cite[Cor.~3.7]{BoTi2}, when $F$ has characteristic zero, and of Theorem~\cite[Thm.~2]{G}, in positive characteristic.\footnote{This also holds without the assumption that $\mathbf{G}$ is split.}
In particular, if $U$ is a maximal unipotent subgroup of $\mathbf{G}(F)$, then $U=\mathbf{U}_F(F)$.
Since $\mathbf{U}_F(F)$ is solvable, it is contained in the group $\mathbf{B}_F(F)$, for a Borel subgroup $\mathbf{B}_F$ of $\mathbf{G}_F$.\footnote{This also holds when $\mathbf{G}$ is assumed quasi-split instead of split.}
Moreover, the group $\mathbf{U}_F(F)$ is the unipotent radical of $\mathbf{B}_F(F)$.
Since the Borel subgroups of $\mathbf{G}_F$ are $\mathbf{G}(F)$-conjugates, we conclude that all the maximal unipotent (abstract) subgroups of $\mathbf{G}(F)$ are $\mathbf{G}(F)$-conjugates.

When $R$ is an arbitrary commutative ring, maximal unipotent subgroups are not always $\mathbf{G}(R)$-conjugate.
Moreover, in the context of $\mathcal{S}$-arithmetic subgroups, Corollary~\ref{coro number of max unip sub for GA} will provide a family of counter-examples, depending on the Picard group $\mathrm{Pic}(\OS)$.
Nevertheless, when the arithmetic properties of the ring $\OS$ are close to that of a field, e.g., when $\OS$ is a principal ideal domain, one will observe that the maximal unipotent subgroups are $\mathbf{G}(\OS)$-conjugates (cf.~Corollary~\ref{coro max unip sub A principal}).

Assuming that $\mathbb{F}$ is a finite field of characteristic $p$, the problem of classify the maximal unipotent subgroups of an $\mathcal{S}$-arithmetic group $G$ is analogous to the problem of classifying $p$-Sylow subgroups of finite groups.
Indeed, in this context any $p^r$-torsion element $g \in G$ is unipotent since $g^{p^k}=\mathrm{id}$ exactly when $(g-\mathrm{id})^{p^k}=0$. 
Thus, if $g \in G$ has a $p$-power order, then Lemma~\ref{lemma unipotents in G} together with Theorem~\ref{main teo 1} implies that $g$ is contained in a maximal unipotent subgroup.
Conversely, any unipotent element of $G$ has a $p$-power order.
Therefore, we conclude that the maximal unipotent subgroup of $G$ are exactly the maximal subgroups containing torsion elements of $p$-power order: these are the ``$\mathcal{S}$-arithmetical'' analogous of $p$-Sylow subgroups of finite groups.

This work is devoted to understanding the conjugacy classes of maximal unipotent subgroups contained in any $\mathcal{S}$-arithmetic subgroup $G$ of $\mathbf{G}(k)$.
As we mentioned above, for simply connected semisimple groups over $F$, there is a unique conjugacy class of maximal unipotent subgroups. This is a consequence of the $\mathbf{G}(F)$-conjugacy of Borel subgroups.
This $\mathbf{G}(F)$-conjugacy result corresponds to the $\mathbf{G}(F)$-transitive action on chambers of the spherical Tits building associated to $(\mathbf{G},F)$ \cite[Thm.~6.56]{AB}. 
Using this combinatorical interpretation,
in \S~\ref{section maximal unipotents} we describe the maximal unipotent subgroups of $G$, as well as their $G$-conjugacy classes, in terms of certain unipotent subgroups $U(G_D)$ of the $G$-stabilizers $G_D$ of chambers $\D$ of the spherical building defined from $(\mathbf{G},k)$.
These properties are summarized in Theorem~\ref{main teo 1}.
Theorem~\ref{main teo 3} provides a parametrization of the $G$-conjugacy classes of maximal unipotent subgroups of $G$ in terms of the \'Etale cohomology, as developed in \S~\ref{section prove of 2.2}.

In some cases (cf.~\cite{MS1},~\cite{MS2},~\cite[Ch.~II, \S~2.5]{S},~\cite[\S 4 (d)]{Be} and~\cite[\S 3]{Stuhler}), the groups $G_D$ (or $G$-stabilizers of simplices in the affine building which are related to $G_D$) have been directly computed.
So, it appears to be more natural to compute directly the $G_D$ than the $U(G_D)$.
Then, in order to understand the groups $U(G_D)$, in \S~\ref{section fixing} and \S~\ref{section proof of 2.3}, we compare each $G_D$ with its (unique) maximal unipotent subgroup $U(G_D)$.
Indeed, in Theorem~\ref{main teo 2}, we exhibit a diagonalizable subgroup of $\mathbf{G}(k)$ that is isomorphic to $G_D/U(G_D)$.
This diagonalizable group decomposes as the direct product of an ``arithmetic bounded torus'' $T$ with a finitely generated free $\mathbb{Z}$-module.
When $\mathbb{F}$ is finite, the group $T$ is finite.
Therefore, the preceding result describes the torsion and the free part of the abelian group $G_D/U(G_D)$, which is finitely generated in this case.

Assuming that $\mathbb{F}$ is finite, in \S~\ref{section examples}, we apply Theorems~\ref{main teo 1}, \ref{main teo 3} and \ref{main teo 2} to principal congruence subgroup $\Gamma$ of $\mathbf{G}(\OS)$, which are arithmetic subgroups of $\mathbf{G}(k)$ whose torsion is $p$-primary.
More precisely, we show that, when $\mathcal{S} = \{ \p \}$, the maximal unipotent subgroups of $\Gamma$ are exactly the stabilizers in $\Gamma$ of chambers of the spherical building defined from $(\mathbf{G},k)$.
We also characterize the preceding unipotent subgroups when $\OS$ is a principal ideal domain or when $\mathbf{G}=\mathrm{SL}_n$ (See Corollary~\ref{corollary principal cong sub and principal ideal domains} and Example~\ref{ex max unip sln}).

%Moreover, it also gives that all the preceding unipotent subgroups are $\mathbf{G}(\OS)$-conjugate, when $\OS$ is principal.
%A counting of the conjugacy classes of the aforementioned unipotent subgroups is also given in \S~\ref{section examples}.

Consider a (non necessarily split) semisimple group $\mathbf{G}$ of rank $1$ defined over a local field $K$.
Since the group of rational points $\mathbf{G}(K)$ of $\mathbf{G}$ is a locally compact unimodular group, it has a Haar measure $\mu$ which is $\mathbf{G}(K)$-invariant.
A lattice $\Lambda$ of $\mathbf{G}(K)$ is a subgroup of finite $\mu$-covolume, i.e., $\mu(\mathbf{G}(K)/\Lambda)$ is finite. 
In~\cite[Thm.~2.3]{Ba03}, Baumgartner characterizes the maximal unipotent subgroups of any lattice $\Lambda$ of $\mathbf{G}(K)$.

Assume that $\mathbb{F}$ is finite and that $\mathbf{G}$ has rank $1$ (i.e., $\mathbf{G}=\mathrm{SL}_2$ since we assume that $\mathbf{G}$ is split simply connected and semisimple).
It follows from~\cite[Ch. II, \S 2.9, Ex.~2, Pag. 110]{S} that $\mathbf{G}(\mathcal{O}_{\lbrace \p \rbrace})$ is a lattice of $\mathbf{G}(k_{ \p })$, where $k_P$ is the completion of $k$ at $P$.
Since $\mathbf{G}(\mathcal{O}_{\lbrace \p \rbrace}) \subseteq \mathbf{G}(\OS)$, when $\p \in \mathcal{S}$, the group $\mathbf{G}(\OS)$ is also a lattice of $\mathbf{G}(k_{ \p })$.
In particular, any $\mathcal{S}$-arithmetic subgroup $G$ of $\mathbf{G}(k_{ \p })$ is a lattice of $\mathbf{G}(k_{ \p })$.
Thus, the results of Baumgartner describe the maximal unipotent subgroups contained in $G$ when $\mathbb{F}$ is finite and $\mathbf{G}$ has rank $1$.
In particular, Theorems~\ref{main teo 1}, \ref{main teo 3} and~\ref{main teo 2} extend the aforementioned results due to Baumgartner to the context of (non necessity finite) perfect fields and algebraic $k$-groups of higher rank.

Still in the context of finite fields, Serre proves in~\cite[Ch. II, \S~2.9]{S} that the maximal unipotent subgroups contained in finite index subgroups $G$ of $\mathrm{SL}_2(\mathcal{O}_{\lbrace \p\rbrace})$, whose torsion is $p$-primary, are exactly the stabilizers of chambers of the spherical building of $(\mathrm{SL}_2,k)$.
In other words, Serre proves that the aforementioned unipotent subgroups are the $G$-stabilizers defined from the action of $G$ on $\mathbb{P}^1(k)$ by Moebius transformations.
In the same work, Serre characterizes the homology of $G$ modulo a representative system of the conjugacy classes of its maximal unipotent subgroups in terms of the Euler-Poincar\'e characteristic of $G$.
These results are summarized in~\cite[Thm. 14, Ch. II, \S 2.9]{S}. 
In~\cite[\S 3, Pag. 155]{Se2}, Serre conjectures that these results can be extended to the context where $\mathbf{G}$ has arbitrary rank.

In the following, we describe the groups involved in the preceding conjecture due to Serre.
For simplicity, this study is limited to split groups because they admit a Chevalley pinning defined over $\mathbb{Z}$.
In particular, points of tori and root groups naturally make sense over arbitrary commutative rings.
An investigation for the quasi-split non-split group $\mathrm{SU}_3$ is developed in~\cite{CB}.
Since $\mathrm{SU}_3$ and split groups encode the behaviour of all the possible Borel subgroups of quasi-split reductive groups, it is expectable that one can obtain results in the general case from these two cases.

%\sout{In the method we follow, it appears crucial to restrict to simply connected semisimple groups.
%Indeed, this hypothesis allows us to apply results on injections of unipotent groups such as} \cite[Corollary 3.7]{BoTi2} and Theorem~\cite[Theorem 2]{G}.
%\sout{It also allows us to obtain} \sout{certain} \green{various} \sout{identifications using étale cohomology due to the triviality of certain étale cohomological sets}.
%\sout{More precisely, with the method followed in this work, we cannot obtain a parametrization of conjugacy classes of maximal unipotent subgroups when $\mathbf{G}$ is not simply connected}.

\section{The Bruhat-Tits building and the action of \texorpdfstring{$\mathbf{G}(k)$}{G(k)}}\label{section preliminaries}

\subsection{Recollections on algebraic groups and Bruhat-Tits buildings}\label{section recall alg groups and BT}

In the following, we consider a given Killing couple $(\mathbf{T},\mathbf{B})$ of $\mathbf{G}$ defined over $\mathbb{Z}$, which is a couple consisting in a Borel subgroup $\mathbf{B}$ and a maximal torus $\mathbf{T}$ contained in $\mathbf{B}$.
Such a Killing couple exists according to~\cite[Exp.~XXII, Def.~5.3.13]{SGA3-3} since $\mathbf{G}$ is assumed to be split.
The Borel subgroup $\mathbf{B}$ defines a subset of positive roots $\Phi^+=\Phi(\mathbf{T},\mathbf{B})$ of the set of root $\Phi(\mathbf{T},\mathbf{G})$ as in~\cite[Thm.~14.8 and \S 21.8]{BoA}.
This induces a basis of simple roots $\Delta = \Delta(\mathbf{B})$~\cite[VI.1.6]{Bourbaki} of $\mathbf{G}$ relatively to the Borel $k$-subgroup $\mathbf{B}$.
For any $\alpha \in \Phi$, let $\mathbf{U}_{\alpha}$ be the $\mathbf{T}$-stable unipotent subgroup of $\mathbf{G}$ defined from this.
Since $\mathbf{G}$ is assumed to be split, it admits a Chevalley pinning as defined in \cite[Exp.~XXIII, Def.~1.1]{SGA3-3}.
In the sequel, we denote by $\theta_{\alpha}: \mathbb{G}_{a} \rightarrow \mathbf{U}_{\alpha}$ the $\mathbb{Z}$-isomorphism given by the Chevalley pinning.
We denote by $\mathbf{U}^{+}$ the subgroup of $\mathbf{G}$ generated by the $\mathbf{U}_{\alpha}$ for $\alpha \in \Phi^{+}$.
The group $\mathbf{U}^+$ is the unipotent radical of $\mathbf{B}$ according to~\cite[Exp.~XXVI, Prop.~1.12(i)]{SGA3-3}.

For each $\p \in \mathcal{S}$,
let $\nu_{\p}:k \to \mathbb{Z} \cup \lbrace \infty \rbrace$ be the valuation induced by $\p$, and let $k_{\p}$ be the completion of $k$ with respect to $\nu_{\p}$.
We denote by $\mathcal{O}_{\p}$ the ring of integers of the complete valued field $k_{\p}$,
that is $\mathcal{O}_{\p}=\lbrace x \in k_{\p}: \nu_{\p}(x) \geq 0 \rbrace$.

The datum of $\mathbf{T}$ and the root groups $\mathbf{U}_\alpha$, together with the valuation $\nu_\p$, induce a root group datum on $\mathbf{G}(k_\p)$ according to~\cite[\S 6.2.3(b)]{BT1}.
It follows from~\cite[\S 7.4.2]{BT1} that this datum defines an affine building $X_{\p}=X(\mathbf{G}, k, {\p})$, called the Bruhat-Tits building of the $k$-group $\mathbf{G}_k$ over the valued field $(k, \nu_{\p})$, together with an apartment $\mathbb{A}_{0,\p}$ of $X_\p$.
This apartment $\mathbb{A}_{0,\p}$ is a Euclidean space over a vector space $V_{0,\p}$.
We denote by $D_{0,\p} \subset V_{0,\p}$ the vector chamber in $V_{0,\p}$
associated to $(\mathbf{T},\mathbf{B})$, which can be defined by the formula $D_{0,\p}= \{ x \in V_{0,\p} : \alpha(x) > 0, \forall \alpha \in \Delta \}$ as in~\cite[\S 1.3.12]{BT1}.
Since $\mathbf{G}_k$ is semisimple and simply connected, it follows from \cite[\S 6.4.16(b)]{BT1} that the pointwise stabilizer of $\mathbb{A}_{0,\p}$ is $\mathbf{T}(\mathcal{O}_\p)$.
Moreover, since $\mathbf{G}_{k}$ is split, semisimple and simply connected, it follows from~\cite[\S 4.6.31 \& \S 4.6.32]{BT2} that there exists a special vertex $x_{0,\p} \in X_\p$ such that its stabilizer in $\mathbf{G}(k_\p)$ is $\mathbf{G}(\mathcal{O}_\p)$.
In fact, since $\mathbf{T}(\mathcal{O}_\p) \subset \mathbf{G}(\mathcal{O}_\p)$, the vertex $x_{0,\p}$ belongs to $\mathbb{A}_{0,\p}$~\cite[\S 7.4.10]{BT1}.

Any apartment $\mathbb{A}$ of $X_\p$ is endowed with an affine Coxeter complex structure associated to $\Phi$.
It induces a spherical Coxeter complex structure on the underlying vector space $V$ of $\mathbb{A}$. 
The vector chambers of those $V$ are called the vector chambers of $X_\p$.

\subsection{The diagonal action of $\mathbf{G}(k)$}\label{section diagonal action}

We denote by $\X$ the direct product of the buildings $X_\p$, for $\p \in \mathcal{S}$.
It is a Euclidean building of type $\Phi^{\mathcal{S}}$, as a finite product of Euclidean buildings all of type $\Phi$.
The abstract group $\widehat{G}_{\mathcal{S}}:=\prod_{\p \in \mathcal{S}} \mathbf{G}(k_{\p})$ acts on $X_\mathcal{S}$ via:
\[(g_{\p})_{\p \in \mathcal{S}} \cdot (x_{\p})_{\p \in \mathcal{S}} =(g_{\p} \cdot x_{\p})_{\p \in \mathcal{S}}, \quad \forall (g_{\p})_{\p \in \mathcal{S}} \in \hat{G}_{\mathcal{S}}, \, \, \forall (x_{\p})_{\p \in \mathcal{S}} \in \X.\]
By definition, an apartment (resp. a vector chamber) of $\X$ is a product of apartments (resp. vector chambers) of the buildings $X_\p$, for $\p \in \mathcal{S}$.
Recall that $\mathbf{G}(k_\p)$ acts strongly transitively on the spherical building structure associated to $X_\p$ according to \cite[\S 2.2.6]{BT1},
i.e., $\mathbf{G}(k_\p)$ acts transitively on the set of pairs $(\mathbb{A},D)$ consisting of an apartment $\mathbb{A} \subset X_{\p}$ and a vector chamber $D \subset \mathbb{A}$.
This transitivity property extends to the product action of $\widehat{G}_\mathcal{S}$ on $\X$ seen as a product of buildings.

We define an apartment of $\X$ by setting $\mathbb{A}_0 := \prod_{\p \in \mathcal{S}} \mathbb{A}_{0,\p}$.
Since $\widehat{G}_\mathcal{S}$ acts transitively on the set of apartments of $\X$,
the apartments of $\X$ are the
$g \cdot \mathbb{A}_0$, for $g \in \widehat{G}_\mathcal{S}$.
Hence, the group $\widehat{G}_\mathcal{S}$ acts transitively on the vector chambers of $\X$, that are the vector chambers of the vector spaces associated to its apartments.

The conical cell $D_0 = \prod_{\p \in \mathcal{S}} D_{0,\p}$ is a vector chamber of the underlying vector space $V_0$ of $\mathbb{A}_0$.
By transitivity of $\widehat{G}_\mathcal{S}$,
the vector chambers of $\X$ are
the $D = g \cdot D_0 \subset g \cdot V_0$, for $g \in \widehat{G}_\mathcal{S}$.
For any $x \in \mathbb{A}_0$, the subset $Q(x,D_0) = x+D_0 \subset \mathbb{A}_0$ is a sector chamber as a product of sector chambers on each component.
As in~\cite[\S 7.4.12]{BT1}, we define the sector chambers of $X$ as the subsets $g \cdot Q(x,D_0) \subset g \cdot \mathbb{A}_0$, for $g \in \widehat{G}_\mathcal{S}$ and $x \in \mathbb{A}_0$.

We fix a point $x_0 = (x_{0,\p})_{\p \in \mathcal{S}}$ of the apartment $\mathbb{A}_0$.
It is a special vertex since so are the $x_{0,\p} \in \mathbb{A}_{0,\p}$, for all $\p \in \mathcal{S}$.

Since $\mathbf{G}(k)$ embeds in $\hat{G}_{\mathcal{S}}$ via $g \mapsto (g)_{\p \in \mathcal{S}}$,
the group $\mathbf{G}(k)$ acts diagonally on $\X$ via:
\[g \cdot (x_{\p})_{\p \in \mathcal{S}} =(g \cdot x_{\p})_{\p \in \mathcal{S}},\]
for $(x_{\p})_{\p \in \mathcal{S}} \in \X$ and $g \in \mathbf{G}(k)$.
%\red{B Useless?
%By construction, the stabilizer of $x_0$ in $\mathbf{G}(\OS)$ is the intersection in $\mathbf{G}(\OS)$ of the subgroups $\mathbf{G}(\mathcal{O}_\p) \cap \mathbf{G}(\OS) \subset \mathbf{G}(k_\p)$.
%It clearly contains $\mathbf{G}(\mathbb{F})$.}

\subsection{Rational chambers at infinity}\label{section rational chambers}

We denote by $\partial_\infty \X$ the spherical building at infinity of $\X$ as defined in~\cite[\S 11.8]{AB}.
It consists of parallelism classes of geodesical rays of $\X$.
Its apartments (resp. chambers) are in one-to-one correspondence with the apartments (resp. vector chambers) of $\X$, as proved in \cite[Lemma 11.75 \& Thm. 11.79]{AB}.
Since $\widehat{G}_\mathcal{S}$ acts simplicially and by isometries on $\X$, it sends geodesical rays onto geodesical rays.
Thus, it induces an action of $\widehat{G}_\mathcal{S}$ on $\partial_\infty \X$.
We denote by $\partial_\infty \mathbb{A}$ (resp. $\partial_\infty D$) the image in $\partial_\infty \X$ of an apartment $\mathbb{A}$ (resp. a vector chamber $D$).
Since $\widehat{G}_{\mathcal{S}}$ acts strongly transitively on $\X$, it also acts strongly transitively on $\partial_\infty \X$.

\begin{lemma}\label{lemma stab of D_0}
The stabilizer in $\mathbf{G}(k)$ of $\D_0$ is $\mathbf{B}(k)$.
\end{lemma}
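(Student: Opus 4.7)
The plan is to reduce the statement to the componentwise factors of the product building $\X$ and then invoke the classical identification of Borel subgroups as stabilizers of chambers at infinity.

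First I would use the product structure set up in Section~\ref{section diagonal action}: since the action of $\widehat{G}_\mathcal{S}$ on $\X = \prod_{\p \in \mathcal{S}} X_\p$ is componentwise and preserves geodesical rays, the chamber $\D_0$ of $\partial_\infty \X$ is the join of the chambers $\partial_\infty D_{0,\p}$ of $\partial_\infty X_\p$, and consequently
\[
\stab_{\widehat{G}_\mathcal{S}}(\D_0) = \prod_{\p \in \mathcal{S}} \stab_{\mathbf{G}(k_\p)}\bigl(\partial_\infty D_{0,\p}\bigr).
\]

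Next, for each $\p \in \mathcal{S}$ I would identify the factor $\stab_{\mathbf{G}(k_\p)}(\partial_\infty D_{0,\p})$ with $\mathbf{B}(k_\p)$. By the canonical $\mathbf{G}(k_\p)$-equivariant isomorphism between the building $\partial_\infty X_\p$ at infinity and the spherical Tits building of $\mathbf{G}_{k_\p}$, the chamber $\partial_\infty D_{0,\p}$ corresponds to the Borel subgroup containing $\mathbf{T}$ whose unipotent radical is generated by the root subgroups $\mathbf{U}_\alpha$ for $\alpha \in \Phi^+$. By the construction of the Chevalley pinning recalled in Section~\ref{section recall alg groups and BT}, this Borel is precisely $\mathbf{B}_{k_\p}$, so its group of $k_\p$-points is $\mathbf{B}(k_\p)$.

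Finally, intersecting with the diagonally embedded $\mathbf{G}(k) \subset \widehat{G}_\mathcal{S}$ gives
\[
\stab_{\mathbf{G}(k)}(\D_0) = \mathbf{G}(k) \cap \prod_{\p \in \mathcal{S}} \mathbf{B}(k_\p) = \mathbf{B}(k),
\]
the last equality holding because $\mathbf{B}$ is a closed $\mathbb{Z}$-subgroup scheme of $\mathbf{G}$, so that $\mathbf{G}(k) \cap \mathbf{B}(k_\p) = \mathbf{B}(k)$ for any single place $\p$. The one delicate input is the identification of $\partial_\infty X_\p$ with the spherical Tits building of $\mathbf{G}_{k_\p}$ together with the matching of $\partial_\infty D_{0,\p}$ with the chosen Borel; this is standard in Bruhat--Tits theory but should be cited explicitly.
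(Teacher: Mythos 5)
Your proposal is correct and follows essentially the same route as the paper: reduce to the componentwise stabilizers $\stab_{\mathbf{G}(k_\p)}(\partial_\infty D_{0,\p}) = \mathbf{B}(k_\p)$, take the product over $\p \in \mathcal{S}$, and intersect with the diagonally embedded $\mathbf{G}(k)$ using the closedness of $\mathbf{B}$ in $\mathbf{G}$. The only difference is that you justify the local identification via the spherical Tits building at infinity, a step the paper simply asserts, so your version is if anything slightly more complete.
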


\begin{proof}
For each $\p \in \mathcal{S}$, the stabilizer of $\D_{0,\p}$ in $\mathbf{G}(k_\p)$ is $\mathbf{B}(k_\p)$.
Hence the stabilizer of $\D_0$ in $\widehat{G}_\mathcal{S}$ is $\prod_{\p \in \mathcal{S}} \mathbf{B}(k_\p)$.
Thus, by diagonal action, the stabilizer of $\D_0$ in $\mathbf{G}(k)$ is the intersection of the groups $\mathbf{B}(k_\p) \cap \mathbf{G}(k) \subset \mathbf{G}(k_\p)$, for $\p \in \mathcal{S}$.
Since the algebraic group $\mathbf{B}$ is closed in $\mathbf{G}$, we have that $\mathbf{B}(k_\p) \cap \mathbf{G}(k) = \mathbf{B}(k)$, for each $\p \in \mathcal{S}$.
\end{proof}

We denote by $\SecD$ the $\mathbf{G}(k)$-orbit of $\D_0$ in the set of chambers of $\partial_{\infty} \X$.
An element of $\SecD$ is called a $k$-rational chamber at infinity of $\X$.
Since $\mathrm{Stab}_{\mathbf{G}(k)}(\D_0)$ equals $\mathbf{B}(k)$, there exists a $\mathbf{G}(k)$-equivariant one-to-one correspondence between $\SecD$ and the Borel variety $\mathbf{G}(k)/\mathbf{B}(k)$.
Thus, the set $\SecD$ corresponds to the set of chambers in the usual spherical building defined from $(\mathbf{G},k)$.

\section{Main results}\label{section max unip sub}

Recall that $\mathbf{G}_k$ denotes a split simply connected semisimple $k$-group, and $\mathbf{G}$ denotes a $\mathbb{Z}$-model of this algebraic $k$-group, i.e., a semisimple $\mathbb{Z}$-group such that $\mathbf{G} \otimes_{\mathbb{Z}} k \cong \mathbf{G}_k$.
The group $\mathbf{B}$ is a Borel subgroup of $\mathbf{G}$, whose unipotent radical is $\mathbf{U}^+$.
Recall also that the base field $\mathbb{F}$ is assumed to be perfect.
The main results of this work is the following theorem that describes maximal unipotent subgroups of an $\mathcal{S}$-arithmetic subgroup $G \subset \mathbf{G}(k)$ in terms of its action on $\SecD$.

\begin{theorem}\label{main teo 1}
Let $G$ be an $\mathcal{S}$-arithmetic subgroup of $\mathbf{G}(k)$.
For each $\D \in \SecD$, let $h_D \in \mathbf{G}(k)$ be an arbitrary element such that $h_D \cdot \D= \D_0$. Let us write:
\begin{equation}\label{eq G_s and borel}
G_{D}= \mathrm{Stab}_{G}(\D) = h^{-1}_{D} \mathbf{B}(k) h_{D} \cap G, \quad \text{ and } \quad U(G_{D}) :=h^{-1}_{D} \mathbf{U}^{+}(k) h_{D} \cap G.
\end{equation}
Then:
\begin{itemize}
\item[(1)] $U(G_{D})$ is the subgroup consisting in all the unipotent elements in $G_{D}$, and
\item[(2)] $\mathfrak{U}:=\lbrace U(G_{D}) : \D \in \SecD \rbrace$ is the set all the maximal unipotent subgroups of $G$.
\end{itemize}

Fix a set $\lbrace \D_{\sigma}: \sigma \in \Sigma \rbrace$ of representatives of the $G$-orbits of $\SecD$, and write $h_{\sigma}:=h_{D_{\sigma}}$, $G_{\sigma}=G_{D_{\sigma}}$ and $U(G_{\sigma})=U(G_{D_{\sigma}})$.
Then:
\begin{enumerate}
\item[(3)] $\mathfrak{U}/G:=\lbrace U(G_{\sigma}) : \sigma \in \Sigma \rbrace$ is a set of representatives of the conjugacy classes of maximal unipotent subgroups of $G$.
\end{enumerate}
\end{theorem}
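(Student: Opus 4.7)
First, I would verify that $G_\D$ and $U(G_\D)$ are well-defined. Since $h_D \cdot \D = \D_0$, conjugating Lemma~\ref{lemma stab of D_0} by $h_D$ gives $\mathrm{Stab}_{\mathbf{G}(k)}(\D) = h_D^{-1} \mathbf{B}(k) h_D$, which yields the stabilizer identity for $G_\D$. Any two admissible choices $h_D, h_D'$ differ by left multiplication by an element of $\mathbf{B}(k)$, which normalizes $\mathbf{U}^+(k)$, so $U(G_\D)$ is independent of the choice. Writing $\mathbf{B}_\D := h_D^{-1} \mathbf{B} h_D$ and $\mathbf{U}_\D := h_D^{-1} \mathbf{U}^+ h_D$, the standard structure theorem for connected solvable linear algebraic groups --- stating that the unipotent elements of $\mathbf{B}_\D(\bar{k})$ form exactly $\mathbf{U}_\D(\bar{k})$ --- together with the fact that $\mathbf{U}_\D$ is a $k$-subscheme, identifies the unipotent elements of $G_\D = \mathbf{B}_\D(k) \cap G$ with $\mathbf{U}_\D(k) \cap G = U(G_\D)$. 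This gives (1).

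For (2), I would argue both inclusions. Given a maximal unipotent subgroup $U$ of $G$, the Borel--Tits and Gille theorems recalled in the introduction place $U$ inside the unipotent radical of a Borel subgroup of $\mathbf{G}_k$; via the $\mathbf{G}(k)$-equivariant bijection between Borels and $\SecD$ established in \S\ref{section rational chambers}, this radical is some $\mathbf{U}_\D$, so $U \subseteq U(G_\D)$ and maximality forces equality. Conversely, each $U(G_\D)$ is itself maximal unipotent in $G$: if $U(G_\D) \subseteq V$ with $V$ unipotent in $G$, then the same argument applied to a maximal unipotent of $G$ containing $V$ yields $\D' \in \SecD$ with $V \subseteq U(G_{\D'})$, so $U(G_\D) \subseteq \mathbf{U}_{\D'}(k)$. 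Taking Zariski closures and using the density lemma below gives $\mathbf{U}_\D \subseteq \mathbf{U}_{\D'}$; equality of dimensions of maximal unipotent subgroups of $\mathbf{G}_k$ forces $\mathbf{U}_\D = \mathbf{U}_{\D'}$, hence $\D = \D'$ via the Borel--chamber bijection and $V = U(G_\D)$.

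The main obstacle is the density lemma: each $U(G_\D)$ is Zariski-dense in $\mathbf{U}_\D$. To prove it, fix a closed $\mathbb{Z}$-embedding $\mathbf{G} \hookrightarrow \mathrm{GL}_{n,\mathbb{Z}}$ (\cite[1.4.5]{BT2}) and use the big cell $\mathbf{U}^+ \cong \prod_{\alpha \in \Phi^+} \mathbf{U}_\alpha$ together with the Chevalley parametrizations $\theta_\alpha$. The set of parameters $(t_\alpha) \in k^{|\Phi^+|}$ for which $h_D^{-1} \prod_\alpha \theta_\alpha(t_\alpha) h_D$ is $\OS$-integral contains a non-zero $\OS$-submodule of $k^{|\Phi^+|}$ (obtained by clearing the denominators introduced by $h_D$), which is Zariski-dense in $k^{|\Phi^+|}$ because $\OS$ is infinite. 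Hence the subgroup generated inside $\mathbf{U}_\D(k) \cap \mathbf{G}(\OS)$ is Zariski-dense in $\mathbf{U}_\D$; its intersection with $G$ is a finite-index subgroup of $U(G_\D)$ (by commensurability of $G$ with $\mathbf{G}(\OS)$), and a finite-union-of-translates argument relying on connectedness of $\mathbf{U}_\D$ upgrades density from this finite-index subgroup to $U(G_\D)$ itself.

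Finally, for (3), a direct computation using the relation $h_{g \cdot \D}\, g = b\, h_D$ for some $b \in \mathbf{B}(k)$ (obtained by comparing $h_{g \cdot \D} g \cdot \D = \D_0 = h_D \cdot \D$ with Lemma~\ref{lemma stab of D_0}) together with the normalization of $\mathbf{U}^+(k)$ by $\mathbf{B}(k)$ yields $U(G_{g \cdot \D}) = g\, U(G_\D)\, g^{-1}$ for every $g \in G$. Thus $G$-equivalent chambers produce $G$-conjugate maximal unipotent subgroups, and combining with (2) every conjugacy class is represented by some $U(G_\sigma)$. Conversely, if $g\, U(G_\D)\, g^{-1} = U(G_{\D'})$ with $g \in G$, then $U(G_{g \cdot \D}) = U(G_{\D'})$; the density lemma lets one recover $\mathbf{U}_\D$ as the Zariski closure of $U(G_\D)$, then $\mathbf{B}_\D$ as its normalizer, and finally $\D$ via the Borel--chamber bijection, so $g \cdot \D = \D'$ and distinct elements of $\Sigma$ give non-conjugate subgroups.
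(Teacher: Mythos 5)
Your proof is correct, but the mechanism you use for the crucial uniqueness/maximality step is genuinely different from the paper's. Where you prove a Zariski-density lemma --- $U(G_D)$ is dense in $h_D^{-1}\mathbf{U}^+h_D$ because the admissible Chevalley parameters contain a product of nonzero ideals of $\OS$ --- and then conclude by taking closures, comparing dimensions, and using that $\mathbf{B}$ is the normalizer of $\mathbf{U}^+$ in $\mathbf{G}$, the paper instead proves directly (Lemma~\ref{lemma cont of unipotents}) that an inclusion $h_1^{-1}\mathbf{U}^+(k)h_1\cap G\subseteq h_2^{-1}\mathbf{U}^+(k)h_2\cap G$ already forces $h_2h_1^{-1}\in\mathbf{B}(k)$: it writes $h_2h_1^{-1}=b'wb$ via the Bruhat decomposition and shows that $w\neq\mathrm{id}$ would make a nonzero polynomial vanish on the infinite product $\prod_{\alpha}N_\alpha(h_1)$. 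Both routes rest on the same arithmetic input, namely that the sets of parameters $N_\alpha(h)$ contain nonzero $\OS$-ideals (the paper quotes this from an earlier work for Lemma~\ref{Lemma M_a}; you rederive it by clearing denominators in a faithful $\mathbb{Z}$-representation, which is legitimate in the split setting), and both feed the resulting rigidity into part~(3) in the same way. What each buys: the paper's lemma stays entirely at the level of rational points, avoids any appeal to Zariski closures, and is stated in a reusable form; your argument is the more standard ``arithmetic subgroups of unipotent groups are Zariski dense'' principle and delegates the rigidity to the algebraic-group facts that maximal unipotent subgroups of $\mathbf{G}_k$ have equal dimension and that $N_{\mathbf{G}}(\mathbf{U}^+)=\mathbf{B}$ --- a fact the paper does not state explicitly but which is standard. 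Two minor remarks: in part~(2) you need not invoke a maximal unipotent subgroup containing $V$ (which tacitly uses Zorn's lemma); Borel--Tits/Gille applies to $V$ directly, as in the paper. Your part~(1) is the same semidirect-product argument the paper uses.
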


The isomorphism classes of line bundles on $\mathrm{Spec}(\OS)$ (resp. $\mathcal{C}$) form a group, with the tensor product as composition law, which is called the Picard group and denoted by $\mathrm{Pic}(\OS)$ (resp. $\mathrm{Pic}(\mathcal{C})$).
Since $\mathcal{C}$ has dimension one, the group
$\mathrm{Pic}(\OS)$ coincides with the ideal class group $\mathrm{Cl}(\OS)$ of the Dedekind domain $\OS$.
Moreover, if $\overline{\mathcal{S}}$ is the image of $\mathcal{S}$ in $\mathrm{Pic}(\mathcal{C})$, then, we have $\mathrm{Pic}(\OS) \cong \mathrm{Pic}(\mathcal{C}) / \langle \overline{ \mathcal{S} } \rangle$.
Using \'Etale cohomology, a numbering of the maximal unipotent subgroups described in Theorem~\ref{main teo 1} can be done for $\mathbf{G}(\mathcal{O}_{\mathcal{S}})$ in terms of $\mathrm{Pic}(\OS)$ and of the rank $\mathbf{t}=\mathrm{rk}(\mathbf{G})$ of $\mathbf{G}$, which is the dimension of $\mathbf{T}$. 
This also has interesting consequences on the $\mathcal{S}$-arithmetic subgroups $G$ of $\mathbf{G}(k)$. For instance, this implies that the number of $G$-conjugacy classes of maximal unipotent subgroup is finite whenever $\mathbb{F}$ is finite.

\begin{theorem}\label{main teo 3}
There exists a bijection between the set of $\mathbf{G}(\mathcal{O}_\mathcal{S})$-orbits in $\SecD$ and $\mathrm{Pic}(\mathcal{O}_{\mathcal{S}})^{\mathbf{t}}$.
\end{theorem}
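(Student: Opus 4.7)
The plan is to translate the geometric orbit counting problem into an étale-cohomology computation, using the Borel subgroup $\mathbf{B}$ and its Levi quotient $\mathbf{T}$ as auxiliary stabilizers. By Lemma~\ref{lemma stab of D_0} together with Section~\ref{section rational chambers}, $\SecD$ is $\mathbf{G}(k)$-equivariantly identified with $\mathbf{G}(k)/\mathbf{B}(k)$, so the set to be counted is $\mathbf{G}(\OS) \backslash \mathbf{G}(k) / \mathbf{B}(k)$. The first step is to reinterpret this as a set of $\mathbf{G}(\OS)$-orbits on the $\OS$-points of the flag variety $\mathbf{G}/\mathbf{B}$. Since $\mathbf{G}/\mathbf{B}$ is $\mathbb{Z}$-projective, the valuative criterion of properness applied to the Dedekind scheme $\spec(\OS)$ gives $(\mathbf{G}/\mathbf{B})(\OS)=(\mathbf{G}/\mathbf{B})(k)$; and since $\mathbf{G}$ is split, all Borels of $\mathbf{G}_k$ are $\mathbf{G}(k)$-conjugate, so $(\mathbf{G}/\mathbf{B})(k)=\mathbf{G}(k)/\mathbf{B}(k)$. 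This yields a canonical $\mathbf{G}(\OS)$-equivariant bijection $\SecD \cong (\mathbf{G}/\mathbf{B})(\OS)$.

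The second step is to apply the exact sequence of non-abelian étale cohomology attached to the short exact sequence $1 \to \mathbf{B} \to \mathbf{G} \to \mathbf{G}/\mathbf{B} \to 1$ of $\OS$-group schemes:
\[
\mathbf{G}(\OS) \longrightarrow (\mathbf{G}/\mathbf{B})(\OS) \longrightarrow H^{1}_{\mathrm{\acute{e}t}}(\OS,\mathbf{B}) \longrightarrow H^{1}_{\mathrm{\acute{e}t}}(\OS,\mathbf{G}).
\]
This gives a canonical injection of the orbit set $\mathbf{G}(\OS)\backslash(\mathbf{G}/\mathbf{B})(\OS)$ into $H^{1}_{\mathrm{\acute{e}t}}(\OS,\mathbf{B})$, with image equal to the kernel of the last arrow. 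The critical input here is the triviality $H^{1}_{\mathrm{\acute{e}t}}(\OS,\mathbf{G})=1$, which holds for simply connected split semisimple Chevalley $\mathbb{Z}$-group schemes pulled back to the ring of $\mathcal{S}$-integers of a function field (Harder's theorem, extended to perfect residue fields via Nisnevich's purity for reductive groups over regular Dedekind schemes). Consequently, the orbit set is in bijection with the full pointed set $H^{1}_{\mathrm{\acute{e}t}}(\OS,\mathbf{B})$.

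The third step is to compute $H^{1}_{\mathrm{\acute{e}t}}(\OS,\mathbf{B})$. The exact sequence $1 \to \mathbf{U}^{+} \to \mathbf{B} \to \mathbf{T} \to 1$ admits a Levi splitting over $\mathbb{Z}$, and $\mathbf{U}^{+}$ is filtered $\mathbf{T}$-equivariantly by root subgroups isomorphic to $\mathbb{G}_a$. Since $\spec(\OS)$ is affine, $H^{1}_{\mathrm{\acute{e}t}}(\OS,\mathbb{G}_a) = H^{1}_{\mathrm{Zar}}(\spec(\OS),\mathcal{O}) = 0$, and a standard non-abelian dévissage (twisting at each step of the filtration) yields $H^{1}_{\mathrm{\acute{e}t}}(\OS,\mathbf{U}^{+})=1$. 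The Levi splitting then transports this to a bijection of pointed sets $H^{1}_{\mathrm{\acute{e}t}}(\OS,\mathbf{B}) \xrightarrow{\sim} H^{1}_{\mathrm{\acute{e}t}}(\OS,\mathbf{T})$. Since $\mathbf{T}$ is a split torus of rank $\mathbf{t}$, one has $\mathbf{T}\cong\mathbb{G}_m^{\mathbf{t}}$ over $\mathbb{Z}$, and Hilbert 90 applied componentwise gives
\[
H^{1}_{\mathrm{\acute{e}t}}(\OS,\mathbf{T}) \cong H^{1}_{\mathrm{\acute{e}t}}(\OS,\mathbb{G}_m)^{\mathbf{t}} = \mathrm{Pic}(\OS)^{\mathbf{t}}.
\]
Concatenating the bijections of Steps 1--3 produces the desired bijection between $\SecD/\mathbf{G}(\OS)$ and $\mathrm{Pic}(\OS)^{\mathbf{t}}$.

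The main obstacle I expect is securing the vanishing $H^{1}_{\mathrm{\acute{e}t}}(\OS,\mathbf{G})=1$ in our level of generality (where $\mathbb{F}$ is only assumed perfect, not necessarily finite): although the result is standard for Chevalley group schemes, the precise reference usually involves a number-field hypothesis or a finiteness assumption on $\mathbb{F}$, so one must pass through Nisnevich-type purity or an ad hoc reduction. A secondary care is needed for the non-abelian dévissage computing $H^{1}_{\mathrm{\acute{e}t}}(\OS,\mathbf{U}^{+})$, since each step requires twisting the next root subgroup by a previously-constructed cocycle and verifying that the twist remains isomorphic to $\mathbb{G}_a$ (which is automatic here since the twisting is by $\mathbf{T}$-conjugation on an abelian unipotent $\mathbb{Z}$-scheme, but must be spelled out).
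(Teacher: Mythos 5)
Your route is the same as the paper's: identify the orbit set with $\mathbf{G}(\OS)\backslash(\mathbf{G}/\mathbf{B})(\OS)$ using properness of $\mathbf{G}/\mathbf{B}$, run the cohomology sequence of $1\to\mathbf{B}\to\mathbf{G}\to\mathbf{G}/\mathbf{B}\to 1$, identify $H^1_{\text{\'et}}(\OS,\mathbf{B})$ with $H^1_{\text{\'et}}(\OS,\mathbf{T})\cong\operatorname{Pic}(\OS)^{\mathbf{t}}$, and kill the obstruction in $H^1_{\text{\'et}}(\OS,\mathbf{G})$. (The paper quotes~\cite[Exp. XXVI, Cor. 2.3]{SGA3-3} for $H^1(\mathbf{B})=H^1(\mathbf{T})$ instead of redoing the unipotent d\'evissage; that difference is immaterial.)

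The genuine gap is exactly the one you flagged: the vanishing $H^1_{\text{\'et}}(\OS,\mathbf{G})=1$ on which your second step relies is false in the stated generality, where $\mathbb{F}$ is only assumed perfect. Take $\mathbb{F}=\mathbb{R}$, $\mathcal{C}=\mathbb{P}^1_{\mathbb{R}}$, $\mathcal{S}=\{\infty\}$, so that $\OS=\mathbb{R}[t]$, and let $\mathbf{G}$ be the split simply connected group of type $\mathrm{G}_2$; then $H^1_{\text{\'et}}(\mathbb{R}[t],\mathbf{G})$ retracts onto $H^1(\mathbb{R},\mathbf{G})$, which is nontrivial (the compact octonion algebra). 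Nisnevich--Grothendieck--Serre purity cannot repair this: it identifies the \emph{generically trivial} \'etale torsors with the Zariski-locally trivial ones, but does nothing to torsors that are already nontrivial over $k$, and Harder's vanishing of $H^1(k,\mathbf{G})$ for simply connected groups requires $\mathbb{F}$ finite. The paper avoids needing the full vanishing: by Hilbert~90, every class of $H^1_{\text{\'et}}(\OS,\mathbf{T})$ is Zariski-locally trivial, so its image under $\phi:H^1_{\text{\'et}}(\OS,\mathbf{T})\to H^1_{\text{\'et}}(\OS,\mathbf{G})$ lies in $H^1_{\text{Zar}}(\OS,\mathbf{G})$, i.e.\ among the generically trivial $\mathbf{G}$-torsors, and these are trivial for a simply connected semisimple group scheme over a Dedekind ring by~\cite[Th.~2.2.1 and Cor.~2.3.2]{H1}, with no hypothesis on $\mathbb{F}$ beyond what is already assumed. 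Hence $\ker\phi=H^1_{\text{\'et}}(\OS,\mathbf{T})\cong\operatorname{Pic}(\OS)^{\mathbf{t}}$ even though $H^1_{\text{\'et}}(\OS,\mathbf{G})$ itself need not vanish. Substituting this ``only the Zariski-locally trivial classes matter'' argument for your vanishing claim closes the gap; the remainder of your proof is sound.
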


\begin{remark}
If $\mathbf{G}_k$ is a split reductive $k$-group (not necessarily simply connected semisimple),
then there exists a bijection between the set of $\mathbf{G}(\mathcal{O}_\mathcal{S})$-orbits in $\SecD$ and $\ker \left( H^1_{\text{\'et}}(\mathrm{Spec}(\mathcal{O}_{\mathcal{S}}), \mathbf{T}) \to H^1_\text{\'et}(\mathrm{Spec}(\mathcal{O}_{\mathcal{S}}), \mathbf{G}) \right)$. This is proven in \S~\ref{section prove of 2.2}.
\end{remark}

The consequences of Theorem~\ref{main teo 2} on the number of conjugacy classes of maximal unipotent subgroups, which we study in \S~\ref{section consequences}, go further than the previously described results.
Indeed, as we explain in Corollary~\ref{coro max unip sub A principal}, this implies that each unipotent subgroup of $\mathbf{G}(\OS)$ is contained in a $\mathbf{G}(\OS)$-conjugate of the group $\mathbf{U}^{+}(\OS)$ exactly when $\OS$ is a principal ideal domain.
This shows that the description of maximal unipotent subgroups of the arithmetic group $\mathbf{G}(\OS)$ is analogous to the description of those of $\mathbf{G}(k)$, when $\OS$ is a principal ideal domain.

Let $p$ be a prime number.
We say that the torsion of a group $G \subset \mathbf{G}(k)$ is $p$-primary when each finite order element in $G$ has $p$-power order.
For instance, if $\mathbb{F}$ is finite of characteristic $p$, then, Lemma~\ref{lemma torsion elements in Gamma} shows that the torsion of any principal congruence subgroup of $\mathbf{G}(\mathcal{O}_{\mathcal{S}})$ is $p$-primary.
The following result describes the group $G_D/U(G_D)$ as the direct product of an ``arithmetic bounded torus'' $T$ and a finitely generated free $\mathbb{Z}$-module.

\begin{theorem}\label{main teo 2}
Let $G$ be an $\mathcal{S}$-arithmetic subgroup of $\mathbf{G}(k)$.
For each ${\D \in \SecD}$, let $h_D$, $G_D$ and $U(G_{D})$ as in Theorem~\ref{main teo 1}.
Then, for each ${\D \in \SecD}$, there exists $T(G_{D}) \subset h_{D}^{-1}\mathbf{T}(k) h_{D}$ such that:
\begin{itemize}
\item[(1)] $G_{D}$ is an extension of the maximal unipotent group $U(G_{D})$ by $T(G_{D})$.
\item[(2)] $T(G_{D}) \cong T \times \mathbb{Z}^r$, where $T$ is commensurable with a subgroup of $\mathbf{T}(\mathbb{F})$, and $r=r(\mathbf{G}, \mathcal{S},G,D)$ is less than or equal to $ \mathbf{t} \cdot \sharp \mathcal{S}$.
\item[(3)] Moreover, if $G \subseteq \mathbf{G}(\OS)$, then $T \subseteq \mathbf{T}(\mathbb{F})$,
\item[(4)] if $\mathcal{S}=\lbrace \p \rbrace$ and $G \subseteq \mathbf{G}(\OS)$, then $r=0$, and
\item[(5)] if $\mathcal{S}=\lbrace \p \rbrace$, $G \subseteq \mathbf{G}(\OS)$, $\mathbb{F}$ is finite of characteristic $p$ and the torsion of $G$ is $p$-primary, then $T(G_D)=\lbrace \mathrm{id} \rbrace$. In other words, the group $G_D$ is unipotent.
\end{itemize}
\end{theorem}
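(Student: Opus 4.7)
The plan is to first reduce via $\mathbf{G}(k)$-conjugation to the case $D=D_0$ and $h_D=1$, so that $G_D = \mathbf{B}(k) \cap G$ and $U(G_D) = \mathbf{U}^+(k) \cap G$; all statements transfer back to a general $D$ by conjugation with $h_D$. Since $\mathbf{G}$ is a split Chevalley group, $\mathbf{B} = \mathbf{T} \ltimes \mathbf{U}^+$ as $\mathbb{Z}$-group schemes, whence $\mathbf{B}(k) = \mathbf{T}(k) \ltimes \mathbf{U}^+(k)$ with canonical projection $\pi: \mathbf{B}(k) \to \mathbf{T}(k)$. Set $T(G_D) := \pi(G_D) \subseteq \mathbf{T}(k)$. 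The kernel of $\pi|_{G_D}$ is precisely $\mathbf{U}^+(k) \cap G = U(G_D)$, yielding the short exact sequence $1 \to U(G_D) \to G_D \to T(G_D) \to 1$ of part~(1).

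For~(2), $T(G_D)$ is abelian (as a subgroup of $\mathbf{T}(k) \cong (k^*)^{\mathbf{t}}$) and I would apply the componentwise $\mathcal{S}$-valuation map
\[ \nu := \bigoplus_{\p \in \mathcal{S}} \nu_\p : \mathbf{T}(k) \longrightarrow \mathbb{Z}^{\mathbf{t} \cdot \sharp\mathcal{S}}. \]
The image $\nu(T(G_D))$ is free abelian of rank $r \leq \mathbf{t} \cdot \sharp\mathcal{S}$, so the sequence $1 \to T \to T(G_D) \to \mathbb{Z}^r \to 1$ with $T := \ker(\nu|_{T(G_D)})$ splits, yielding $T(G_D) \cong T \times \mathbb{Z}^r$. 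The key identity is $\mathbf{T}(\OS) \cap \ker\nu = \mathbf{T}(\mathbb{F})$, which holds because any $u \in k^*$ with $\nu_\q(u) = 0$ at every closed point $\q$ of $\mathcal{C}$ is a nonzero regular function on the smooth projective integral curve $\mathcal{C}$, hence a constant in $\mathbb{F}^*$ (using that $\mathbb{F}$ is algebraically closed in $k$). By $\mathcal{S}$-arithmeticity, pick a finite-index subgroup $G_0 \leq G$ with $G_0 \subseteq \mathbf{G}(\OS)$; then $T \cap G_0 \subseteq \mathbf{T}(\mathbb{F})$ has finite index in $T$, giving the commensurability statement. Part~(3) is the case $G_0=G$, while for~(4) the product formula $\sum_\q \nu_\q(u)\deg(\q) = 0$ applied to $u \in \OS^*$ with $\mathcal{S}=\{\p\}$ forces $\nu_\p(u)=0$, so $\OS^* = \mathbb{F}^*$, $\mathbf{T}(\OS) \subseteq \ker\nu$, and $r=0$.

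Part~(5) is the main obstacle and requires a finer torsion analysis. From~(3) and~(4), $T(G_D) = T \subseteq \mathbf{T}(\mathbb{F})$, which is finite of order coprime to $p$ since $\mathbb{F}$ is a finite field of characteristic $p$. Suppose for contradiction that $t \in T(G_D)$ is nontrivial, of order $n$ coprime to $p$, and lift it to $g = tu \in G_D$ with $u \in \mathbf{U}^+(k)$. A direct expansion in the semidirect product shows $g^n \in \mathbf{U}^+(k)$; as $k$ has characteristic $p$, unipotent elements of $\mathbf{U}^+(k)$ have $p$-power order, so $g^n$ has order $p^m$ for some $m \geq 0$, and hence $g$ has finite order dividing $np^m$. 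The $p$-primary torsion hypothesis then forces $\mathrm{ord}(g) = p^\ell$ for some $\ell$; projecting $g^{p^\ell}=1$ to $\mathbf{T}(k)$ yields $t^{p^\ell}=1$, which combined with $\gcd(n,p)=1$ forces $t=1$, a contradiction. The subtlety absent in parts~(1)--(4) is the need to combine the Jordan-type decomposition inside the semidirect product with the finiteness of $\mathbb{F}$ and the $p$-primary hypothesis in order to rule out any residual torsion in the torus quotient.
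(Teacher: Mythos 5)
Your construction of $T(G_D)$ as the image of the projection $\mathbf{B}(k)=\mathbf{T}(k)\ltimes\mathbf{U}^{+}(k)\to\mathbf{T}(k)$ gives part~(1) exactly as in the paper, and your idea of splitting $T(G_D)$ by the valuation map $\nu=\bigoplus_{\p\in\mathcal{S}}\nu_{\p}$ is a genuinely different and more elementary route than the paper's, which instead compares $G_D$ with the pointwise stabilizer $G_{\gamma(D)}$ of the germ of a sector chamber, uses $\widehat{P}^{\p}_{\gamma(D_{0,\p})}=\mathbf{T}(\mathcal{O}_{\p})\mathbf{U}^{+}(k_{\p})$, and for~(4) invokes the embedding of a subsector chamber in the quotient (Proposition~\ref{coro stab of sector chamber for p}), which your product-formula argument avoids. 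Your Bezout argument for~(5) coincides with the paper's.

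There is, however, a genuine gap at the opening move. Conjugating by $h_D$ replaces $G$ by $h_DGh_D^{-1}$, which is still $\mathcal{S}$-arithmetic but is \emph{not} contained in $\mathbf{G}(\OS)$ when $G$ is; so the reduction to $h_D=1$ destroys the hypothesis of~(3)--(5). What you actually need, and implicitly use, is the containment $T(G_D)\subseteq h_D^{-1}\mathbf{T}(\OS)h_D$ whenever $G\subseteq\mathbf{G}(\OS)$, so that intersecting with $\ker\nu$ lands in $h_D^{-1}\mathbf{T}(\mathbb{F})h_D$. This is true but not formal: for $g=tu\in G_D$ one must show that the values $\chi(h_Dth_D^{-1})$, $\chi\in X^{*}(\mathbf{T})$, are products of eigenvalues of $\rho(h_Dgh_D^{-1})$ for a faithful $\mathbb{Z}$-representation $\rho$, hence roots of the characteristic polynomial of $\rho(g)\in\mathrm{SL}_n(\OS)$, hence integral over the integrally closed ring $\OS$ and therefore lie in $\OS^{\times}$ since their product is $1$. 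This integrality step is exactly the content of the paper's Proposition~\ref{prop torus in F} and is the one nontrivial arithmetic input of the theorem; you have assumed it away. Without it your argument yields only the commensurability statement of~(2) (and even there $G_0$ must be chosen inside $\mathbf{G}(\OS)\cap h_D^{-1}\mathbf{G}(\OS)h_D$, not merely inside $\mathbf{G}(\OS)$), while parts~(3) and~(5) --- which need the exact containment $T\subseteq h_D^{-1}\mathbf{T}(\mathbb{F})h_D$, not containment up to finite index, in order to guarantee that every $t\in T(G_D)$ has order prime to $p$ --- remain unproved. Part~(4) does survive on commensurability alone, since a subgroup of $\mathbb{Z}^{\mathbf{t}}$ with a trivial finite-index subgroup is trivial.
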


Note that Theorem~\ref{main teo 3}(3) implies that the group $T$ is finite whenever $\mathbb{F}$ is finite.
Thus, Theorem~\ref{main teo 3}(2) describes the decomposition in the free part and the torsion part of the finitely generated abelian group $G_D/U(G_D)$ whenever $\mathbb{F}$ is finite.

Assuming that $\mathbb{F}$ is finite, in \S~\ref{section examples}, we apply Theorems~\ref{main teo 1}, \ref{main teo 3} and \ref{main teo 2} in order to characterize the maximal unipotent subgroups of the principal congruence subgroup $\Gamma$ of $\mathbf{G}(\OS)$.
For instance, we prove that, when $\mathcal{S} = \{ \p \}$, these unipotent subgroups are exactly the $\Gamma$-stabilizers of the chambers of the spherical building defined from $(\mathbf{G},k)$, i.e., the groups $G_D$ defined in Theorem~\ref{main teo 1}.
In Corollary~\ref{corollary principal cong sub and principal ideal domains} and Example~\ref{ex max unip sln}, we precisely describe the preceding unipotent subgroups when $\OS$ is a principal ideal domain or when $\mathbf{G}=\mathrm{SL}_n$.

\section{Enumerating the conjugacy classes of maximal unipotent subgroups}\label{section consequences}

As above, we assume that the ground field $\mathbb{F}$ is perfect and that $\mathbf{G}_k$ is a split simply connected semisimple $k$-group.
By using Theorems~\ref{main teo 1} and~\ref{main teo 3}, we can count the number of conjugacy classes of maximal unipotent subgroups as follows. 

\begin{corollary}\label{coro number of max unip sub for GA}
There exists a bijective map between the set of conjugacy classes of maximal unipotent subgroups of $\mathbf{G}(\mathcal{O}_{\mathcal{S}})$ and $\mathrm{Pic}(\mathcal{O}_{\mathcal{S}})^{\mathbf{t}}$.
\end{corollary}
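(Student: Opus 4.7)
The plan is to combine Theorem~\ref{main teo 1} and Theorem~\ref{main teo 3}, both of which directly address the two sides of the desired bijection when specialized to $G = \mathbf{G}(\OS)$.

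\medskip

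First, note that $\mathbf{G}(\OS)$ is itself an $\mathcal{S}$-arithmetic subgroup of $\mathbf{G}(k)$ (it is commensurable with itself). I therefore apply Theorem~\ref{main teo 1} with $G = \mathbf{G}(\OS)$. Statement~(3) of that theorem, together with (1) and (2), produces a well-defined surjection
\[
\Psi : \SecD/\mathbf{G}(\OS) \longrightarrow \bigl\{ \text{conjugacy classes of maximal unipotent subgroups of } \mathbf{G}(\OS) \bigr\},
\]
given by $[D] \mapsto [U(G_D)]$. Indeed, by part~(2) the $U(G_D)$ exhaust all maximal unipotent subgroups, and the description (\ref{eq G_s and borel}) makes it immediate that if $D' = g\cdot D$ for some $g \in \mathbf{G}(\OS)$, then $U(G_{D'}) = g\, U(G_D)\, g^{-1}$, so $\Psi$ is well-defined. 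The fact that it is a bijection is exactly the content of statement~(3): $\{U(G_\sigma) : \sigma \in \Sigma\}$ is a \emph{set of representatives} of the conjugacy classes, meaning every class is hit exactly once by a choice of orbit representative.

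\medskip

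Second, I invoke Theorem~\ref{main teo 3} directly: there is a bijection
\[
\Phi : \SecD/\mathbf{G}(\OS) \longrightarrow \mathrm{Pic}(\OS)^{\mathbf{t}}.
\]
Composing, the map $\Psi \circ \Phi^{-1}$ yields the required bijection between $\mathrm{Pic}(\OS)^{\mathbf{t}}$ and the set of conjugacy classes of maximal unipotent subgroups of $\mathbf{G}(\OS)$.

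\medskip

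There is essentially no obstacle here, since the corollary is precisely the concatenation of the two main theorems. The only point requiring minor vigilance is the well-definedness of $\Psi$: one needs that two $G$-orbit representatives $D$ and $D' = g \cdot D$ produce conjugate unipotent groups, which is transparent from the formula $U(G_D) = h_D^{-1}\mathbf{U}^+(k)h_D \cap G$ upon choosing $h_{D'} = h_D g^{-1}$ (any other choice differs by an element of $\mathbf{B}(k)$, which normalizes $\mathbf{U}^+(k)$, so the intersection with $G$ is unchanged). Once this is remarked, the corollary follows.
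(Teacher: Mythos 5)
Your proof is correct and follows exactly the route the paper takes: the paper's own proof is the one-line observation that the corollary is an immediate consequence of Theorem~\ref{main teo 1}~(3) combined with Theorem~\ref{main teo 3}, which is precisely the composition $\Psi \circ \Phi^{-1}$ you construct. Your additional check that $\Psi$ is well-defined (via the normalization of $\mathbf{U}^+(k)$ by $\mathbf{B}(k)$) is a detail the paper leaves implicit but is consistent with its Lemma~\ref{lemma cont of unipotents} and the end of the proof of Theorem~\ref{main teo 1}.
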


\begin{proof}
This is an immediate consequence of Theorem~\ref{main teo 1}~(3) and Theorem~\ref{main teo 3}.
\end{proof}

The following result shows that, when $\mathcal{O}_{\mathcal{S}}$ is a principal ideal domain, the characterization of the conjugacy classes of maximal unipotent subgroups of $\mathbf{G}(\mathcal{O}_{\mathcal{S}})$, is analogous to their description when we work over a field instead of $\OS$.

\begin{corollary}\label{coro max unip sub A principal}
The function ring $\mathcal{O}_{\mathcal{S}}$ is a principal ideal domain if and only if any maximal unipotent subgroup of $\mathbf{G}(\mathcal{O}_{\mathcal{S}})$ is conjugate to $\mathbf{U}^{+}(\mathcal{O}_{\mathcal{S}})$.
\end{corollary}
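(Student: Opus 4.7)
The plan is to realize the statement as a direct consequence of Corollary~\ref{coro number of max unip sub for GA}, combined with the classical equivalence, for a Dedekind domain $R$, between $R$ being a principal ideal domain and $\mathrm{Pic}(R)$ being trivial. The ring $\mathcal{O}_{\mathcal{S}}$ is the coordinate ring of the smooth affine curve $\mathcal{C} \setminus \mathcal{S}$, hence is Dedekind; and since $\mathbf{G}$ is a non-trivial split semisimple $\mathbb{Z}$-group scheme, $\mathbf{t} = \mathrm{rk}(\mathbf{G}) \geq 1$, so that $\mathrm{Pic}(\mathcal{O}_\mathcal{S})^{\mathbf{t}}$ is trivial if and only if $\mathrm{Pic}(\mathcal{O}_\mathcal{S})$ is.

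First I would exhibit $\mathbf{U}^+(\mathcal{O}_\mathcal{S})$ as a canonical representative of a conjugacy class of maximal unipotent subgroups of $\mathbf{G}(\mathcal{O}_\mathcal{S})$. Applying Theorem~\ref{main teo 1} to $G = \mathbf{G}(\mathcal{O}_\mathcal{S})$ at the distinguished chamber $\D_0$, one may take $h_{\D_0} = e$, whence
\[
U(G_{\D_0}) \;=\; \mathbf{U}^+(k) \cap \mathbf{G}(\mathcal{O}_\mathcal{S}) \;=\; \mathbf{U}^+(\mathcal{O}_\mathcal{S}),
\]
the second equality following from the fact that $\mathbf{U}^+$ is a closed $\mathbb{Z}$-subgroup scheme of $\mathbf{G}$. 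By Theorem~\ref{main teo 1}(2), this set lies in $\mathfrak{U}$.

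With this base point in hand, both implications follow. If $\mathcal{O}_\mathcal{S}$ is a PID, then $\mathrm{Pic}(\mathcal{O}_\mathcal{S})^{\mathbf{t}}$ is trivial, so by Corollary~\ref{coro number of max unip sub for GA} the set of conjugacy classes of maximal unipotent subgroups of $\mathbf{G}(\mathcal{O}_\mathcal{S})$ is a singleton, which must be represented by $\mathbf{U}^+(\mathcal{O}_\mathcal{S})$. Conversely, if every maximal unipotent subgroup is conjugate to $\mathbf{U}^+(\mathcal{O}_\mathcal{S})$, then there is a single conjugacy class, so $|\mathrm{Pic}(\mathcal{O}_\mathcal{S})^{\mathbf{t}}| = 1$; since $\mathbf{t} \geq 1$, $\mathrm{Pic}(\mathcal{O}_\mathcal{S})$ vanishes, and the Dedekind domain $\mathcal{O}_\mathcal{S}$ is a PID. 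The only mild obstacle is to check the identification $\mathbf{U}^+(\mathcal{O}_\mathcal{S}) = \mathbf{U}^+(k) \cap \mathbf{G}(\mathcal{O}_\mathcal{S})$, which I would treat as a routine consequence of the closedness of $\mathbf{U}^+$ in $\mathbf{G}$ as $\mathbb{Z}$-group schemes.
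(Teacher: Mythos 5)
Your proposal is correct and follows essentially the same route as the paper: both directions reduce to the equivalence between $\mathcal{O}_{\mathcal{S}}$ being a PID and $\mathrm{Pic}(\mathcal{O}_{\mathcal{S}})$ being trivial, combined with the parametrization of conjugacy classes by $\mathrm{Pic}(\mathcal{O}_{\mathcal{S}})^{\mathbf{t}}$ (you invoke Corollary~\ref{coro number of max unip sub for GA}, the paper invokes Theorems~\ref{main teo 1} and~\ref{main teo 3} directly, which amounts to the same thing), together with the identification $U(G_{D_0})=\mathbf{U}^{+}(k)\cap\mathbf{G}(\mathcal{O}_{\mathcal{S}})=\mathbf{U}^{+}(\mathcal{O}_{\mathcal{S}})$ for $h_{D_0}=1$. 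Your explicit remark that $\mathbf{t}\geq 1$ is needed to pass from triviality of $\mathrm{Pic}(\mathcal{O}_{\mathcal{S}})^{\mathbf{t}}$ to that of $\mathrm{Pic}(\mathcal{O}_{\mathcal{S}})$ is a small point the paper leaves implicit.
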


\begin{proof}
It is well known that $\mathcal{O}_{\mathcal{S}}$ is a principal ideal domain if and only if $\mathrm{Pic}(\mathcal{O}_{\mathcal{S}})$ is trivial.
But, Theorem~\ref{main teo 3} shows that $\mathrm{Pic}(\mathcal{O}_{\mathcal{S}})$ is trivial exactly when $\mathbf{G}(\mathcal{O}_{\mathcal{S}})$ acts transitively on $\SecD$.
Thus, $\mathcal{O}_{\mathcal{S}}$ is a principal ideal domain precisely when there exists a unique conjugacy class of maximal unipotent subgroups in $\mathbf{G}(\mathcal{O}_{\mathcal{S}})$.
Therefore, if each maximal unipotent subgroup of $\mathbf{G}(\mathcal{O}_{\mathcal{S}})$ is conjugate to $\mathbf{U}^{+}(\mathcal{O}_{\mathcal{S}})$, then $\mathcal{O}_{\mathcal{S}}$ is a principal ideal domain.

Conversely, if $\mathcal{O}_{\mathcal{S}}$ is principal, we can choose $\D_0$ as a representative of the unique $\mathbf{G}(\mathcal{O}_{\mathcal{S}})$-orbit.
Theorem~\ref{main teo 1}, applied with $h_\sigma=1$, shows that each unipotent subgroup of $\mathbf{G}(\mathcal{O}_{\mathcal{S}})$ is conjugate to $U(\mathbf{G}(\mathcal{O}_{\mathcal{S}}))= \mathbf{U}^{+}(k)  \cap \mathbf{G}(\mathcal{O}_{\mathcal{S}})=\mathbf{U}^{+}(\mathcal{O}_{\mathcal{S}})$.
\end{proof}

\begin{corollary}\label{coro max unip sub in sub of GA}
Assume that the field $\mathbb{F}$ is finite.
Then, each $\mathcal{S}$-arithmetic subgroup $G$ of $\mathbf{G}(k)$ has finitely many conjugacy classes of maximal unipotent subgroups.
Moreover, if $G \subseteq \mathbf{G}(\OS)$, then $G$ has at most $[\mathbf{G}(\mathcal{O}_{\mathcal{S}}):G] \cdot \card\big(\mathrm{Pic}(\mathcal{O}_{\mathcal{S}})\big)^{\mathbf{t}}$ conjugacy classes of maximal unipotent subgroups.
\end{corollary}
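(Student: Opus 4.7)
The plan is to reduce the statement to counting orbits of $G$ on $\SecD$, invoking Theorems~\ref{main teo 1} and~\ref{main teo 3}, and then to propagate from $\mathbf{G}(\mathcal{O}_\mathcal{S})$-orbits to $G$-orbits by an elementary double coset argument. By Theorem~\ref{main teo 1}~(3), the conjugacy classes of maximal unipotent subgroups of any $\mathcal{S}$-arithmetic $G$ are in bijection with the set $G \backslash \SecD$, so the whole corollary amounts to bounding this cardinality.

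First I would handle the case $G \subseteq \mathbf{G}(\mathcal{O}_\mathcal{S})$ of finite index. Theorem~\ref{main teo 3} gives that $\mathbf{G}(\mathcal{O}_\mathcal{S})$ acts on $\SecD$ with exactly $\card\bigl(\mathrm{Pic}(\mathcal{O}_\mathcal{S})\bigr)^{\mathbf{t}}$ orbits. When $\mathbb{F}$ is finite, $\mathrm{Pic}(\mathcal{C})$ is an extension of $\mathbb{Z}$ by the finite group $\mathrm{Pic}^0(\mathcal{C})$, and since every closed point of $\mathcal{C}$ has positive degree, the image $\langle \overline{\mathcal{S}}\rangle$ meets every coset of $\mathrm{Pic}^0(\mathcal{C})$ in $\mathrm{Pic}(\mathcal{C})$ with finite index (the case $\mathcal{S} = \emptyset$ being trivial, as then $\mathcal{O}_\mathcal{S} = \mathbb{F}$). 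Hence $\mathrm{Pic}(\mathcal{O}_\mathcal{S}) = \mathrm{Pic}(\mathcal{C})/\langle \overline{\mathcal{S}}\rangle$ is finite, so $\mathbf{G}(\mathcal{O}_\mathcal{S})$ already has finitely many orbits on $\SecD$. Inside each such orbit $\mathbf{G}(\mathcal{O}_\mathcal{S}) \cdot D \cong \mathbf{G}(\mathcal{O}_\mathcal{S}) / \mathrm{Stab}_{\mathbf{G}(\mathcal{O}_\mathcal{S})}(D)$, the $G$-orbits are in bijection with the double cosets $G \backslash \mathbf{G}(\mathcal{O}_\mathcal{S}) / \mathrm{Stab}_{\mathbf{G}(\mathcal{O}_\mathcal{S})}(D)$, whose number is at most $|G \backslash \mathbf{G}(\mathcal{O}_\mathcal{S})| = [\mathbf{G}(\mathcal{O}_\mathcal{S}) : G]$. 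Multiplying the two bounds yields
\[
 \bigl|G \backslash \SecD\bigr| \;\leq\; [\mathbf{G}(\mathcal{O}_\mathcal{S}) : G] \cdot \card\bigl(\mathrm{Pic}(\mathcal{O}_\mathcal{S})\bigr)^{\mathbf{t}},
\]
which is the quantitative bound of the corollary.

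For a general $\mathcal{S}$-arithmetic subgroup $G$ of $\mathbf{G}(k)$, I would introduce $H := G \cap \mathbf{G}(\mathcal{O}_\mathcal{S})$. By commensurability, $H$ is of finite index in both $G$ and $\mathbf{G}(\mathcal{O}_\mathcal{S})$. Applying the previous step to $H \subseteq \mathbf{G}(\mathcal{O}_\mathcal{S})$ yields that $H$ acts on $\SecD$ with finitely many orbits. Since $G \supseteq H$, every $G$-orbit is a union of $H$-orbits, so $|G \backslash \SecD| \leq |H \backslash \SecD| < \infty$, and Theorem~\ref{main teo 1}~(3) provides the finiteness statement.

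I do not anticipate any serious obstacle here: the combinatorial parametrization has already been carried out by Theorems~\ref{main teo 1} and~\ref{main teo 3}, and the two remaining ingredients (classical finiteness of $\mathrm{Pic}(\mathcal{O}_\mathcal{S})$ over a finite base field, and the elementary orbit-counting lemma for finite-index subgroups) are both standard. The only point that deserves care is ensuring that the quantitative bound applies only when $G \subseteq \mathbf{G}(\mathcal{O}_\mathcal{S})$, since for an arbitrary commensurable $G$ the index $[\mathbf{G}(\mathcal{O}_\mathcal{S}) : G]$ has no meaning and one must pass through the intersection $H$.
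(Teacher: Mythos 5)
Your proposal is correct and follows essentially the same route as the paper: reduce to counting $G$-orbits on $\SecD$ via Theorem~\ref{main teo 1}~(3), bound these by $[\mathbf{G}(\OS):G]$ times the number of $\mathbf{G}(\OS)$-orbits (which Theorem~\ref{main teo 3} identifies with $\mathrm{Pic}(\OS)^{\mathbf{t}}$, finite over a finite field), and handle a general commensurable $G$ by passing to a common finite-index subgroup contained in $G \cap \mathbf{G}(\OS)$ and using that enlarging the group can only merge orbits. Your explicit double-coset count within each $\mathbf{G}(\OS)$-orbit is just a spelled-out version of the paper's one-line index bound.
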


\begin{proof}
Assume that $G \subseteq \mathbf{G}(\OS)$.
Then, the number of $G$-orbits in $\SecD$ is less than or equal to the number of $\mathbf{G}(\mathcal{O}_{\mathcal{S}})$-orbits in $\SecD$ multiplied by the index $[\mathbf{G}(\mathcal{O}_{\mathcal{S}}):G]$.
Then, the second statement follows from Corollary~\ref{coro number of max unip sub for GA}.
Since $\mathrm{Pic}(\mathcal{O}_{\mathcal{S}})$ is finite whenever $\mathbb{F}$ is finite according to Weyl’s theorem (cf.~\cite[Ch. II, \S~2.2]{S}), the group $G \subseteq \mathbf{G}(\OS)$ has finitely many conjugacy classes of maximal unipotent subgroups.

Now, let $G$ be a arbitrary $\mathcal{S}$-arithmetic subgroup of $\mathbf{G}(k)$.
Let $G^{\natural}$ be a subgroup of $G \cap \mathbf{G}(\OS)$ such that the indices $[G:G^{\natural}]$ and $[\mathbf{G}(\mathcal{O}_{\mathcal{S}}): G^{\natural}]$ are finite. 
Since $[\mathbf{G}(\mathcal{O}_{\mathcal{S}}): G^{\natural}]$ is finite, the group $G^{\natural}$ has finitely many conjugacy classes of maximal unipotent subgroups.
Equivalently, the number of $G^{\natural}$-orbits in $\SecD$ is finite (cf. Theorem~\ref{main teo 1}~(3)).
Since $G^{\natural} \subseteq G$, the number of $G$-orbits in $\SecD$ is also finite.
Thus, it follows from Theorem~\ref{main teo 1}~(3) that $G$ has finitely many conjugacy classes of maximal unipotent subgroups. This proves the first statement.
\end{proof}

Let $\mathrm{Div}(\mathcal{C})$ be the group of divisor on the curve $\mathcal{C}$.
We denote by $\langle \mathcal{S} \rangle$ the subgroup of $\mathrm{Div}(\mathcal{C})$ generated by a finite set $\mathcal{S}$ of closed points of $\mathcal{C}$.
The following result shows that, for some large enough set of places $\mathcal{S}$, the group $\mathbf{G}(\OS)$ has a unique conjugacy class of maximal unipotent subgroups, since
$\OS$ is a principal ideal domain for such sets $\mathcal{S}$. This holds even when $\mathcal{C}$ has no rational points.

\begin{corollary}\label{coro unique max unip for big S}
Assume that the field $\mathbb{F}$ is finite.
Then, there exists a finite set of closed points $\mathcal{S}$, such that:
\begin{itemize}
\item[(C1)] each maximal unipotent subgroup of $\mathbf{G}(\mathcal{O}_{\mathcal{S}})$ is conjugate to $\mathbf{U}^{+}(\mathcal{O}_{\mathcal{S}})$.
\end{itemize}
Moreover, if $\mathcal{S}$ and $\mathcal{S'}$ satisfy (C1), $\mathcal{S} \cap \mathcal{S'} \neq \emptyset$ and $\langle \mathcal{S} \cup \mathcal{S}' \rangle$ has no non-trivial principal divisors, then $\mathcal{S} \cap \mathcal{S'}$ also satisfies (C1).
\end{corollary}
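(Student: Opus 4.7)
The plan is to recast (C1) in terms of the Picard group. By Corollary~\ref{coro max unip sub A principal}, property (C1) for $\mathcal{S}$ is equivalent to $\mathcal{O}_{\mathcal{S}}$ being a principal ideal domain, which via the isomorphism $\mathrm{Pic}(\mathcal{O}_{\mathcal{S}}) \cong \mathrm{Pic}(\mathcal{C}) / \langle \overline{\mathcal{S}} \rangle$ recalled before Theorem~\ref{main teo 3} amounts to saying that the image $\overline{\mathcal{S}}$ of $\mathcal{S}$ generates $\mathrm{Pic}(\mathcal{C})$ as an abelian group. Both assertions therefore reduce to statements about generation of $\mathrm{Pic}(\mathcal{C})$.

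For the existence part, I would use that, when $\mathbb{F}$ is finite, the group $\mathrm{Pic}(\mathcal{C})$ is finitely generated, as an extension of a subgroup of $\mathbb{Z}$ (the image of the degree map) by the finite group $\mathrm{Pic}^{0}(\mathcal{C})$ of $\mathbb{F}$-rational points of the Jacobian. Fixing a finite generating set of $\mathrm{Pic}(\mathcal{C})$, lifting each generator to a divisor, and gathering all closed points that appear in these supports produces a finite set $\mathcal{S}$ whose image generates $\mathrm{Pic}(\mathcal{C})$; this $\mathcal{S}$ then satisfies (C1).

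For the moreover part, let $x \in \mathrm{Pic}(\mathcal{C})$ be arbitrary. Applying (C1) for $\mathcal{S}$ and for $\mathcal{S}'$ gives two writings $x = \sum_{p \in \mathcal{S}} a_{p}\, \overline{p} = \sum_{q \in \mathcal{S}'} b_{q}\, \overline{q}$. Then the divisor $D = \sum_{p \in \mathcal{S}} a_{p}\, p - \sum_{q \in \mathcal{S}'} b_{q}\, q$ is principal and lies in $\langle \mathcal{S} \cup \mathcal{S}' \rangle$, so the hypothesis forces $D = 0$ in the free abelian group $\mathrm{Div}(\mathcal{C})$. Comparing coefficients at each closed point yields $a_{p} = 0$ for every $p \in \mathcal{S} \setminus \mathcal{S}'$. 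Hence $x = \sum_{p \in \mathcal{S} \cap \mathcal{S}'} a_{p}\, \overline{p}$, so $\overline{\mathcal{S} \cap \mathcal{S}'}$ generates $\mathrm{Pic}(\mathcal{C})$ and $\mathcal{S} \cap \mathcal{S}'$ satisfies (C1).

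The key manoeuvre is converting a relation that holds modulo principal divisors into an equality inside the free abelian group $\mathrm{Div}(\mathcal{C})$, which is precisely what the assumption on $\langle \mathcal{S} \cup \mathcal{S}' \rangle$ allows. The hypothesis $\mathcal{S} \cap \mathcal{S}' \neq \emptyset$ is not actually invoked in the coefficient argument; it merely ensures the conclusion is non-vacuous whenever $\mathrm{Pic}(\mathcal{C}) \neq 0$.
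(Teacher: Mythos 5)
Your proof is correct and follows essentially the same route as the paper: both parts are reduced, via Corollary~\ref{coro max unip sub A principal}, to the triviality of $\mathrm{Pic}(\mathcal{O}_{\mathcal{S}}) \cong \mathrm{Pic}(\mathcal{C})/\langle \overline{\mathcal{S}} \rangle$, i.e.\ to $\overline{\mathcal{S}}$ generating $\mathrm{Pic}(\mathcal{C})$. The only difference is presentational: for the ``moreover'' part the paper runs a short exact sequence of Picard groups and shows the kernel $\langle \overline{\mathcal{S}} \rangle / \langle \overline{\mathcal{S} \cap \mathcal{S}'} \rangle$ vanishes, whereas you compare coefficients of a principal divisor directly in the free abelian group $\mathrm{Div}(\mathcal{C})$ --- both arguments hinge on the identical use of the hypothesis that $\langle \mathcal{S} \cup \mathcal{S}' \rangle$ contains no non-trivial principal divisor (and note that $\mathcal{S} \cap \mathcal{S}' \neq \emptyset$ is still needed so that $\mathcal{O}_{\mathcal{S} \cap \mathcal{S}'}$ is a genuine ring of $\mathcal{S}$-integers to which the corollary applies).
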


\begin{proof}
Let $\mathcal{S}_0=\lbrace \p \rbrace$, where $\p$ is a closed point of $\mathcal{C}$. Let us write $A=\mathcal{O}_{\mathcal{S}_0}$.
Since $\mathbb{F}$ is finite, the Picard group $\mathrm{Pic}(A)=\mathrm{Pic}(\mathcal{C})/\langle \overline{\p} \rangle$ is finite (cf.~\cite[Ch. II, \S 2.2]{S}).
Let $\p_1, \cdots, \p_n$ be a finite set of closed point of $\mathcal{C}$ such that $\overline{\p_1}, \cdots, \overline{\p_n}$ generate $\mathrm{Pic}(A)$.
Let us write $\mathcal{S}=\lbrace \p, \p_1, \cdots, \p_n \rbrace$.
Then, the Picard group $\mathrm{Pic}(\mathcal{O}_{\mathcal{S}})=\mathrm{Pic}(\mathcal{C})/\langle \overline{\mathcal{S}} \rangle$ is isomorphic to $\mathrm{Pic}(A) / \left( \langle \overline{\mathcal{S}} \rangle/ \langle \overline{\p} \rangle \right)$, which is trivial, since $\overline{\mathcal{S}}$ contains a set that generates $\mathrm{Pic}(A)$.
In other words, the ring $\mathcal{O}_{\mathcal{S}}$ is a principal ideal domain.
Thus, it follows from Corollary~\ref{coro max unip sub A principal} that $\mathcal{S}$ satisfies (C1). Hence, the first statement follows.

Now, assume that $\mathcal{S}$ and $\mathcal{S'}$ satisfy (C1) and $\mathcal{S} \cap \mathcal{S'} \neq \emptyset$. Then, it follows from Corollary~\ref{coro max unip sub A principal} that $\mathrm{Pic}(\mathcal{O}_{\mathcal{S}})$ and $\mathrm{Pic}(\mathcal{O}_{\mathcal{S}'})$ are trivial. We have the exact sequence
\begin{equation}\label{eq seq pic}
0 \to \left( \langle \overline{\mathcal{S}} \rangle/ \langle \overline{ \mathcal{S} \cap \mathcal{S}'} \rangle \right) \to \mathrm{Pic}(\mathcal{O}_{\mathcal{S} \cap \mathcal{S'}}) \to \mathrm{Pic}(\mathcal{O}_{\mathcal{S}}) \to 0.
\end{equation}
Let $\pi: \mathrm{Pic}(\mathcal{C}) \to \mathrm{Pic}(\mathcal{O}_{\mathcal{S}'})$ be the canonical
projection.
Then, the group ${\pi\left( \langle \overline{\mathcal{S}} \rangle \right)= \left( \langle \overline{\mathcal{S}} \rangle+\langle \overline{\mathcal{S}'} \rangle\right)/ \langle \overline{ \mathcal{S}'} \rangle} \cong \langle \overline{\mathcal{S}} \rangle/ \left(\langle \overline{ \mathcal{S}} \rangle \cap \langle \overline{ \mathcal{S}'} \rangle \right)$.
Since $\langle \mathcal{S} \cup \mathcal{S}' \rangle$ does not contains any non-trivial principal divisors, we have $\langle \overline{ \mathcal{S}} \rangle \cap \langle \overline{ \mathcal{S}'} \rangle= \langle \overline{ \mathcal{S} \cap \mathcal{S}'} \rangle$, whence $\pi\left( \langle \overline{\mathcal{S}} \rangle \right)= \left( \langle \overline{\mathcal{S}} \rangle/ \langle \overline{ \mathcal{S} \cap \mathcal{S}'} \rangle \right)$.
Hence, since $\mathrm{Pic}(\mathcal{O}_{\mathcal{S}'})$ is trivial, we have that $ \langle \overline{\mathcal{S}} \rangle/ \langle \overline{ \mathcal{S} \cap \mathcal{S}'} \rangle $ is trivial.
Therefore, since $\mathrm{Pic}(\mathcal{O}_{\mathcal{S}})$ is trivial, we conclude from Equation~\eqref{eq seq pic} that $\mathrm{Pic}(\mathcal{O}_{\mathcal{S} \cap \mathcal{S'}})$ is trivial, whence $\mathcal{S}\cap \mathcal{S}'$ satisfies (C1) according to Corollary~\ref{coro max unip sub A principal}.
\end{proof}

\section{Maximal unipotent subgroups}\label{section maximal unipotents}

The main goal of this section is to prove Theorem~\ref{main teo 1}.
We begin with some preliminary results.
The next lemma is straightforward.

\begin{lemma}\label{lemma indices for perfect fields}
Assume that $\mathbb{F}$ is a perfect field of characteristic $p>0$. Then ${[k:k^p]=p}$. \qed
\end{lemma}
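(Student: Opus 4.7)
The plan is to exhibit a separating transcendence basis for $k/\mathbb{F}$ and compare the degrees of $k/\mathbb{F}(x)$ and $k^p/\mathbb{F}(x^p)$ via the Frobenius endomorphism. Since $\mathcal{C}$ is a smooth geometrically integral curve, $k = \mathbb{F}(\mathcal{C})$ has transcendence degree $1$ over $\mathbb{F}$. By the hypothesis already recalled in the introduction, one may choose $x \in k$ transcendental over $\mathbb{F}$ such that $k/\mathbb{F}(x)$ is separable. As $k$ has transcendence degree $1$, the extension $k/\mathbb{F}(x)$ is algebraic, and being finitely generated, it is finite separable. I set $n = [k:\mathbb{F}(x)]$.

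First, I would compute $[k:\mathbb{F}(x^p)]$. It is classical that $[\mathbb{F}(x):\mathbb{F}(x^p)] = p$, the minimal polynomial of $x$ over $\mathbb{F}(x^p)$ being $Y^p - x^p$. The tower formula then yields $[k:\mathbb{F}(x^p)] = np$.

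The crucial step uses the perfectness of $\mathbb{F}$: from $\mathbb{F}^p = \mathbb{F}$ one obtains $\mathbb{F}(x)^p = \mathbb{F}^p(x^p) = \mathbb{F}(x^p)$. The Frobenius map $\varphi : k \to k$, $y \mapsto y^p$, is an injective ring homomorphism with image $k^p$, and it restricts to a field isomorphism $\mathbb{F}(x) \to \mathbb{F}(x)^p = \mathbb{F}(x^p)$. Hence it induces an isomorphism of field extensions $k/\mathbb{F}(x) \cong k^p/\mathbb{F}(x^p)$, and in particular $[k^p:\mathbb{F}(x^p)] = n$.

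Combining these two computations via the tower $\mathbb{F}(x^p) \subseteq k^p \subseteq k$ gives
\[ [k:k^p] = \frac{[k:\mathbb{F}(x^p)]}{[k^p:\mathbb{F}(x^p)]} = \frac{np}{n} = p. \]
I do not expect any serious obstacle here. The only delicate point is the identification $\mathbb{F}(x)^p = \mathbb{F}(x^p)$, which is precisely where the perfectness of $\mathbb{F}$ is essential; without it, only the inclusion $\mathbb{F}^p(x^p) \subseteq \mathbb{F}(x)^p$ and the strict inequality $[k:k^p] > p$ could occur.
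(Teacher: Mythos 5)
Your proof is correct and follows essentially the same route as the paper's: both rest on the equality $[k:\mathbb{F}(x)]=[k^p:\mathbb{F}(x^p)]$ (which needs $\mathbb{F}$ perfect so that $\mathbb{F}(x)^p=\mathbb{F}(x^p)$) combined with the tower formula over $\mathbb{F}(x^p)$ and $[\mathbb{F}(x):\mathbb{F}(x^p)]=p$. The only difference is cosmetic: you obtain the key equality in one stroke from the Frobenius isomorphism of extensions, whereas the paper verifies it by comparing degrees of the minimal polynomials of a chain of generators $\theta_1,\dots,\theta_r$; your separability remark is not actually needed for the argument.
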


\begin{comment}
\begin{proof}
Recall that, since $k$ is the function field of a curve, $k$ is a finite extension of $\mathbb{F}(u)$, where $u \in k$ is transcendental over $\mathbb{F}$.
Let us write $k=\mathbb{F}(u, \theta_1, \cdots,\theta_r)$, where $\theta_i$ is algebraic over $\mathbb{F}(u)$.
Since $\mathbb{F}$ is perfect of characteristic $p>0$, we have $k^p=\mathbb{F}(u^p, \theta_1^p, \cdots,\theta_r^p)$.
Let $f_i(T)$ (resp. $g_i(T)$) be the irreducible polynomial of $\theta_i$ (resp. $\theta_i^p$) in $L_i:=\mathbb{F}(u, \theta_1, \cdots,\theta_{i-1})$ (resp. in $F_i:=\mathbb{F}(u^p, \theta_1^p, \cdots,\theta_{i-1}^p)$). 
On the one hand, since $0=g_i(\theta_i^p)=(\tilde{g}_i(\theta_i))^p$, for some $\tilde{g}_i \in L_i[T]$ with the same degree of $g_i$, we have $\deg(g_i) \geq \deg(f_i)$.
On the other hand, since $0=f_i(\theta_i)^p=\tilde{f}_i(\theta_i^p)$, for $\tilde{f}_i \in F_i[T]$ with the same degree of $f_i$, we have $\deg(g_i) \leq \deg(f_i)$.
Thus 
$$[\mathbb{F}(u, \theta_1, \cdots,\theta_r): \mathbb{F}(u)]=\prod_{i=1}^r \deg(f_i)=\prod_{i=1}^r \deg(g_i)=[\mathbb{F}(u^p, \theta_1^p, \cdots,\theta_r^p): \mathbb{F}(u^p)],$$
whence $[k:k^p]=[\mathbb{F}(u):\mathbb{F}(u^p)]=p$.
\end{proof}
\end{comment}

\begin{lemma}\label{lemma unipotents in G}
Assume that the ground field $\mathbb{F}$ is perfect (of arbitrary characteristic).
Then, for each unipotent subgroup $U$ of $\mathbf{G}(k)$, there exists $h \in \mathbf{G}(k)$ such that ${U \subseteq h^{-1} \mathbf{U}^{+}(k) h}$.
\end{lemma}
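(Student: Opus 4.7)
The plan is to reduce to the field case treated in the introduction by exploiting the hypothesis that $\mathbb{F}$ is perfect. First, I would note that $k$ satisfies the conditions needed to apply the existing embedding theorems for unipotent subgroups: either $k$ has characteristic $0$, in which case \cite[Corollary 3.7]{BoTi2} applies directly, or $k$ has positive characteristic $p$, in which case Lemma~\ref{lemma indices for perfect fields} just proved gives $[k:k^p]=p\leq p$, so that \cite[Theorem 2]{G} applies. In either case, there exists a maximal unipotent subgroup-scheme $\mathbf{U}_k \subseteq \mathbf{G}_k$ such that $U \subseteq \mathbf{U}_k(k)$.

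Next, I would use the general structure of maximal unipotent subgroup-schemes. Since $\mathbf{U}_k(k)$ is solvable, it is contained in a Borel $k$-subgroup $\mathbf{B}_k$ of $\mathbf{G}_k$, and by maximality $\mathbf{U}_k$ is the unipotent radical of $\mathbf{B}_k$. At this point I have translated the question into a question about Borel subgroups rather than arbitrary unipotent subgroups.

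Now I would invoke the fact that $\mathbf{G}_k$ is split. All Borel $k$-subgroups of a split semisimple $k$-group are conjugate under $\mathbf{G}(k)$ (this is the transitive action of $\mathbf{G}(k)$ on chambers of the spherical Tits building recalled in the introduction). Therefore there exists $h \in \mathbf{G}(k)$ with $h \mathbf{B}_k h^{-1} = \mathbf{B}$, which after passing to the unipotent radicals gives $h \mathbf{U}_k h^{-1} = \mathbf{U}^+$, equivalently $\mathbf{U}_k = h^{-1}\mathbf{U}^+ h$. Combining with the inclusion $U \subseteq \mathbf{U}_k(k)$ from the first step, I get $U \subseteq h^{-1} \mathbf{U}^+(k) h$, which is exactly the desired conclusion.

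The only substantive point is the application of \cite[Theorem 2]{G} in positive characteristic, which is why the preceding lemma on $[k:k^p]$ was proved; everything else is a direct consequence of the Borel-structure theory and the split hypothesis. Since we are only asserting containment in a conjugate of $\mathbf{U}^+(k)$ (and not equality, nor anything concerning maximality inside $G$), no arithmetic input beyond the above is needed.
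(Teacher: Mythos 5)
Your proposal is correct and follows essentially the same route as the paper: both invoke \cite[Corollary 3.7]{BoTi2} in characteristic zero and \cite[Theorem 2]{G} together with Lemma~\ref{lemma indices for perfect fields} in positive characteristic to embed $U$ into the rational points of a unipotent radical (the paper phrases this via the unipotent radical of a parabolic, you via a maximal unipotent subgroup-scheme contained in a Borel, which amounts to the same thing), and both conclude by the $\mathbf{G}(k)$-conjugacy of Borel subgroups. No gap.
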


\begin{proof}
Let $U$ be a unipotent subgroup of $\mathbf{G}(k)$. Since $\mathbf{G}_k$ is a simply connected semisimple $k$-group, the group $U$ is $k$-embeddable in the unipotent radical $\mathcal{R}_u(\mathbf{P}_k)$ of a $k$-parabolic subgroup $\mathbf{P}_k$ of $\mathbf{G}_k$.
This result is a consequence of \cite[Cor. 3.7]{BoTi2}, when $\mathbb{F}$ has characteristic zero, and of~\cite[Thm. 2]{G} and Lemma~\ref{lemma indices for perfect fields}, in positive characteristic.
In other words, if $\mathbb{F}$ is a perfect field, then $U$ is contained in $\mathcal{R}_u(\mathbf{P}_k)(k)$.
Moreover, since $\mathcal{R}_u(\mathbf{P}_k)$ is contained in the unipotent radical of some Borel subgroup $\mathbf{B}'_k$ of $\mathbf{G}_k$, and two Borel subgroups are $\mathbf{G}(k)$-conjugate, we conclude that $U \subseteq h^{-1} \mathbf{U}^{+}(k) h$, for some $h \in \mathbf{G}(k)$.
\end{proof}

\begin{comment}

\begin{theorem}\cite[Th. 2.2]{BLQuotients}\label{recall prop ideal contained}
Let $A$ be a Dedekind domain and let $k$ be its fraction field.
For every $\alpha \in \Phi$ and every $h \in H$, there exists a non-zero ideal $J_\alpha(h)$ of $A$ such that 
$$J_\alpha(h) \subseteq \theta_{\alpha}^{-1}\big( \mathbf{U}_{\alpha}(k) \cap h \mathbf{G}(A) h^{-1}\big).$$
\end{theorem}

Since $\mathcal{O}_{\mathcal{S}}$ is Dedekind domain whose fraction field is $k$, the next result follows.

\begin{corollary}\label{prop ideal contained}
For every $\alpha \in \Phi$ and every $h \in H$, there exists a non-zero ideal $J_\alpha(h)$ of $\mathcal{O}_{\mathcal{S}}$ such that 
$J_\alpha(h) \subseteq \theta_{\alpha}^{-1}\big( \mathbf{U}_{\alpha}(k) \cap h \mathbf{G}(\mathcal{O}_{\mathcal{S}}) h^{-1}\big).$
\end{corollary}
\end{comment}

Let $G \subset \mathbf{G}(k)$ be an $\mathcal{S}$-arithmetic subgroup of $\mathbf{G}(k)$.
For each $\alpha \in \Phi^{+}$, let ${\theta_{\alpha}: \mathbf{G}_a \to \mathbf{U}_{\alpha}}$ be the $\mathbb{Z}$-isomorphism given by the Chevalley pinning as in \S~\ref{section recall alg groups and BT}.
For each $h\in \mathbf{G}(k)$, we denote by $G_{\alpha}(h)$ and by $N_{\alpha}(h)$, the groups:
\begin{align*}
G_{\alpha}(h) :=& h^{-1} \mathbf{U}_{\alpha}(k) h \cap G,\\
N_{\alpha}(h):=&\theta_{\alpha}^{-1}\big(h G_{\alpha}(h) h^{-1}\big) = \theta_{\alpha}^{-1}\big( \mathbf{U}_{\alpha}(k) \cap h G h^{-1} \big).
\end{align*}
For $G=\mathbf{G}(\mathcal{O}_{\mathcal{S}})$ we write $M_{\alpha}(h):=N_{\alpha}(h)$.
\begin{lemma}\label{Lemma M_a}
For any $h \in \mathbf{G}(k)$ and any $\alpha \in \Phi^{+}$, the group $N_{\alpha}(h)$ is infinite.
\end{lemma}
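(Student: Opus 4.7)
The plan is to reduce the statement to the case $G=\mathbf{G}(\mathcal{O}_\mathcal{S})$, i.e.\ to show that $M_\alpha(h)$ is infinite, and then to exhibit a whole nonzero ideal of $\mathcal{O}_\mathcal{S}$ sitting inside $M_\alpha(h)$ under the identification $\theta_\alpha$.

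I first carry out the reduction. Since $G$ and $\mathbf{G}(\mathcal{O}_\mathcal{S})$ are commensurable, $hGh^{-1} \cap h\mathbf{G}(\mathcal{O}_\mathcal{S})h^{-1}$ has finite index in each of $hGh^{-1}$ and $h\mathbf{G}(\mathcal{O}_\mathcal{S})h^{-1}$. Intersecting with $\mathbf{U}_\alpha(k)$ shows that $\theta_\alpha(N_\alpha(h)) \cap \theta_\alpha(M_\alpha(h))$ has finite index in both $\theta_\alpha(N_\alpha(h))$ and $\theta_\alpha(M_\alpha(h))$. In particular, $N_\alpha(h)$ is infinite if and only if $M_\alpha(h)$ is, so it suffices to treat $G=\mathbf{G}(\mathcal{O}_\mathcal{S})$.

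Next, I invoke the closed embedding $\rho : \mathbf{G} \hookrightarrow \mathrm{SL}_{n,\mathbb{Z}}$ recalled in the Introduction, following \cite[1.4.5]{BT2}. Because $\theta_\alpha$ is a $\mathbb{Z}$-morphism and $\mathbf{U}_\alpha$ is unipotent, there exist an integer $N \geq 1$ and matrices $A_1, \ldots, A_N \in \mathrm{M}_n(\mathbb{Z})$ such that
\[
\rho\bigl(\theta_\alpha(x)\bigr) = I_n + \sum_{i=1}^{N} x^i A_i \qquad \text{for every } x \in k.
\]
Conjugating, $\rho\bigl(h^{-1}\theta_\alpha(x)h\bigr) = I_n + \sum_{i=1}^{N} x^i B_i$ with $B_i = \rho(h)^{-1} A_i \rho(h) \in \mathrm{M}_n(k)$. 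Since finitely many matrices $B_i$ have all their entries in $k = \mathrm{Quot}(\mathcal{O}_\mathcal{S})$, a common denominator can be cleared: there exists a nonzero $d \in \mathcal{O}_\mathcal{S}$ with $dB_i \in \mathrm{M}_n(\mathcal{O}_\mathcal{S})$ for every $i=1,\ldots,N$.

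Finally, for any $y \in \mathcal{O}_\mathcal{S}$ and $x = dy$ we have $x^i B_i = d^{i-1} y^i (d B_i) \in \mathrm{M}_n(\mathcal{O}_\mathcal{S})$ for every $i \geq 1$, whence $\rho\bigl(h^{-1}\theta_\alpha(x)h\bigr) \in \mathrm{SL}_n(\mathcal{O}_\mathcal{S})$. Since $\rho$ is a closed embedding over $\mathbb{Z}$, one has $\mathbf{G}(\mathcal{O}_\mathcal{S}) = \rho^{-1}\bigl(\mathrm{SL}_n(\mathcal{O}_\mathcal{S})\bigr) \cap \mathbf{G}(k)$, so $h^{-1}\theta_\alpha(x)h \in \mathbf{G}(\mathcal{O}_\mathcal{S})$, giving the inclusion $d\,\mathcal{O}_\mathcal{S} \subseteq M_\alpha(h)$. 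The ring $\mathcal{O}_\mathcal{S}$ is infinite because its fraction field $k$ is a function field over $\mathbb{F}$, hence infinite; so $d\,\mathcal{O}_\mathcal{S}$ is infinite and therefore so is $M_\alpha(h)$. By the commensurability reduction, $N_\alpha(h)$ is infinite as well. The only mildly delicate point is the uniform denominator clearing: a single $d$ must simultaneously control each $x^i B_i$, and the identity $x^i B_i = d^{i-1} y^i (d B_i)$ is exactly what makes one common $d$ suffice rather than a family $d_i$ growing with $i$.
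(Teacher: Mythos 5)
Your proof is correct. The commensurability reduction at the start is essentially the same as the paper's: the paper works with a common finite-index subgroup $G^{\natural} \subseteq G \cap \mathbf{G}(\mathcal{O}_{\mathcal{S}})$ and bounds $\big[M_{\alpha}(h):M^{\natural}_{\alpha}(h)\big]$ and $\big[N_{\alpha}(h):M^{\natural}_{\alpha}(h)\big]$ by the corresponding group indices, while your intersection argument accomplishes the same thing; either way one concludes that $M_\alpha(h)$ and $N_\alpha(h)$ are commensurable. Where you genuinely diverge is in the key step. The paper simply cites Prop.~6.5 of its companion paper for the fact that $M_\alpha(h)$ contains a nonzero ideal of $\mathcal{O}_{\mathcal{S}}$, whereas you reprove this from scratch: you expand $\rho(\theta_\alpha(x)) = I_n + \sum_{i} x^i A_i$ with $A_i \in \mathrm{M}_n(\mathbb{Z})$ (valid because $\rho \circ \theta_\alpha$ is a $\mathbb{Z}$-morphism sending $0$ to the identity), conjugate by $\rho(h)$, and clear a single common denominator $d$, the identity $x^i B_i = d^{i-1}y^i(dB_i)$ for $x = dy$ being exactly what lets one $d$ work for all powers simultaneously. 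Together with the equality $\mathbf{G}(\mathcal{O}_{\mathcal{S}}) = \mathbf{G}(k) \cap \rho^{-1}\big(\mathrm{SL}_n(\mathcal{O}_{\mathcal{S}})\big)$ already recorded in the introduction, this yields the explicit principal ideal $d\,\mathcal{O}_{\mathcal{S}} \subseteq M_\alpha(h)$, from which infiniteness is immediate since $\mathcal{O}_{\mathcal{S}}$ is an infinite ring. Your route makes the lemma self-contained at the cost of invoking the $\mathbb{Z}$-closed embedding; the paper's route is shorter but outsources precisely this ideal-containment to an external reference. Both arguments are sound.
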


\begin{proof}
By commensurability of $G$ with $\mathbf{G}(\mathcal{O}_{\mathcal{S}})$, there exists
 $G^{\natural} \subseteq \mathbf{G}(\mathcal{O}_{\mathcal{S}}) \cap G$ such that the indices $[G:G^{\natural}]$ and $[\mathbf{G}(\mathcal{O}_{\mathcal{S}}): G^{\natural}]$ are finite.
Let us write $M^{\natural}_{\alpha}(h):= \theta_{\alpha}^{-1}\big( \mathbf{U}_{\alpha}(k) \cap h G^{\natural} h^{-1} \big)$.
Let $\lbrace x_i \rbrace_{i \in \mathcal{I}}$ be a system of representatives of $M_{\alpha}(h)/ M^{\natural}_{\alpha}(h)$.
By definition of $M_{\alpha}(h)$, the element $g_i=h^{-1} \theta_{\alpha}(x_i)h$ belongs to $\mathbf{G}(\mathcal{O}_{\mathcal{S}})$.
If $g_i g_j^{-1}$ belongs to $G^{\natural}$, then $h^{-1}\theta_{\alpha}(x_i-x_j)h=g_i g_j^{-1} \in G^{\natural}$, or equivalently $x_i-x_j \in M^{\natural}_{\alpha}(h)$, whence $i=j$.
Thus $\big[M_{\alpha}(h): M^{\natural}_{\alpha}(h)\big] \leq \big[\mathbf{G}(\mathcal{O}_{\mathcal{S}}): G^{\natural}\big]$.
An analogous argument shows that $\big[N_{\alpha}(h): M^{\natural}_{\alpha}(h)\big] \leq \big[G: G^{\natural}\big].$
Thus, we have that $M_{\alpha}(h)$ and $N_{\alpha}(h)$ are commensurable.
Since $\mathcal{O}_{\mathcal{S}}$ is an integral domain, it follows from~\cite[Prop. 6.5]{BravoLoisel} that $M_\alpha(h)$ contains a non-zero $\mathcal{O}_{\mathcal{S}}$-ideal.
Thus, we conclude that $N_{\alpha}(h)$ is infinite.
\end{proof}

\begin{lemma}\label{lemma cont of unipotents}
Let $h_1, h_2 \in \mathbf{G}(k)$. Then, the group $h_1^{-1} \mathbf{U}^{+}(k) h_1 \cap G$ is contained in $h_2^{-1} \mathbf{U}^{+}(k) h_2 \cap G $ if and only if $h_2 h_1^{-1} \in \mathbf{B}(k)$.
In particular, if the equivalent conditions are satisfied, then
$$h_1^{-1} \mathbf{U}^{+}(k) h_1 \cap G = h_2^{-1} \mathbf{U}^{+}(k) h_2 \cap G.$$
\end{lemma}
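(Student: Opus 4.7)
The plan is to dispose of the ``if'' direction first. If $b := h_2 h_1^{-1} \in \mathbf{B}(k)$, then since $\mathbf{B}$ normalises its unipotent radical $\mathbf{U}^+$, conjugation by $b$ preserves $\mathbf{U}^+(k)$, and hence $h_1^{-1} \mathbf{U}^+(k) h_1 = h_2^{-1} \mathbf{U}^+(k) h_2$. Intersecting with $G$ simultaneously yields the containment and the stronger equality asserted in the ``in particular'' clause.

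For the converse, I would assume the containment and set $g := h_2 h_1^{-1}$; the aim is $g \in \mathbf{B}(k)$. The key ingredient is Lemma~\ref{Lemma M_a}: for each $\alpha \in \Phi^+$, the additive subgroup $N_\alpha(h_1) \subset k$ is infinite. For every $x \in N_\alpha(h_1)$, the element $h_1^{-1} \theta_\alpha(x) h_1$ lies in $h_1^{-1}\mathbf{U}^+(k) h_1 \cap G$, so by hypothesis it lies in $h_2^{-1} \mathbf{U}^+(k) h_2$, giving $g\, \theta_\alpha(x)\, g^{-1} \in \mathbf{U}^+(k)$ for every $x \in N_\alpha(h_1)$. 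I would then pass to Zariski closures in $\mathbf{G}_k$: since any proper algebraic subgroup of $\mathbb{G}_{a,k}$ has only finitely many $k$-points, the infinite set $N_\alpha(h_1)$ is Zariski dense in $\mathbb{G}_{a,k}$, and hence $\theta_\alpha(N_\alpha(h_1))$ is Zariski dense in $\mathbf{U}_\alpha$. Likewise, $k$ being infinite, $\mathbf{U}^+(k)$ is Zariski dense in the smooth connected group $\mathbf{U}^+$. Taking closures in the displayed inclusion yields $g\, \mathbf{U}_\alpha\, g^{-1} \subseteq \mathbf{U}^+$ for every $\alpha \in \Phi^+$, and because the root subgroups $\mathbf{U}_\alpha$ with $\alpha \in \Phi^+$ generate $\mathbf{U}^+$, we deduce $g \mathbf{U}^+ g^{-1} \subseteq \mathbf{U}^+$, with equality by a dimension argument. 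Hence $g$ normalises $\mathbf{U}^+$, and since $\mathrm{N}_{\mathbf{G}}(\mathbf{U}^+) = \mathbf{B}$ as $\mathbb{Z}$-group schemes, we conclude $g \in \mathbf{B}(k)$.

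The delicate step is turning the set-theoretic inclusion supplied by the hypothesis into the scheme-theoretic inclusion $g\, \mathbf{U}_\alpha\, g^{-1} \subseteq \mathbf{U}^+$; this relies crucially on the fact that $G$ is $\mathcal{S}$-arithmetic, which through Lemma~\ref{Lemma M_a} supplies enough elements in each conjugate root subgroup to force Zariski density. Once this density is secured, the remainder is standard structure theory for split reductive groups, and the identification of $\mathrm{N}_{\mathbf{G}}(\mathbf{U}^+)$ with $\mathbf{B}$ closes the argument.
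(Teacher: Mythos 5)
Your proof is correct, and while the ``if'' direction and the essential input (the infinitude of the groups $N_\alpha(h_1)$ from Lemma~\ref{Lemma M_a}) coincide with the paper's, your converse follows a genuinely different route. The paper writes $\tau=h_2h_1^{-1}=b'wb$ via the Bruhat decomposition, expresses conjugation by $b$ in the coordinates $\prod_{\alpha\in\Phi^+}\theta_\alpha$ of $\mathbf{U}^+$ as a tuple of nonzero polynomials $P_\alpha$, and rules out $w\neq\mathrm{id}$ by showing that some $P_{\alpha_0}$ would have to vanish on the product $\prod_{\alpha}N_\alpha(h_1)$ of infinite sets. You instead work one root group at a time: you upgrade the set-theoretic inclusion $g\,\theta_\alpha\big(N_\alpha(h_1)\big)\,g^{-1}\subseteq\mathbf{U}^+(k)$ to the scheme-theoretic inclusion $g\,\mathbf{U}_\alpha\,g^{-1}\subseteq\mathbf{U}^+$ by Zariski density of an infinite subset of $\mathbb{A}^1(k)$ --- which is exactly where the infinitude of $N_\alpha(h_1)$ enters, playing the same role as the vanishing-polynomial argument --- and then conclude from $g\,\mathbf{U}^+g^{-1}=\mathbf{U}^+$ and the identification $\mathrm{N}_{\mathbf{G}}(\mathbf{U}^+)=\mathbf{B}$. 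The trade-off is clear: your version avoids the Bruhat decomposition and the explicit polynomials $P_\alpha$, giving a shorter and more conceptual argument, at the price of invoking the standard fact that the Borel subgroup is the full normalizer of its unipotent radical (which itself requires a short argument with parabolics and simple reflections); the paper's version is more self-contained and explicitly coordinate-based. Both hinge on the same arithmetic density supplied by Lemma~\ref{Lemma M_a}, and both are valid.
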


\begin{proof}
Firstly, assume that $h_2 h_1^{-1} \in \mathbf{B}(k)$. Since $\mathbf{B}(k)$ normalizes $\mathbf{U}^{+}(k)$, we have $h_2 h_1^{-1} \mathbf{U}^{+}(k) h_1 h_2^{-1}  = \mathbf{U}^{+}(k)$. 
This implies that $h_2^{-1} \mathbf{U}^{+}(k) h_2 =h_1^{-1} \mathbf{U}^{+}(k) h_1$.
Thus, by intersecting both sides of the preceding equation with $G$, we get
$$h_1^{-1} \mathbf{U}^{+}(k) h_1 \cap G = h_2^{-1} \mathbf{U}^{+}(k) h_2 \cap G.$$

Now, we prove the converse implication. Let us write $\tau=h_2 h_1^{-1}$, so that $\tau (\mathbf{U}^{+}(k) \cap h_1 G h_1^{-1}) \tau^{-1}$ is contained in $\mathbf{U}^{+}(k) \cap h_2 G h_2^{-1}$.
For each $\alpha \in \Phi^{+}$, we can write $\mathbf{U}_{\alpha}(k) \cap h_1 G h_1^{-1}=\lbrace \theta_{\alpha}(z): z \in  N_{\alpha}(h_1) \rbrace$. Then, we have
\[ \tau \left\lbrace \prod_{\alpha \in \Phi^{+}} \theta_{\alpha}(x_{\alpha}) : x_{\alpha} \in N_{\alpha}(h_1) \right\rbrace \tau^{-1} \subseteq \mathbf{U}^{+}(k) \cap h_2 G h_2^{-1}.\]
%whence we get
%$$ \tau \left( \prod_{\alpha \in \Phi^{+}} \theta_{\alpha}(x_{\alpha}) \right) \tau^{-1} \in \mathbf{U}^{+}(k), \quad \forall x_{\alpha} \in M^{\natural}_{\alpha}(h_1), \, \,  \forall \alpha \in \Phi^{+}.$$
By the Bruhat decomposition, let us write $\tau=b'wb$, where $b,b' \in \mathbf{B}(k)$ and $w \in N^{\mathrm{sph}}$, where $N^{\mathrm{sph}}$ is a lift of $W^{\mathrm{sph}} =\mathrm{N}_{\mathbf{G}(k)}(\mathbf{T}(k))/\mathbf{T}(k)$ to the group $\mathrm{N}_{\mathbf{G}(k)}(\mathbf{T}(k))$. Since $b'$ normalizes $\mathbf{U}^{+}(k)$, we deduce that:
\[ b \left( \prod_{\alpha \in \Phi^{+}} \theta_{\alpha}(x_{\alpha}) \right) b^{-1} \in \prod_{\alpha \in \Phi^{+}} \mathbf{U}_{w^{-1} \cdot \alpha}(k), \quad \forall (x_{\alpha})_{\alpha \in \Phi^+} \in  \prod_{\alpha \in \Phi^+} N_{\alpha}(h_1).\]
Assume, for the sake of contradiction, that $w \neq \mathrm{id}$.
Let $\alpha_0$ be a positive root such that the root $w^{-1} \cdot \alpha_0$ is negative.
Then, the $\alpha_0$-coordinate of $b \left( \prod_{\alpha \in \Phi^{+}} \theta_{\alpha}(x_{\alpha}) \right) b^{-1}$ equals zero, for all $(x_{\alpha})_{\alpha \in \Phi^{+}} \in \prod_{\alpha \in \Phi^{+}} N_{\alpha}(h_1)$. 
Since the linear algebraic $k$-group $\mathbf{B}$ normalizes the subgroup $\mathbf{U}^{+}$, the conjugacy map defined from $b \in \mathbf{B}(k)$ induces a scheme automorphism $\psi_b$ on $\mathbf{U}^{+}$. Moreover, since $\mathbf{U}^{+}$ has a parametrization (as $k$-variety)
$\prod_{\beta \in \Phi^{+}} \theta_\beta:  \prod_{\beta \in \Phi^{+}} \mathbb{G}_{a,k} \to \mathbf{U}^{+}$, the automorphism $\psi_b$ induces an automorphism $\phi_b$ on $\prod_{\beta \in \Phi^{+}} \mathbb{G}_{a,k}$. 
Let $\phi_b^{*}$ be the ring automorphism $\phi_b^{*}: \mathbb{F}[X_\beta | \beta \in \Phi^{+}] \to \mathbb{F}[X_\beta | \beta \in \Phi^{+}]$ induced by $\phi_b$.
For each $\alpha \in \Phi^{+}$, let us denote by $P_{\alpha}$ the polynomial in $\card\left(\Phi^{+}\right)$ indeterminates defined as the image of $X_\alpha$ by the ring automorphism $\phi_b^*$.
This polynomial depends on $b$ and on the ordering of factors of $\beta \in \Phi^{+}$.
Since each $X_{\alpha}$ is non-zero, we get that each polynomial $P_{\alpha}$ is non-zero.
Moreover, by definition of $P_\alpha$, we have that:
\[ b \left(\prod_{\alpha \in \Phi^{+}} \theta_{\alpha}(z_{\alpha}) \right) b^{-1}=\prod_{\alpha \in \Phi^{+}} \theta_{\alpha} \Big( P_{\alpha}\big((z_{\alpha})_{\alpha \in \Phi^{+}}\big) \Big) , \quad \forall(z_{\alpha})_{\alpha \in \Phi^{+}} \in \prod_{\alpha \in \Phi^{+}}\mathbb{G}_a. \]
Thus $ P_{\alpha_0}((x_{\alpha})_{\alpha \in \Phi^{+}})=0$, for all $(x_{\alpha})_{\alpha \in \Phi^{+}} \in \prod_{\alpha \in \Phi^{+}} N_{\alpha}(h_1)$.
In other words, we get that $P_{\alpha_0}$ vanishes on the product $\prod_{\alpha \in \Phi^{+}} N_{\alpha}(h_1)$, where any $N_{\alpha}(h_1)$ is infinite according to~Lemma~\ref{Lemma M_a}. Since $P_{\alpha_0} \neq 0$, we get a contradiction.
Therefore, we conclude that $w=\mathrm{id}$, whence $\tau=h_2h_1^{-1}$ belongs to $\mathbf{B}(k)$.
\end{proof}

We now turn to the proof of Theorem~\ref{main teo 1}.

\begin{comment}
\begin{theorem}\label{teo all maximal unipotent subgroups}
Let $G$ be an $\mathcal{S}$-arithmetic subgroup of $\mathbf{G}(k)$.
For each $\D \in \SecD$, let $h_D \in \mathbf{G}(k)$ be an arbitrary element such that $h_D \cdot \D=\D_0$.
Let us write 
$G_D=\mathrm{Stab}_G(\D)=h_D^{-1} \mathbf{B}(k) h_D \cap G$,
and 
$U(G_D)=h_D^{-1} \mathbf{U}^{+}(k) h_D \cap G$.
Then:
\begin{itemize}
\item[(1)] $U(G_D)$ is the subgroup of all the unipotent elements in $G_D$, and
\item[(2)] $\mathfrak{U}:=\left\lbrace U(G_D) : \D \in \SecD \right\rbrace$ is the set of all the maximal unipotent subgroups of $G$.
\end{itemize}
\end{theorem}
\end{comment}

\begin{proof}[Proof of Theorem~\ref{main teo 1}] 
We begin by proving Statement (1).
Indeed, let $u$ be a unipotent element of $G_{D}$. Since $u$ belongs to $h_D^{-1} \mathbf{B}(k) h_D$, there exists a unique pair $(\tilde{t}, \tilde{u}) \in h_D^{-1} \mathbf{T}(k) h_D \times h_D^{-1} \mathbf{U}^{+}(k) h_D $ such that $u= \tilde{t} \cdot \tilde{u}$, since a Borel subgroup is a semi-direct product of a maximal $k$-torus with its unipotent radical, according to~\cite[Prop. 21.13 (ii)]{BoA}.
Moreover, since $u$  is unipotent $\tilde{t}=\mathrm{id}$, whence $u \in h_D^{-1} \mathbf{U}^{+}(k) h_D$. 
Thus $u \in U(G_D)$.
Hence, we conclude that $U(G_{D})$ is the subgroup of all the unipotent elements in $G_D$. This proves Statement~(1).

Now, we prove Statement (2).
Indeed, let $U \subseteq G$ be a unipotent subgroup containing $U(G_{D})$.
It follows from Lemma~\ref{lemma unipotents in G} that $U$ is contained in $h^{-1} \mathbf{U}^{+}(k) h \cap G$, for some $h \in \mathbf{G}(k)$.
Hence
$$ h_D^{-1} \mathbf{U}^+(k) h_D \cap G = U(G_{D}) \subseteq U \subseteq h^{-1} \mathbf{U}^{+}(k) h \cap G.$$
Then, it follows from Lemma~\ref{lemma cont of unipotents} that the preceding inclusion becomes the equality
$$U(G_{D}) = U = h^{-1} \mathbf{U}^{+}(k) h \cap G.$$
We conclude that $U(G_{D})$ is a maximal unipotent subgroup of $G$.
Let $U$ be a maximal unipotent subgroup of $G$.
It follows from Lemma~\ref{lemma unipotents in G} that $U$ is contained in $h^{-1} \mathbf{U}^{+}(k) h \cap G$, for some $h \in \mathbf{G}(k)$.
We set $\D=h^{-1} \cdot \D_0$, so that $U(G_{D})$ equals $h^{-1} \mathbf{U}^{+}(k) h \cap G$.
In particular, we get $U \subseteq U(G_{D})$.
Moreover, since the unipotent group $U$ is assumed to be maximal, we conclude that $U=U(G_{D})$.
Thus, Statement~(2) follows.

It remains to prove Statement~(3).
Set $\mathfrak{U}/G=\lbrace U(G_{\sigma}) : \sigma \in \Sigma \rbrace$ as in Theorem~\ref{main teo 1}, and
let $U(G_{D})= h_D^{-1} \mathbf{U}^{+}(k) h_D \cap G$ be an element of $\mathfrak{U}$.
Then, by definition of the set $\lbrace \D_{\sigma} \rbrace_{\sigma \in \Sigma}$, there exists $\sigma \in \Sigma$ such that $\D=g \cdot \D_{\sigma}$, for some $g \in G$.
Therefore $U(G_{D})=g U(G_{\sigma}) g^{-1}$.
This proves that each $G$-conjugacy class in $\mathfrak{U}$ contains an element of $\mathfrak{U}/G$.
%Thus, it only remains to prove that, given two groups $U(G_{\sigma_1})$ and $U(G_{\sigma_2})$, with $\sigma_1 \neq \sigma_2$, they are not $G$-conjugate.

Conversely, let $\sigma_1,\sigma_2 \in \Sigma$ such that there exists an element $g \in G$ satisfying $ g U(G_{\sigma_1}) g^{-1}= U(G_{\sigma_2})$.
Let us write $D=g \cdot D_{\sigma_1}$.
Let $h$ be an element of $\mathbf{G}(k)$ such that $\D= h^{-1} \cdot \D_0$.
Then $U(G_D)= h^{-1} \mathbf{U}^{+}(k) h \cap G$.
Since $gh_{D_1}^{-1} \cdot \D_0=\D=h^{-1} \cdot \D_0$, we get $hgh_{D_1}^{-1} \in \mathrm{Stab}_{\mathbf{G}(k)}\left( \D_0 \right)$. Thus, it follows from Lemma~\ref{lemma stab of D_0} that $hgh_{D_1}^{-1}$ belongs to $\mathbf{B}(k)$.
Since $\mathbf{B}(k)$ normalizes $\mathbf{U}^{+}(k)$, we get:
\begin{equation}\label{eq U et h}
U(G_D) = h^{-1} \mathbf{U}^{+}(k) h \cap G= \big( g  h_{D_1}^{-1} \big) \mathbf{U}^{+}(k) \big( h_{D_1}g^{-1} \big) \cap G. 
\end{equation}
Moreover, since $g$ belongs to $G$, we have $g^{-1} G g=G$. Thus, Equality~\eqref{eq U et h} becomes
$$U(G_D) = g \left(  h_{D_1}^{-1} \mathbf{U}^{+}(k) h_{D_1} \cap G \right) g^{-1}=
g U(G_{\sigma_1}) g^{-1} =U(G_{\sigma_2}). $$
Therefore, Lemma~\ref{lemma cont of unipotents} applied to $U(G_D) = U(G_{\sigma_2})$ implies that $h_{D_2} h^{-1} \in \mathbf{B}(k)$,
whence we deduce that $\D= h^{-1} \cdot \D_0=h_{D_2}^{-1} \cdot \D_0=\D_{\sigma_2}$.
In other words, we conclude $g \cdot \D_{\sigma_1}=\D_{\sigma_2}$. This implies that $\sigma_1=\sigma_2$, by definition of the set of representatives $\lbrace \D_{\sigma} \rbrace_{\sigma \in \Sigma}$. Thus, Statement~(3) follows.
\end{proof}

\section{\texorpdfstring{$\mathbf{G}(\mathcal{O}_\mathcal{S})$}{G(OS)}-conjugacy classes of Borel \texorpdfstring{$k$}{k}-subgroups}\label{section prove of 2.2}

The main goal of this section is to prove Theorem~\ref{main teo 3}.
By definition of $\SecD$, for any $\D \in \SecD$, there exists $h \in \mathbf{G}(k)$ such that $\D=h^{-1} \cdot \D_0$.
Since $\mathrm{Stab}_{\mathbf{G}(k)}\left( \D_0 \right)=\mathbf{B}(k)$, there exists a $\mathbf{G}(k)$-equivariant bijection from $\SecD$ to the Borel variety $\mathbf{G}(k) /\mathbf{B}(k)$.
In particular, the set of $\mathbf{G}(\mathcal{O}_{\mathcal{S}})$-orbits in $\SecD$ is in bijection with the double quotient $\mathbf{G}(\mathcal{O}_{\mathcal{S}}) \backslash \mathbf{G}(k) /\mathbf{B}(k)$.

We begin this section by interpreting the double quotients $\mathbf{G}(\mathcal{O}_{\mathcal{S}}) \backslash \mathbf{G}(k) /\mathbf{B}(k)$ as the kernel of an homomorphism between two \'Etale cohomology groups.
This result is valid even when $\mathbf{G}_k$ is a quasi-split reductive $k$-group.

\begin{proposition}\label{prop number of cusps equals some kernell}
Let $\phi : H^1_{\text{\'et}}\big(\mathrm{Spec}(\mathcal{O}_{\mathcal{S}}),\mathbf{T}\big) \to H^1_{\text{\'et}}\big(\mathrm{Spec}(\mathcal{O}_{\mathcal{S}}),\mathbf{G}\big)$ be the map defined from the natural exact sequence $1 \to \mathbf{T} \to \mathbf{G} \to \mathbf{G}/\mathbf{T} \to 1$. There exists a bijective map from $\mathbf{G}(\mathcal{O}_{\mathcal{S}}) \backslash \mathbf{G}(k) /\mathbf{B}(k)$ to $\mathrm{ker}(\phi)$.
\end{proposition}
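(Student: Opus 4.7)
The plan is to realize the double coset as a kernel in $H^1_{\text{\'et}}$ of $\mathbf{B}$ and then to reduce from $\mathbf{B}$ to $\mathbf{T}$ via the vanishing of the \'etale cohomology of the unipotent radical $\mathbf{U}^+$.

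First, starting from the short exact sequence of $\mathrm{Spec}(\mathcal{O}_{\mathcal{S}})$-group schemes $1 \to \mathbf{B} \to \mathbf{G} \to \mathbf{G}/\mathbf{B} \to 1$, the long exact sequence of pointed sets in non-abelian \'etale cohomology, together with the usual twisting construction, yields a bijection
$$\mathbf{G}(\mathcal{O}_{\mathcal{S}}) \backslash (\mathbf{G}/\mathbf{B})(\mathcal{O}_{\mathcal{S}}) \simeq \ker\bigl(H^1_{\text{\'et}}(\mathcal{O}_{\mathcal{S}}, \mathbf{B}) \to H^1_{\text{\'et}}(\mathcal{O}_{\mathcal{S}}, \mathbf{G})\bigr).$$
To identify the left-hand side with $\mathbf{G}(\mathcal{O}_{\mathcal{S}}) \backslash \mathbf{G}(k)/\mathbf{B}(k)$, I would observe that the flag scheme $\mathbf{G}/\mathbf{B}$ is smooth and projective over $\mathbb{Z}$, so that the valuative criterion of properness applied at each closed point of the Dedekind scheme $\mathrm{Spec}(\mathcal{O}_{\mathcal{S}})$ gives $(\mathbf{G}/\mathbf{B})(\mathcal{O}_{\mathcal{S}}) = (\mathbf{G}/\mathbf{B})(k)$; and the quasi-split hypothesis, via Borel--Tits, ensures that all $k$-Borel subgroups of $\mathbf{G}_k$ are $\mathbf{G}(k)$-conjugate, whence $(\mathbf{G}/\mathbf{B})(k) = \mathbf{G}(k)/\mathbf{B}(k)$.

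Second, I would replace $\mathbf{B}$ by $\mathbf{T}$ using the exact sequence $1 \to \mathbf{U}^+ \to \mathbf{B} \to \mathbf{T} \to 1$. The group $\mathbf{U}^+$ admits a composition series whose successive quotients are isomorphic to $\mathbb{G}_a$, and since $\mathrm{Spec}(\mathcal{O}_{\mathcal{S}})$ is affine one has $H^i_{\text{\'et}}(\mathcal{O}_{\mathcal{S}}, \mathbb{G}_a) = H^i(\mathcal{O}_{\mathcal{S}}, \mathcal{O}_{\mathcal{S}}) = 0$ for $i \geq 1$. A d\'evissage then yields $H^i_{\text{\'et}}(\mathcal{O}_{\mathcal{S}}, \mathbf{U}^+) = 0$ for $i \in \{1, 2\}$, and the associated long exact sequence provides a bijection $H^1_{\text{\'et}}(\mathcal{O}_{\mathcal{S}}, \mathbf{B}) \simeq H^1_{\text{\'et}}(\mathcal{O}_{\mathcal{S}}, \mathbf{T})$. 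Concatenating these identifications, and observing that the resulting map to $H^1_{\text{\'et}}(\mathcal{O}_{\mathcal{S}}, \mathbf{G})$ agrees with $\phi$ by functoriality of the inclusions $\mathbf{T} \hookrightarrow \mathbf{B} \hookrightarrow \mathbf{G}$, gives the desired bijection.

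The step I expect to be most delicate is the identification $(\mathbf{G}/\mathbf{B})(k) = \mathbf{G}(k)/\mathbf{B}(k)$, which genuinely requires the quasi-split hypothesis: in general the natural comparison map is only injective, with cokernel measured by $\ker(H^1(k, \mathbf{B}) \to H^1(k, \mathbf{G}))$, so this is precisely where quasi-splitness rather than arbitrary reductivity enters. The non-abelian twisting of the first step and the d\'evissage on $\mathbf{U}^+$ in the last step are, by contrast, standard cohomological formalities once the geometric input is in place.
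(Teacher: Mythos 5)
Your proposal is correct and follows essentially the same route as the paper: the long exact sequence of non-abelian \'etale cohomology for $1 \to \mathbf{B} \to \mathbf{G} \to \mathbf{G}/\mathbf{B} \to 1$, the identification $(\mathbf{G}/\mathbf{B})(\mathcal{O}_{\mathcal{S}}) = (\mathbf{G}/\mathbf{B})(k)$ by the valuative criterion of properness, the identification $(\mathbf{G}/\mathbf{B})(k) = \mathbf{G}(k)/\mathbf{B}(k)$ via Borel--Tits, and the reduction from $\mathbf{B}$ to $\mathbf{T}$. The only cosmetic difference is that you establish $H^1_{\text{\'et}}(\mathcal{O}_{\mathcal{S}},\mathbf{B}) \simeq H^1_{\text{\'et}}(\mathcal{O}_{\mathcal{S}},\mathbf{T})$ by d\'evissage through the $\mathbb{G}_a$-filtration of $\mathbf{U}^+$, whereas the paper cites this directly from SGA3 (Exp.~XXVI, Cor.~2.3) together with the comparison of fppf and \'etale cohomology for smooth group schemes.
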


\begin{proof}
Firstly, we show that $(\mathbf{G}/\mathbf{B})(\mathcal{O}_{\mathcal{S}})= (\mathbf{G}/\mathbf{B})(k)$ (cf.~\cite[\S 4, Prop. 1.6]{Liu}).
Indeed, let $x \in (\mathbf{G}/\mathbf{B})(k)$ be a $k$-rational point of $\mathbf{G}/\mathbf{B}$.
By the valuative criterion of properness (cf.~\cite[\S 4, Prop. 1.6]{Liu}), for any prime ideal $\p'$ of $\mathcal{O}_{\mathcal{S}}$ there exists a unique element $x_{\p'} \in (\mathbf{G}/\mathbf{B})((\mathcal{O}_{\mathcal{S}})_{\p'})$ such that $x=\operatorname{Res}(x_{\p'})$.
Since the functor of points $\mathfrak{h}_V$ of $V=\mathbf{G}/\mathbf{B}$ is faithfully flat, we can assume that $x_{\p'} \in V((\mathcal{O}_{\mathcal{S}})_{f_{\p'}})$ where $f_{\p'} \notin \p'$.
Recall that $\operatorname{Spec}(\mathcal{O}_{\mathcal{S}})$ can be covered by a finite set $\left\lbrace\operatorname{Spec}((\mathcal{O}_{\mathcal{S}})_{f_{\p'_i}})\right\rbrace_{i=1}^n$.
Hence, by a patching argument, we find $\overline{x} \in V(\mathcal{O}_{\mathcal{S}})$ such that $x_{\p'_i}=\operatorname{Res}(\overline{x})$, and then $x=\operatorname{Res}(\overline{x})$.
This element is unique by local considerations. Thus, we have shown that $(\mathbf{G}/\mathbf{B})(\mathcal{O}_{\mathcal{S}})= (\mathbf{G}/\mathbf{B})(k)$.

Now, let us consider the following exact sequence of algebraic varieties:
$$
1 \rightarrow \mathbf{B} \xrightarrow{\iota} \mathbf{G} \xrightarrow{p} \mathbf{G}/\mathbf{B} \rightarrow 1.
$$
It follows from~\cite[Ch. III, \S 4, 4.6]{DG} that there exists a long exact sequence
\[
1 \to \mathbf{B}(\mathcal{O}_{\mathcal{S}}) \rightarrow \mathbf{G}(\mathcal{O}_{\mathcal{S}}) \rightarrow (\mathbf{G}/\mathbf{B})(\mathcal{O}_{\mathcal{S}}) \rightarrow H^1_{\text{\'et}}\big(\operatorname{Spec}(\mathcal{O}_{\mathcal{S}}), \mathbf{B}\big) \xrightarrow{} H^1_{\text{\'et}}\big(\operatorname{Spec}(\mathcal{O}_{\mathcal{S}}),\mathbf{G}\big).
\]
Moreover, it follows from~\cite[Exp.~XXVI, Cor.~2.3]{SGA3-3} that \[H^1_{\text{fppf}}\big(\operatorname{Spec}(\mathcal{O}_{\mathcal{S}}), \mathbf{B}\big)= H^1_{\text{fppf}}\big(\operatorname{Spec}(\mathcal{O}_{\mathcal{S}}), \mathbf{T}\big).\]
But, since $\mathbf{B}$ and $\mathbf{T}$ are both smooth over $\mathrm{Spec}(\mathcal{O}_{\mathcal{S}})$, we have
$$H^1_{\text{fppf}}\big(\operatorname{Spec}(\mathcal{O}_{\mathcal{S}}),\mathbf{B}\big) = H^1_{\text{\'et}}\big(\operatorname{Spec}(\mathcal{O}_{\mathcal{S}}),\mathbf{B}\big),$$
$$H^1_{\text{fppf}}\big(\operatorname{Spec}(\mathcal{O}_{\mathcal{S}}), \mathbf{T}\big)=H^1_{\text{\'et}}\big(\operatorname{Spec}(\mathcal{O}_{\mathcal{S}}), \mathbf{T}\big).$$
This implies that $H^1_{\text{\'et}}\big(\operatorname{Spec}(\mathcal{O}_{\mathcal{S}}), \mathbf{B}\big)= H^1_{\text{\'et}}\big(\operatorname{Spec}(\mathcal{O}_{\mathcal{S}}), \mathbf{T}\big)$. Thus, the previous long exact sequence becomes:
$$
1 \rightarrow \mathbf{B}(\mathcal{O}_{\mathcal{S}}) \rightarrow \mathbf{G}(\mathcal{O}_{\mathcal{S}}) \rightarrow (\mathbf{G}/\mathbf{B})(k) \rightarrow H^1_{\text{\'et}}\big(\operatorname{Spec}(\mathcal{O}_{\mathcal{S}}), \mathbf{T}\big) \rightarrow H^1_{\text{\'et}}\big(\operatorname{Spec}(\mathcal{O}_{\mathcal{S}}),\mathbf{G}\big).
$$
%\sout{This implies that $\ker \left( H^1_{\text{\'et}}(\operatorname{Spec}(\mathcal{O}_{\mathcal{S}}), \mathbf{T}) \to H^1_{\text{\'et}}(\operatorname{Spec}(\mathcal{O}_{\mathcal{S}}),\mathbf{G})\right) \cong \mathbf{G}(\mathcal{O}_{\mathcal{S}}) \backslash (\mathbf{G}/\mathbf{B})(k).$}
Then, it follows from~\cite[Ch. III, \S 4, 4.7]{DG} that there exists a bijection between $\mathbf{G}(\mathcal{O}_{\mathcal{S}}) \backslash (\mathbf{G}/\mathbf{B})(k)$ and the kernel:
$$\ker \left( H^1_{\text{\'et}}(\operatorname{Spec}(\mathcal{O}_{\mathcal{S}}), \mathbf{T}) \to H^1_{\text{\'et}}(\operatorname{Spec}(\mathcal{O}_{\mathcal{S}}),\mathbf{G})\right).$$
Therefore, since $(\mathbf{G}/\mathbf{B})(k) \cong \mathbf{G}(k)/\mathbf{B}(k)$ according to~\cite[\S 4.13(a)]{BoTi}, the result follows.
\end{proof}

\begin{corollary}\label{corollary cusps number in terms of pic}
Assume that $\mathbf{G}_k$ is a split simply connected semisimple $k$-group.
Let $\mathbf{t}$ be the rank of $\mathbf{G}_k$.
Then, there is a one-to-one correspondence between $\mathbf{G}(\mathcal{O}_{\mathcal{S}}) \backslash \mathbf{G}(k) /\mathbf{B}(k)$ and $\operatorname{Pic}(\mathcal{O}_{\mathcal{S}})^\mathfrak{t}$.
\end{corollary}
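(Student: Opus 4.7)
The plan is to combine Proposition~\ref{prop number of cusps equals some kernell} with two cohomological computations: one to identify the domain of $\phi$ with $\operatorname{Pic}(\mathcal{O}_{\mathcal{S}})^{\mathfrak{t}}$, and one to show that $\phi$ is the trivial map, so that $\ker(\phi)$ is the whole group. Since Proposition~\ref{prop number of cusps equals some kernell} already gives a bijection between $\mathbf{G}(\mathcal{O}_{\mathcal{S}}) \backslash \mathbf{G}(k)/\mathbf{B}(k)$ and $\ker(\phi)$, there is nothing left to do about the double coset side.

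For the first cohomological computation, I would use that $\mathbf{G}$ is split, so its maximal $\mathbb{Z}$-torus $\mathbf{T}$ is isomorphic to $\mathbb{G}_{m,\mathbb{Z}}^{\mathfrak{t}}$ where $\mathfrak{t}$ is the dimension of $\mathbf{T}$. Since \'etale cohomology commutes with finite products of group schemes, this yields
\begin{equation*}
H^1_{\text{\'et}}\bigl(\operatorname{Spec}(\mathcal{O}_{\mathcal{S}}),\mathbf{T}\bigr) \;\cong\; H^1_{\text{\'et}}\bigl(\operatorname{Spec}(\mathcal{O}_{\mathcal{S}}),\mathbb{G}_{m}\bigr)^{\mathfrak{t}} \;\cong\; \operatorname{Pic}(\mathcal{O}_{\mathcal{S}})^{\mathfrak{t}},
\end{equation*}
where the last identification is the standard interpretation of $H^1_{\text{\'et}}(-,\mathbb{G}_m)$ as the Picard group of the scheme (a consequence of Hilbert's Theorem~90 and faithfully flat descent).

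For the second point, the task is to prove that $H^1_{\text{\'et}}(\operatorname{Spec}(\mathcal{O}_{\mathcal{S}}),\mathbf{G})$ is trivial as a pointed set, because then the kernel of $\phi$ coincides with all of $H^1_{\text{\'et}}(\operatorname{Spec}(\mathcal{O}_{\mathcal{S}}),\mathbf{T})$. This is the crucial and only nontrivial ingredient: here I would invoke Harder's theorem on the vanishing of the first non-abelian Galois cohomology for a split simply connected semisimple Chevalley group over the ring of $\mathcal{S}$-integers of a function field (equivalently, the Hasse principle for such groups combined with the triviality of $H^1$ at each local completion). Concretely, every $\mathbf{G}$-torsor over $\operatorname{Spec}(\mathcal{O}_{\mathcal{S}})$ is Zariski-locally trivial, and the simply connected split assumption forces it to be globally trivial.

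Combining these three ingredients — the bijection of Proposition~\ref{prop number of cusps equals some kernell}, the identification $H^1_{\text{\'et}}(\operatorname{Spec}(\mathcal{O}_{\mathcal{S}}),\mathbf{T}) \cong \operatorname{Pic}(\mathcal{O}_{\mathcal{S}})^{\mathfrak{t}}$, and the triviality of $H^1_{\text{\'et}}(\operatorname{Spec}(\mathcal{O}_{\mathcal{S}}),\mathbf{G})$ — gives the claimed bijection. The main (in fact, the only) obstacle is locating and correctly citing the vanishing result for $H^1_{\text{\'et}}(\operatorname{Spec}(\mathcal{O}_{\mathcal{S}}),\mathbf{G})$ in the precise form needed, since the ground field $\mathbb{F}$ is only assumed perfect (not finite), so one must be sure that Harder-type results apply in this generality; the \emph{split} and \emph{simply connected} hypotheses on $\mathbf{G}$ are exactly what make this step valid.
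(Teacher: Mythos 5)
Your first two ingredients match the paper exactly: the bijection with $\ker(\phi)$ comes from Proposition~\ref{prop number of cusps equals some kernell}, and the identification $H^1_{\text{\'et}}\big(\operatorname{Spec}(\mathcal{O}_{\mathcal{S}}),\mathbf{T}\big) \cong \operatorname{Pic}(\mathcal{O}_{\mathcal{S}})^{\mathbf{t}}$ follows from $\mathbf{T} \cong \mathbb{G}_{m,\mathbb{Z}}^{\mathbf{t}}$ and Hilbert's Theorem~90. The gap is in your third step. You reduce everything to the vanishing of the full \'etale set $H^1_{\text{\'et}}\big(\operatorname{Spec}(\mathcal{O}_{\mathcal{S}}),\mathbf{G}\big)$, and you yourself flag that you do not know a reference valid when $\mathbb{F}$ is merely perfect. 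This is not a citation problem but a real obstruction: the Hasse-principle route you sketch needs $H^1(k,\mathbf{G})=1$, which holds for global fields but fails for split simply connected groups over general fields (already $H^1(k,\mathrm{Spin}_n)$ can be nontrivial once $k$ has large cohomological dimension, which happens here for suitable perfect $\mathbb{F}$). So "every $\mathbf{G}$-torsor over $\operatorname{Spec}(\mathcal{O}_{\mathcal{S}})$ is Zariski-locally trivial" is not available in the stated generality, and the vanishing of the full \'etale $H^1$ of $\mathbf{G}$ should not be expected.

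The paper's proof avoids this entirely by never leaving Zariski cohomology on the target side. Since Hilbert~90 gives $H^1_{\text{Zar}}\big(\operatorname{Spec}(\mathcal{O}_{\mathcal{S}}),\mathbf{T}\big) = H^1_{\text{\'et}}\big(\operatorname{Spec}(\mathcal{O}_{\mathcal{S}}),\mathbf{T}\big)$, every class in the source of $\phi$ is Zariski-locally trivial, hence so is its image; therefore $\ker(\phi)$ equals the kernel of $H^1_{\text{Zar}}\big(\operatorname{Spec}(\mathcal{O}_{\mathcal{S}}),\mathbf{T}\big) \to H^1_{\text{Zar}}\big(\operatorname{Spec}(\mathcal{O}_{\mathcal{S}}),\mathbf{G}\big)$. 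One then only needs the triviality of $H^1_{\text{Zar}}\big(\operatorname{Spec}(\mathcal{O}_{\mathcal{S}}),\mathbf{G}\big)$, i.e.\ that a Zariski-locally trivial torsor under a simply connected semisimple group scheme over a Dedekind domain is trivial; this is Harder's theorem \cite[Th.~2.2.1 and Cor.~2.3.2]{H1}, which requires no finiteness hypothesis on $\mathbb{F}$. Replacing your vanishing claim for the full \'etale $H^1$ by this restriction-to-Zariski argument closes the gap; the rest of your proposal is correct as written.
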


\begin{proof}
It follows from Hilbert's Theorem~90 (cf.~\cite[Ch. III, Prop. 4.9]{Mi}) that: 
\[H^1_{\text{Zar}}\big(\operatorname{Spec}(\mathcal{O}_{\mathcal{S}}), \mathbb{G}_m\big) = H^1_{\text{\'et}}\big(\operatorname{Spec}(\mathcal{O}_{\mathcal{S}}), \mathbb{G}_m\big) \cong \operatorname{Pic}(\mathcal{O}_{\mathcal{S}}).\]
Since $\mathbf{T}$ is split over $\mathbb{Z}$, we have that $\mathbf{T} \cong \mathbb{G}_{m,\mathbb{Z}}^{\mathbf{t}}$. 
Thus, we get:
$$H^1_{\text{Zar}}\big(\operatorname{Spec}(\mathcal{O}_{\mathcal{S}}), \mathbf{T}\big)= H^1_{\text{\'et}}\big(\operatorname{Spec}(\mathcal{O}_{\mathcal{S}}), \mathbf{T}\big) \cong \operatorname{Pic}(\mathcal{O}_{\mathcal{S}})^\mathbf{t}.$$
Since $\phi \Big(H^1_{\text{Zar}}(\operatorname{Spec}(\mathcal{O}_{\mathcal{S}}), \mathbf{T})\Big) \subseteq H^1_{\text{Zar}}\big(\operatorname{Spec}(\mathcal{O}_{\mathcal{S}}), \mathbf{G}\big)$ and $H^1_{\text{Zar}}\big(\operatorname{Spec}(\mathcal{O}_{\mathcal{S}}), \mathbf{T}\big)$ equals $ H^1_{\text{\'et}}\big(\operatorname{Spec}(\mathcal{O}_{\mathcal{S}}), \mathbf{T}\big)$, we have:
\[
\mathrm{ker}(\phi) 
=\ker \Big( H^1_{\text{Zar}}\big(\operatorname{Spec}(\mathcal{O}_{\mathcal{S}}), \mathbf{T}\big)  \to H^1_{\text{Zar}}\big(\operatorname{Spec}(\mathcal{O}_{\mathcal{S}}),\mathbf{G}\big)\Big).
\]
Moreover, since $\mathbf{G}_k$ is a simply connected semisimple $k$-group and $\mathcal{O}_{\mathcal{S}}$ is a Dedekind domain, it follows from~\cite[Thm. 2.2.1 \& Cor. 2.3.2]{H1} that $H^1_{\text{Zar}}(\operatorname{Spec}(\mathcal{O}_{\mathcal{S}}),\mathbf{G})$ is trivial, for any integral model $\mathbf{G}$ of $\mathbf{G}_k$.
We conclude that
$$\ker(\phi) = H^1_{\text{Zar}}\big(\operatorname{Spec}(\mathcal{O}_{\mathcal{S}}),\mathbf{T}\big) \cong  \operatorname{Pic}(\mathcal{O}_{\mathcal{S}})^\mathbf{t},$$
whence the result follows.
\end{proof}

\section{Fixing groups of germs at infinity of chambers}\label{section fixing}

In order to prove Theorem~\ref{main teo 2} we have to decompose the semisimple group $G_D/U(G_D)$ as the product of a free $\mathbb{Z}$-module of finite rank and a ``arithmetic bounded torus'' $T$, which is a torsion group whenever $\mathbb{F}$ is finite.
In this section, we describe the group $T$ in terms of the stabilizer of a certain filter, which we precise in the following.

The sector chambers of $\mathbb{A}_0$ with direction $D_0$ form a basis of a filter $\gamma(D_0)$, according to~\cite[\S 7.2.3]{BT1}.
The germ associated to this basis is called the germ of $\mathbb{A}_0$ with direction $D_0 = \left( D_{0,\p} \right)_{\p \in \mathcal{S}}$.
For each $\p \in \mathcal{S}$, it induces a germ of $\mathbb{A}_{0,\p}$ with direction $D_{0,\p}$.
Since the set of $\mathbf{G}(k_\p)$-pointwise stabilizer subgroups of those filters is a directed set according to~\cite[\S 7.2.2]{BT1},
the union of these pointwise stabilizers over a basis of the filter $\gamma(D_{0,\p})$ forms a group.
The same holds for pointwise stabilizers of those filters in any subgroup of $\mathbf{G}(k_\p)$.
Thus, the union of the pointwise stabilizers in $\mathbf{G}(k_p)$ of the sector chambers with direction $D_{0,\p}$ is a group, which we call the pointwise stabilizer in $\mathbf{G}(k_\p)$ of the filter $\gamma(D_{0,\p})$.
We denote it by:
\[\widehat{P}^\p_{\gamma(D_{0,\p})}:=
\bigcup_{x \in \mathbb{A}_{0,\p}} \fix_{\mathbf{G}(k_\p)}\big(Q(x_\p,D_{0,\p})\big).
\]
This group decomposes as:
\begin{equation}\label{eq decomposition of stab of germ}
\widehat{P}^\p_{\gamma(D_{0,\p})}=\mathbf{T}(\mathcal{O}_\p) \mathbf{U}^+(k_\p),
\end{equation}
whenever $\mathbf{G}$ is semisimple~\cite[\S 7.2.3]{BT1} and split\footnote{The groups $H = \mathbf{T}(k_\p)_b$, $H^0$ and $H^1 = \mathbf{T}(\mathcal{O}_\p)$ defined in \cite[\S 4.6.3]{BT2} are the same since $\mathbf{T}$ is a smooth connected integral model over $\mathcal{O}_\p$ of a maximal (split) torus as being defined over $\mathbb{Z}$ (see~\cite[\S 4.6.32]{BT2}).}.

Analogously, given an apartment $\mathbb{A}$ and a vector chamber $D$, we define a filter $\gamma(D)$ given by the basis of sector chambers of $\mathbb{A}$ with direction $D$.
The pointwise stabilizer of $\gamma(D)$ is defined as the union of the pointwise stabilizers of sector chambers in $\mathbb{A}$ with direction $D$.

Let $\D \in \SecD$ and $h_D \in \mathbf{G}(k)$ such that $\D=h_D^{-1} \cdot \D_0$.
Let $\mathbb{A}$ be the apartment of $\X$ defined as 
\begin{equation}\label{eq appatment}
\mathbb{A}=h_D^{-1} \cdot \mathbb{A}_0.
\end{equation}
We want to consider the pointwise stabilizer in $G$ of the filter $\gamma(D) = \big(\gamma(D_\p)\big)_{\p \in \mathcal{S}}$ generated by the germ of $\mathbb{A}$ with direction $D$.

For each $\p \in \mathcal{S}$, the pointwise stabiliser of $\gamma(D_\p)$ in $G$ is 
\[\bigcup_{x_\p \in \mathbb{A}_\p} \fix_G\big( Q(x_\p,D_\p) \big) = h_D^{-1} \Big( \bigcup_{y_\p \in \mathbb{A}_{0,\p}}\fix_{\mathbf{G}(k_\p)}\big(Q(y_\p,D_{0,\p})\big) \cap h_D G h_D^{-1} \Big) h_D.\]
Thus, the pointwise stabiliser of $\gamma(D)$ in $G$ is
\[
G_{\gamma(D)}:=\bigcap_{\p \in \mathcal{S}} h_D^{-1} \Big( \bigcup_{y_\p \in \mathbb{A}_{0,\p}}\fix_{\mathbf{G}(k_\p)}\big(Q(y_\p,D_{0,\p})\big) \cap h_D G h_D^{-1} \Big) h_D.
\]
Since $\mathcal{S}$ is finite, we have
\[ G_{\gamma(D)} = h_D^{-1} \Big( \bigcup_{y = (y_\p)_{\p \in \mathcal{S}} \in \mathbb{A}_{0}} \bigcap_{\p \in \mathcal{S}} \fix_{\mathbf{G}(k_\p)}\big(Q(y_\p,D_{0,\p})\big) \cap h_D G h_D^{-1} \Big) h_D.
\]
Moreover, since 
\[\bigcap_{\p \in \mathcal{S}} \Big( \fix_{\mathbf{G}(k_\p)}\big(Q(y_\p,D_{0,\p})\big) \cap h_D G h_D^{-1} \Big) = \fix_{h_D G h_D^{-1}} \Big(Q\big((y_\p)_{\p \in \mathcal{S}},D_0\big)\Big),\]
we deduce that 

\begin{equation}\label{eq G germ of D}
G_{\gamma(D)} = \bigcup_{y \in \mathbb{A}} \fix_G\big(Q(y,D)\big).  
\end{equation}

For any $y\in \mathbb{A}$, the pointwise stabilizer $\mathrm{Fix}_{G}(Q(y,D))$ is contained in $G_D=\mathrm{Stab}_G(\D)$.
Thus, the group $G_{\gamma(D)}$ is contained in $G_D$, which equals $h_D^{-1} \mathbf{B}(k) h_D \cap G$.
Using the semi-direct product decomposition of a Borel subgroup into a maximal torus and its unipotent radical, for any $g \in G_{\gamma(D)}$ there exists a unique pair $(t,u) \in \big( h_D^{-1} \mathbf{T}(k) h_D \big) \times \big( h_D^{-1} \mathbf{U}^{+}(k) h_D \big)$ such that $g=t \cdot u$.
Then, we define:
\begin{equation}\label{eq T germ}
T \left(G_{\gamma(D)} \right):= \left \lbrace t \in h_D^{-1} \mathbf{T}(k) h_D: \exists u \in h_D^{-1} \mathbf{U}^{+}(k) h_D, \, \,  t \cdot u \in G_{\gamma(D)} \right\rbrace,
\end{equation}
and
\begin{equation}\label{eq U germ}
U \left(G_{\gamma(D)} \right):= h_D^{-1} \mathbf{U}^{+}(k) h_D \cap G_{\gamma(D)}.
\end{equation}
In \S~\ref{section proof of 2.3} we prove that the ``arithmetic bounded torus'' $T$ introduced in Theorem~\ref{main teo 3} is exactly the group $h_D T \left(G_{\gamma(D)} \right) h_D^{-1}$.
The following result allows us to describe each group $T \left(G_{\gamma(D)} \right)$ as a quotient of $G_{\gamma(D)}$.

\begin{lemma}\label{lemma torus-unip}
The map $f: G_{\gamma(D)} \to h_D^{-1} \mathbf{T}(k) h_D$ defined by $f(tu)=t$ is a group homomorphism and it induces a group isomorphism $G_{\gamma(D)}/ U\left( G_{\gamma(D)} \right) \cong T\left( G_{\gamma(D)} \right)$.
\end{lemma}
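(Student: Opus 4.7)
The plan is to exploit the semi-direct product decomposition of the (conjugate) Borel subgroup. Since $\mathbf{B} = \mathbf{T} \ltimes \mathbf{U}^+$ as $k$-groups (\cite[21.13]{BoA}), conjugation by $h_D$ yields $h_D^{-1} \mathbf{B}(k) h_D = \bigl( h_D^{-1} \mathbf{T}(k) h_D \bigr) \ltimes \bigl( h_D^{-1} \mathbf{U}^{+}(k) h_D \bigr)$. Because $G_{\gamma(D)} \subseteq \mathrm{Stab}_G(D) = h_D^{-1} \mathbf{B}(k) h_D \cap G$, each $g \in G_{\gamma(D)}$ admits a unique decomposition $g = tu$ with $t \in h_D^{-1} \mathbf{T}(k) h_D$ and $u \in h_D^{-1} \mathbf{U}^{+}(k) h_D$. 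This uniqueness makes the rule $f(tu) = t$ well-defined as a set map from $G_{\gamma(D)}$ to $h_D^{-1} \mathbf{T}(k) h_D$.

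To verify $f$ is a group homomorphism, I would take $g_i = t_i u_i \in G_{\gamma(D)}$ for $i \in \{1,2\}$ and expand
\[ g_1 g_2 = t_1 u_1 t_2 u_2 = (t_1 t_2) \bigl( (t_2^{-1} u_1 t_2)\, u_2 \bigr). \]
Since $h_D^{-1} \mathbf{T}(k) h_D$ normalizes $h_D^{-1} \mathbf{U}^{+}(k) h_D$, the element $t_2^{-1} u_1 t_2$ still lies in $h_D^{-1} \mathbf{U}^{+}(k) h_D$, and hence so does its product with $u_2$. By the uniqueness of the semi-direct decomposition, the torus-part of $g_1 g_2$ is $t_1 t_2$, which gives $f(g_1 g_2) = f(g_1) f(g_2)$.

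Finally I would identify the image and kernel of $f$. Equation~\eqref{eq T germ} gives directly $\mathrm{Im}(f) = T(G_{\gamma(D)})$. The kernel consists of those $g = tu \in G_{\gamma(D)}$ whose torus-part is trivial, which by uniqueness means precisely $g \in h_D^{-1} \mathbf{U}^{+}(k) h_D \cap G_{\gamma(D)} = U(G_{\gamma(D)})$ by Equation~\eqref{eq U germ}. The first isomorphism theorem then yields $G_{\gamma(D)}/U(G_{\gamma(D)}) \cong T(G_{\gamma(D)})$, establishing both that $T(G_{\gamma(D)})$ is a subgroup and that $U(G_{\gamma(D)})$ is normal in $G_{\gamma(D)}$ as by-products.

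The argument is essentially formal once the semi-direct product structure is in place, so no step should present a serious obstacle; the only substantive input is the uniqueness of the torus--unipotent factorization inside $h_D^{-1} \mathbf{B}(k) h_D$ over the field $k$.
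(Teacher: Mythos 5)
Your proposal is correct and follows essentially the same route as the paper: both rely on the unique $\mathbf{T}\ltimes\mathbf{U}^+$ factorization inside the conjugated Borel subgroup, use the fact that $h_D^{-1}\mathbf{T}(k)h_D$ normalizes $h_D^{-1}\mathbf{U}^+(k)h_D$ to check that $f$ is a homomorphism, and then read off the kernel and image from Equations~\eqref{eq U germ} and~\eqref{eq T germ}. The only cosmetic difference is that the paper checks the homomorphism property on $g_1g_2^{-1}$ rather than $g_1g_2$.
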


\begin{proof}
Let $g_1= t_1 \cdot u_1$ and $g_2= t_2 \cdot u_2$, as above. 
Since $h_{D}^{-1} \mathbf{T}(k) h_{D}$ normalizes $h_{D}^{-1} \mathbf{U}^{+}(k) h_{D}$, we have $g_1 g_2^{-1}= t_1 u_1 u_2^{-1} t_2^{-1}=t_1 t_2^{-1} \tilde{ u}_1 \tilde{u}_2$, for some $\tilde{u}_1, \tilde{u}_2 \in h_{D}^{-1} \mathbf{U}^{+}(k) h_{D}$.
Thus, by uniqueness of the writing, we deduce that $f$ is a group homomorphism.
Note that, by definition, $\ker(f)=h_D^{-1} \mathbf{U}^{+}(k) h_D \cap G_{\gamma(D)}= U \left( G_{\gamma(D)} \right)$, and $\mathrm{Im}(f)=T \left( G_{\gamma(D)} \right)$. Thus, the result follows.
\end{proof}

The following lemmas allow us to describe the structure of the unipotent subgroup $U\left( G_{\gamma(D)} \right)$ of $G_{\gamma(D)}$.

\begin{lemma}\label{lemma unipotent fixes subsectors0}
For any $u \in \mathbf{U}^{+}(k)$
there exists a $k$-sector chamber $Q_{\p}=Q(y_{\p},D_{0,\p})$ in $\mathbb{A}_{0,\p}$ such that $u \in \mathrm{Fix}_{\mathbf{G}(k)}(Q_{\p})$.
\end{lemma}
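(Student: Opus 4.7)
The plan is to invoke the Bruhat--Tits filtration of the root groups provided by the valuation $\nu_\p$ and the Chevalley pinning $(\theta_\alpha)_{\alpha \in \Phi}$, and then to displace the apex of a sector chamber far enough in the direction $D_{0,\p}$ so that finitely many half-apartment conditions are simultaneously satisfied.

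More precisely, I would use that, for every $\alpha \in \Phi$ and every $x \in k^{\times}$, the element $\theta_\alpha(x) \in \mathbf{U}_\alpha(k) \subset \mathbf{G}(k_\p)$ pointwise fixes the half-apartment
\[
D_{\alpha,\nu_\p(x)} := \{\, y \in \mathbb{A}_{0,\p} : \alpha(y - x_{0,\p}) + \nu_\p(x) \geq 0 \,\}
\]
of $\mathbb{A}_{0,\p}$; this is a direct consequence of the valued root group datum structure on $\mathbf{G}(k_\p)$ recalled in \S\ref{section recall alg groups and BT} via~\cite[6.2.3(b), 7.4.4]{BT1}. Given $u \in \mathbf{U}^+(k)$, I would fix any ordering of $\Phi^+$ and write $u = \prod_{\alpha \in \Phi^+} \theta_\alpha(x_\alpha)$ for suitable $x_\alpha \in k$, using the product decomposition of $\mathbf{U}^+$ coming from the Chevalley pinning over $\mathbb{Z}$. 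Set $r_\alpha := \nu_\p(x_\alpha) \in \mathbb{Z}$ when $x_\alpha \neq 0$, and $r_\alpha := +\infty$ otherwise.

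Now I would choose any interior point $v_0 \in D_{0,\p}$, so that $\alpha(v_0) > 0$ for every simple (hence every positive) root $\alpha$, and pick a positive real $c$ large enough that $c\,\alpha(v_0) \geq -r_\alpha$ for every $\alpha \in \Phi^+$; this is a finite system of inequalities, all of which are satisfied for $c$ sufficiently large because each $\alpha(v_0)$ is strictly positive. Setting $y_\p := x_{0,\p} + c\,v_0 \in \mathbb{A}_{0,\p}$ and $Q_\p := Q(y_\p,D_{0,\p}) = y_\p + \overline{D_{0,\p}}$, any $y \in Q_\p$ satisfies, for every $\alpha \in \Phi^+$,
\[
\alpha(y - x_{0,\p}) \;=\; \alpha(y_\p - x_{0,\p}) + \alpha(y - y_\p) \;\geq\; c\,\alpha(v_0) \;\geq\; -r_\alpha,
\]
so $y \in D_{\alpha,r_\alpha}$. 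Hence each factor $\theta_\alpha(x_\alpha)$ pointwise fixes $Q_\p$, and therefore so does their product $u$, proving $u \in \fix_{\mathbf{G}(k)}(Q_\p)$.

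The only subtle point is producing a single sector apex that meets the half-apartment condition for all positive roots at once; this is immediate from the positivity of every $\alpha \in \Phi^+$ on $D_{0,\p}$, which lets a single dilation of $v_0$ absorb the finitely many valuations $r_\alpha$. No issue arises from the chosen ordering of $\Phi^+$: a different ordering would yield different scalars $x_\alpha$ with the same finite set of valuations, and the dilation argument applies verbatim.
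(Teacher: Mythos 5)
Your proof is correct and follows essentially the same route as the paper's: decompose $u$ as $\prod_{\alpha\in\Phi^+}\theta_\alpha(x_\alpha)$ via the Chevalley pinning, use the Bruhat--Tits valued root datum to see which points are fixed in terms of the valuations $\nu_\p(x_\alpha)$, and push the sector apex far enough into the direction $D_{0,\p}$ so that the finitely many inequalities $\alpha(\cdot)\geq -\nu_\p(x_\alpha)$ hold on the whole sector. The only cosmetic difference is that the paper invokes \cite[6.4.9]{BT1} to place the whole product group inside the stabilizer of a suitable vertex, whereas you argue factor-by-factor with the half-apartments fixed by each $\theta_\alpha(x_\alpha)$; both are instances of the same filtration argument.
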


\begin{proof}
Let us write $u\in \mathbf{U}^{+}(k)$ as $u=\prod_{\alpha \in \Phi^{+}} \theta_{\alpha}(x_{\alpha})$, for some $(x_{\alpha})_{\alpha \in \Phi^{+}} \in \prod_{\alpha \in \Phi^{+}} k$.
Let $v$ be a vertex in the standard apartment $\mathbb{A}_{0,\p}$ of $X(\mathbf{G},k,\p)$ defined in \S~\ref{section recall alg groups and BT}. According to~\cite[\S 6.4.9]{BT1}, the group $\prod_{\alpha \in \Phi} \theta_{\alpha}\left(\pi_{\p}^{-\alpha(v)} \mathcal{O}_{\p}\right)$ is contained in $\mathrm{Stab}_{\mathbf{G}(k_{\p})}(v)$.
In particular, the unipotent element $u$ is contained in $\mathrm{Stab}_{\mathbf{G}(k_{\p})}(v)$, for any vertex $v \in \mathbb{A}_{0,\p}$ such that $\alpha(v) \geq -\nu_{\p}(x_{\alpha})$.
This implies that $u$ fixes the complex 
$$E_{\p}:= \left\lbrace z \in \mathbb{A}_{0,\p} :  \alpha(z) > -\nu_{\p}(x_{\alpha}), \forall \alpha \in \Phi^{+} \right\rbrace.$$
Let $y_{\p} \in E_{\p}$. Then $Q(y_{\p}, D_{0,\p})$ is contained in $E_{\p}$. Thus, we conclude that $u$ fixes $Q(y_{\p}, D_{0,\p})$, as wished.
\end{proof}

\begin{lemma}\label{lemma unipotent fixes subsectors}
Let $\D \in \SecD$.
For any $g \in h_D^{-1}\mathbf{U}^{+}(k) h_D$
there exists a $k$-sector chamber $Q=Q(y,D)$, with $y \in \mathbb{A}=h_D^{-1} \cdot \mathbb{A}_0$, such that $g \in \mathrm{Fix}_{\mathbf{G}(k)}(Q)$.
\end{lemma}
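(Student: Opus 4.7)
The plan is to reduce to the previous Lemma \ref{lemma unipotent fixes subsectors0} by conjugation and then combine the local information across $\mathcal{S}$. Since $g \in h_D^{-1} \mathbf{U}^+(k) h_D$, write $g = h_D^{-1} u h_D$ for a unique $u \in \mathbf{U}^+(k)$. The action of $\mathbf{G}(k)$ on $\X$ being diagonal and by isometries, one has for any sector chamber $Q(y_0,D_0) \subset \mathbb{A}_0$ the identity
\[
h_D^{-1} \cdot Q(y_0, D_0) = Q(h_D^{-1} \cdot y_0, \, h_D^{-1} \cdot D_0) = Q(h_D^{-1} \cdot y_0, D) \subset h_D^{-1} \cdot \mathbb{A}_0 = \mathbb{A}.
\]
Hence it will suffice to produce $y_0 \in \mathbb{A}_0$ such that $u$ fixes $Q(y_0, D_0)$ pointwise, since then $g = h_D^{-1} u h_D$ will fix $Q(y, D)$ pointwise with $y = h_D^{-1} \cdot y_0 \in \mathbb{A}$.

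Next I would construct $y_0$ componentwise. Recall that $\mathbb{A}_0 = \prod_{\p \in \mathcal{S}} \mathbb{A}_{0,\p}$ and $D_0 = \prod_{\p \in \mathcal{S}} D_{0,\p}$, and that the diagonal action of $u \in \mathbf{U}^+(k) \subset \mathbf{G}(k)$ on $\X$ is the product of the actions of $u$ on each $X_\p$ through the inclusion $\mathbf{G}(k) \hookrightarrow \mathbf{G}(k_\p)$. Apply Lemma \ref{lemma unipotent fixes subsectors0} to $u$, viewed in $\mathbf{U}^+(k) \subset \mathbf{U}^+(k_\p)$, at each place $\p \in \mathcal{S}$: this yields a point $y_\p \in \mathbb{A}_{0,\p}$ such that $u$ fixes $Q(y_\p, D_{0,\p})$ pointwise in $X_\p$. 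Setting $y_0 := (y_\p)_{\p \in \mathcal{S}} \in \mathbb{A}_0$, one has $Q(y_0, D_0) = \prod_{\p \in \mathcal{S}} Q(y_\p, D_{0,\p})$, and $u$ fixes this product pointwise through its diagonal action.

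The main obstacle is essentially notational: being careful that the ``action of $h_D^{-1}$'' commutes correctly with the formation of sector chambers (which holds since $h_D^{-1}$ acts by a simplicial isometry preserving the product structure of apartments and of vector chambers), and that the single-place Lemma \ref{lemma unipotent fixes subsectors0} truly produces a point of $\mathbb{A}_{0,\p}$ whose associated sector is fixed by the whole element $u$ (and not merely by the image of $u$ in some $\mathbf{U}_\alpha(k_\p)$). This is fine because the proof of Lemma \ref{lemma unipotent fixes subsectors0} bounds all root coordinates of $u$ simultaneously, so the sector produced at each $\p$ is fixed by $u$ itself. Finally $\mathcal{S}$ being finite allows the componentwise construction to terminate in finitely many steps, giving the required $y \in \mathbb{A}$.
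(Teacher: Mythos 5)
Your proof is correct and follows essentially the same route as the paper's: conjugate $g$ by $h_D$ into $\mathbf{U}^{+}(k)$, apply Lemma~\ref{lemma unipotent fixes subsectors0} at each place $\p \in \mathcal{S}$, take the product sector chamber $\prod_{\p \in \mathcal{S}} Q(y_\p, D_{0,\p}) \subset \mathbb{A}_0$ fixed by $u$ under the diagonal action, and translate back by $h_D^{-1}$. Your closing remarks on the compatibility of the $h_D^{-1}$-action with sector chambers and on the single-place lemma bounding all root coordinates of $u$ simultaneously match the implicit content of the paper's argument.
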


\begin{proof}
Let $u = h_D g h_D^{-1} \in \mathbf{U}^{+}(k)$. We have to show that there exists $Q'=Q(y', D_0)$ with $y' \in \mathbb{A}_0$ such that $u \in \mathrm{Fix}_{\mathbf{G}(k)}(Q')$.
Indeed, it follows from Lemma~\ref{lemma unipotent fixes subsectors0} that there exists a sector chamber $Q_{\p}=Q(y_{\p},D_{0,\p})$ in $\mathbb{A}_{0,\p}$ such that $u \in \mathrm{Fix}_{\mathbf{G}(k)}(Q_{\p})$.
Let $Q'=\prod_{\p \in \mathcal{S}} Q_{\p}$ be a $k$-sector chamber of $\X$, which is contained in $\mathbb{A}_{0}$ by definition of $\mathbb{A}_0$ (cf.~\S~\ref{section recall alg groups and BT}).
Then, the direction of $Q'$ is $D_0$.
Moreover, since for any point $z=(z_{\p})_{\p \in \mathcal{S}}$ we have $\mathrm{Stab}_{\mathbf{G}(k)} (z)=\bigcap_{\p \in \mathcal{S}} \mathrm{Stab}_{\mathbf{G}(k)} (z_{\p})$, we get
$$\mathrm{Fix}_{\mathbf{G}(k)}(Q')= \bigcap_{\p \in \mathcal{S}}\mathrm{Fix}_{\mathbf{G}(k)}(Q_{\p}), $$
whence we conclude that $u$ belongs to $\mathrm{Fix}_{\mathbf{G}(k)}(Q')$.
Therefore, if we set $Q=h_D^{-1} \cdot Q'$, then we have $Q=Q(h_D^{-1}\cdot y', h_D^{-1} \cdot D_0)= Q(y,D)$, for $x=h_D^{-1}\cdot y' \in \mathbb{A}$, and $g \in \mathrm{Fix}_{\mathbf{G}(k)}(Q)$.
\end{proof}

\begin{proposition}\label{prop equal unuipotent part}
For any $\D \in \SecD$, we have 
$$U \left( G_{\gamma(D)} \right)= h_D^{-1}\mathbf{U}^{+}(k) h_D \cap G =U(G_D).$$
Moreover, $U \left( G_{\gamma(D)} \right)$ is the subgroup of all unipotent elements in $G_{\gamma(D)}$.
\end{proposition}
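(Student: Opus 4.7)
The plan is to prove the two displayed equalities and the ``moreover'' clause separately, each using material already assembled above.

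First, the equality $h_D^{-1}\mathbf{U}^{+}(k) h_D \cap G = U(G_D)$ is simply the definition of $U(G_D)$ recorded in Theorem~\ref{main teo 1} (and again in Theorem~\ref{teo all maximal unipotent subgroups}), so nothing is to be proved there. The first equality $U(G_{\gamma(D)}) = h_D^{-1}\mathbf{U}^{+}(k) h_D \cap G$ will be proved by double inclusion. The inclusion $\subseteq$ is immediate: Equation~\eqref{eq G germ of D} presents $G_{\gamma(D)}$ as a union of pointwise stabilizers inside $G$, so $G_{\gamma(D)} \subseteq G$; intersecting with $h_D^{-1}\mathbf{U}^{+}(k) h_D$ gives the claim.

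For the reverse inclusion, I take an arbitrary $g \in h_D^{-1}\mathbf{U}^{+}(k) h_D \cap G$ and apply Lemma~\ref{lemma unipotent fixes subsectors}: there exist $y \in \mathbb{A}$ and a $k$-sector chamber $Q = Q(y,D) \subseteq \mathbb{A}$ such that $g \in \fix_{\mathbf{G}(k)}(Q)$. Since $g \in G$, this upgrades to $g \in \fix_{G}(Q)$. Equation~\eqref{eq G germ of D} then expresses $G_{\gamma(D)} = \bigcup_{y \in \mathbb{A}} \fix_G\big(Q(y,D)\big)$, so $g \in G_{\gamma(D)}$, and therefore $g \in h_D^{-1}\mathbf{U}^{+}(k) h_D \cap G_{\gamma(D)} = U(G_{\gamma(D)})$ by the defining Equation~\eqref{eq U germ}.

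It remains to establish that $U(G_{\gamma(D)})$ exhausts the unipotent elements of $G_{\gamma(D)}$. One direction is clear: every element of $U(G_{\gamma(D)}) \subseteq h_D^{-1}\mathbf{U}^{+}(k) h_D$ is unipotent, since conjugation by $h_D^{-1}$ preserves unipotency. Conversely, let $g \in G_{\gamma(D)}$ be unipotent. As noted just before Equation~\eqref{eq T germ}, the group $G_{\gamma(D)}$ is contained in $\mathrm{Stab}_G(\D) = G_D$, because pointwise stabilizing the germ of $\mathbb{A}$ with direction $D$ entails stabilizing $\D$ as a chamber at infinity. Hence $g$ is a unipotent element of $G_D$, and Theorem~\ref{teo all maximal unipotent subgroups}(1) identifies those with $U(G_D) = h_D^{-1}\mathbf{U}^{+}(k) h_D \cap G$, which by the first part of the proposition coincides with $U(G_{\gamma(D)})$. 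Thus $g \in U(G_{\gamma(D)})$, completing the argument.

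No step presents a serious obstacle: the whole proof is a combination of the directed-union presentation~\eqref{eq G germ of D} of $G_{\gamma(D)}$, the geometric input from Lemma~\ref{lemma unipotent fixes subsectors} (unipotents fix deep enough subsectors), and the Borel decomposition already exploited in Theorem~\ref{teo all maximal unipotent subgroups}(1). The only delicate point to watch is logical: the moreover clause must not be invoked before the first equality is proved, since the argument I gave for the unipotent elements of $G_{\gamma(D)}$ uses the identity $U(G_D)=U(G_{\gamma(D)})$ already established.
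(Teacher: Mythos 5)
Your proposal is correct and follows essentially the same route as the paper: the inclusion $U(G_{\gamma(D)}) \subseteq h_D^{-1}\mathbf{U}^{+}(k)h_D \cap G$ is definitional, the reverse inclusion uses Lemma~\ref{lemma unipotent fixes subsectors} together with Equation~\eqref{eq G germ of D} exactly as in the paper, and your ``moreover'' argument differs only cosmetically (you route through Theorem~\ref{teo all maximal unipotent subgroups}(1) and the containment $G_{\gamma(D)} \subseteq G_D$, whereas the paper directly cites that a unipotent element of the Borel subgroup $h_D^{-1}\mathbf{B}(k)h_D$ lies in its unipotent radical — the same underlying fact).
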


\begin{proof}
Firstly,
it follows from its definition that the group $U \left( G_{\gamma(D)} \right)$ is contained in $h_D^{-1}\mathbf{U}^{+}(k) h_D \cap G$.
Conversely, let $u \in h_D^{-1}\mathbf{U}^{+}(k) h_D \cap G$ be an arbitrary element.
It follows from Lemma~\ref{lemma unipotent fixes subsectors} that $u$ fixes a $k$-sector chamber $Q(y,D)$, where $y \in \mathbb{A}$.
Thus, it follows from Equation~\eqref{eq G germ of D} that $g$ belongs to $G_{\gamma(D)}$.
Hence, we conclude that $u$ belongs to $h_D^{-1}\mathbf{U}^{+}(k) h_D \cap G_{\gamma(D)}$, which equals $U \left( G_{\gamma(D)} \right)$.

Let $u$ be a unipotent element of $G_{\gamma(D)}$.
Since $u$ is a unipotent element of the Borel subgroup $h_D^{-1} \mathbf{B}(k) h_D$, it is contained in its unipotent radical $h_D^{-1} \mathbf{U}^+(k) h_D$ (since it is defined over $k$ according to~\cite[Thm.~15.4]{BoA}).
Thus $u \in U \left( G_{\gamma(D)} \right)$.
Since $U \left( G_{\gamma(D)} \right)$ is unipotent, we conclude that $U \left( G_{\gamma(D)}\right)$ is the subgroup of all the unipotent elements in $G_{\gamma(D)}$.
\end{proof}

We now focus on the description of the semisimple group $T \left(G_{\gamma(D)} \right)$.

\begin{proposition}\label{prop torus in F}
Assume that $G \subseteq \mathbf{G}(\OS)$.
Then, for any $\D \in \SecD$, we have
$T \left( G_{\gamma(D)} \right) \subseteq h_D^{-1} \mathbf{T}(\mathbb{F}) h_D$. 
\end{proposition}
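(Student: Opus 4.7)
The plan is to show that for an arbitrary $t \in T(G_{\gamma(D)})$, the element $\tilde{t} := h_D t h_D^{-1} \in \mathbf{T}(k)$ lies in $\mathbf{T}(\mathcal{O}_\p)$ for every closed point $\p$ of $\mathcal{C}$. Using the split identification $\mathbf{T} \cong \mathbb{G}_m^\mathbf{t}$ together with the fact that $k^{*} \cap \bigcap_\p \mathcal{O}_\p^{*} = \mathbb{F}^{*}$ (which holds since $\mathcal{C}$ is smooth projective and $\mathbb{F}$ is algebraically closed in $k$), the conclusion $\tilde{t} \in \mathbf{T}(\mathbb{F})$ then follows coordinatewise. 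Fix $u$ so that $g := tu \in G_{\gamma(D)} \subseteq \mathbf{G}(\OS)$, and set $\tilde{u} = h_D u h_D^{-1} \in \mathbf{U}^{+}(k)$ and $\tilde{g} = h_D g h_D^{-1} = \tilde{t}\tilde{u} \in \mathbf{B}(k)$.

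For a place $\p \in \mathcal{S}$, the argument is building-theoretic: by Equation~\eqref{eq G germ of D}, $g$ pointwise fixes some sector chamber of $\mathbb{A} = h_D^{-1} \cdot \mathbb{A}_0$ with direction $D$. Conjugation by $h_D$ and projection onto the $\p$-factor shows that $\tilde{g}$, regarded in $\mathbf{G}(k_\p)$, lies in the germ stabilizer $\widehat{P}^\p_{\gamma(D_{0,\p})} = \mathbf{T}(\mathcal{O}_\p) \mathbf{U}^{+}(k_\p)$ of Equation~\eqref{eq decomposition of stab of germ}. The uniqueness of the decomposition $\mathbf{B}(k_\p) = \mathbf{T}(k_\p) \ltimes \mathbf{U}^{+}(k_\p)$ then forces $\tilde{t} \in \mathbf{T}(\mathcal{O}_\p)$.

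For a place $\p \notin \mathcal{S}$, no building of $\X$ is at our disposal; this is the main obstacle. To bypass it I will use a faithful closed embedding $\rho : \mathbf{G} \hookrightarrow \mathrm{SL}_{n,\mathbb{Z}}$ (provided by~\cite[1.4.5]{BT2}), so that $\rho(g) \in \mathrm{SL}_n(\OS)$ has monic characteristic polynomial over the Dedekind domain $\OS$, with roots integral over $\OS$ and hence in $\OS$ by integral closure, since they lie in $k = \mathrm{Quot}(\OS)$. After diagonalising $\rho|_\mathbf{T}$ in a basis whose weight spaces are ordered so that $\rho(\mathbf{B})$ becomes upper-triangular, the diagonal of $\rho(\tilde{g}) = \rho(\tilde{t})\rho(\tilde{u})$ consists of the weights $\chi_1(\tilde{t}), \ldots, \chi_n(\tilde{t})$ of $\rho$ (with $\chi_i \in X^{*}(\mathbf{T})$); these are therefore the eigenvalues of $\rho(g)$ by conjugation invariance. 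The same argument applied to $g^{-1} \in G_{\gamma(D)}$, whose $\mathbf{T}$-part is $t^{-1}$, yields $\chi_i(\tilde{t})^{-1} \in \OS$, so $\chi_i(\tilde{t}) \in \OS^{*} \subseteq \mathcal{O}_\p^{*}$.

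Since $\rho$ is a closed embedding, $\rho|_\mathbf{T}$ is injective and the $\chi_i$ span a finite-index subgroup of $X^{*}(\mathbf{T})$; taking $\nu_\p$-valuations then gives $\nu_\p(\chi(\tilde{t})) = 0$ for every $\chi \in X^{*}(\mathbf{T})$. Applied to the projection characters of $\mathbf{T} \cong \mathbb{G}_m^\mathbf{t}$, this yields $\tilde{t} \in \mathbf{T}(\mathcal{O}_\p)$ for $\p \notin \mathcal{S}$ as well. Combining both cases gives $\tilde{t} \in \bigcap_\p \mathbf{T}(\mathcal{O}_\p) = \mathbf{T}(\mathbb{F})$, and the conclusion $t \in h_D^{-1} \mathbf{T}(\mathbb{F}) h_D$ follows.
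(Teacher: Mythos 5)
Your proof is correct and follows the same two-pronged strategy as the paper's: control the torus part locally at the places of $\mathcal{S}$ via the building, and globally away from $\mathcal{S}$ via a faithful representation defined over $\mathbb{Z}$ together with the hypothesis $g \in \mathbf{G}(\OS)$. The differences are in the execution. At a place $\p \in \mathcal{S}$, you invoke the germ-stabilizer decomposition $\widehat{P}^\p_{\gamma(D_{0,\p})}=\mathbf{T}(\mathcal{O}_\p)\mathbf{U}^+(k_\p)$ of Equation~\eqref{eq decomposition of stab of germ} and uniqueness in $\mathbf{B}(k_\p)=\mathbf{T}(k_\p)\ltimes\mathbf{U}^+(k_\p)$ to get $h_D t h_D^{-1} \in \mathbf{T}(\mathcal{O}_\p)$ outright, whereas the paper only uses that $tu$ fixes a single vertex of the sector chamber and then appeals to the boundedness of parahoric subgroups together with~\cite[4.2.19]{BT2} to show that every character of $\mathbf{B}$ takes values in $\mathcal{O}_\p^{\times}$ on that vertex stabilizer; your variant is more self-contained but uses the full strength of the germ stabilizer rather than a single parahoric. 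Away from $\mathcal{S}$, you place the eigenvalues of $\rho(g)$ in $\OS$ by integral closedness of the Dedekind domain $\OS$ in $k$ (and in $\OS^{\times}$ by applying the same to $g^{-1}$) and only then intersect with the local conditions, while the paper first shows that the coefficients of the characteristic polynomial lie in $\OS \cap \bigcap_{\p \in \mathcal{S}} \mathcal{O}_\p = \mathbb{F}$ and uses that $\mathbb{F}$ is algebraically closed in $k$ to force the eigenvalues into $\mathbb{F}^{\times}$; these are two equivalent ways of exploiting that a function regular and invertible everywhere on $\mathcal{C}$ is a constant. Both routes end by converting ``$\chi(h_D t h_D^{-1}) \in \mathbb{F}^{\times}$ for all characters $\chi$'' into $h_D t h_D^{-1} \in \mathbf{T}(\mathbb{F})$ via the splitness of $\mathbf{T}$ over $\mathbb{Z}$; your observation that the weights of a faithful representation generate a finite-index (in fact the full) subgroup of $X^{*}(\mathbf{T})$ plays exactly the role of the surjectivity of $\rho^{*}$ in the paper. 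I see no gap in either half of your argument.
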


\begin{proof}
Let $t \in T \left( G_{\gamma(D)} \right)$ be an arbitrary element.
By definition of $T \left( G_{\gamma(D)} \right)$, there is $u \in h_D^{-1} \mathbf{U}^+(k) h_D$ such that $tu \in G_{\gamma(D)}$.
Moreover, by definition of $G_{\gamma(D)}$, the element $tu$ pointwise stabilizes a sector chamber $Q=Q(y,D)$ of $\mathbb{A}$.
In particular, $t u$ stabilizes a vertex $z \in Q$. 
Let us write $z=(z_{\p})_{\p \in \mathcal{S}}$, where $z_{\p} \in X(\mathbf{G},k,\p)$.
Since $\mathbf{G}(k)$ acts diagonally on $\X$, the element $tu$ stabilizes each coordinate $z_{\p}$ of $z$.

Consider the element $b=h_D \left( tu \right) h_D^{-1} \in \mathbf{G}(k)$.
Since $t \in h_D^{-1} \mathbf{T}(k) h_D$ and $u \in h_D^{-1} \mathbf{U}^{+}(k) h_D$, we have that $b \in \mathbf{B}(k)$.
Moreover, since $tu$ stabilizes each point $z_{\p}$, the element $b$ belongs to the parahoric subgroup $\widehat{P}_{z'_{\p}} = \stab_{\mathbf{G}(k_{\p})}(z'_{\p}) \subseteq \mathbf{G}(k_{\p})$, where $z'_{\p}=h_D \cdot z_{\p}$.
In other words, if we write $z'=h_D \cdot z$, then $b$ belongs to $\bigcap_{\p \in \mathcal{S}} \left( \widehat{P}_{z'_{\p}} \cap \mathbf{B}(k)\right).$
%Moreover, since $t \in h_D^{-1} \mathbf{T}(k) h_D$ and $u \in h_D^{-1} \mathbf{U}^{+}(k) h_D$ we have that $b \in \mathbf{B}(k)$.
%We deduce from Equality~\eqref{eq decomposition Psigma0} that there exists $z \in h \cdot Q(\sigma) \subseteq Q_0$ such that $b \in \stab_{G^h}(z) =  G^h \cap P_z$ where $P_z = \stab_{\mathbf{G}(K)}(z) \subseteq \mathbf{G}(K)$ denotes the parahoric subgroup associated to $z$.
In the Borel subgroup, write $b=sv$ with
$s \in \mathbf{T}(k)$ and $v \in \mathbf{U}^{+}(k)$.
By uniqueness of the pair $(s,v)$ defined above, we have $s = h_D t h_D^{-1}$ and $v=h_D uh_D^{-1}$.

Recall that there is a canonical isomorphism $X^*(\mathbf{T}) = X^*(\mathbf{B})$ since $\mathcal{R}_u(\mathbf{B}) = \mathbf{U}^+$ and $\mathbf{B} = \mathbf{T} \ltimes \mathbf{U}^+$.
In particular, for any $\chi \in X^*(\mathbf{B})$, we have that $\chi(b) = \chi(s)$.
According to~\cite[\S 8.1]{BT1}, the parahoric subgroups of $\mathbf{G}(k)$ are bounded\footnote{In fact, the parahoric subgroups are compact whenever $\mathbb{F}$ is finite, since $\mathbf{G}(k)$ acts properly on its building $\mathcal{X}_k$ whenever $\mathbf{G}$ is semisimple and $k$ is locally compact.}.
Hence, so is $\widehat{P}_{z'_{\p}} \cap \mathbf{B}(k) \subset \mathbf{G}(k_{\p})$.
Since $\mathbf{G}_k$ is semisimple, for each $\p \in \mathcal{S}$, the set of values $\nu_\p\Big( \chi \left( \widehat{P}_{z'_{\p}} \cap \mathbf{B}(k) \right) \Big)$ is bounded below\footnote{We are in the case where $\mathbf{G} = \mathbf{G}^1$ since $\mathbf{G}_k$ is assumed to be semisimple (cf.~\cite[\S 4.2.16]{BT2}).} according to~\cite[\S 4.2.19]{BT2}.
Thus, $\chi\left( \widehat{P}_{z'_{\p}} \cap \mathbf{B}(k) \right)$ is a subgroup of $\mathcal{O}_{\p}^{\times}$. Hence, we conclude 
$$ \chi\left( \widehat{P}_{z'} \cap \mathbf{B}(k) \right) = \chi \left( \bigcap_{\p \in \mathcal{S}} \widehat{P}_{z'_{\p}} \cap \mathbf{B}(k) \right) \subseteq \bigcap_{\p \in \mathcal{S}} \chi \left( \widehat{P}_{z'_{\p}} \cap \mathbf{B}(k) \right) \subseteq \bigcap_{\p \in \mathcal{S}} \mathcal{O}_{\p}^{\times}.$$

Up to conjugation by an element in $\mathrm{GL}_n(k)$, there is a faithful linear representation $\rho : \mathbf{G} \to \mathrm{GL}_{n,k}$ embedding $\mathbf{T}$ in the maximal torus of $\mathrm{GL}_{n,k}$ consisting in the group of diagonal matrices $\mathrm{D}_n$ and embedding $\mathbf{B}$ in the standard Borel subgroup $\mathrm{B}_n$ of $\mathrm{GL}_{n,k}$ consisting of upper triangular matrices.
Thus, the restriction of $\rho$ to the injective group homomorphism $\mathbf{T} \to \mathrm{D}_n$ induces a surjective homomorphism $\rho^* : X^*(\mathrm{D}_n) \to X^*(\mathbf{T})$ (cf.~\cite[\S 1.2]{BoTi}). 
For any character $\chi' \in X^*(\mathrm{B}_n) = X^*(\mathrm{D}_n)$, we have $\rho^*(\chi') \in X^*(\mathbf{T}) = X^*(\mathbf{B})$, whence $\chi'\big(\rho(b)\big) \in \bigcap_{\p \in \mathcal{S}} \mathcal{O}_{\p}^{\times}$.
Thus, the eigenvalues of $\rho(b)$ belong to $\bigcap_{\p \in \mathcal{S}} \mathcal{O}_{\p}^{\times}$.

Since $G \subseteq \mathbf{G}(\OS)$, the element $b$ belongs to $h_D \mathbf{G}(\mathcal{O}_{\mathcal{S}}) h_D^{-1}$.
Then, $\rho(b)$ is conjugate in $\mathrm{GL}_n(k)$ to a matrix in $\mathrm{GL}_n(\mathcal{O}_{\mathcal{S}})$.
Thus, the characteristic polynomial $P_{\rho(b)}$ of $\rho(b)$ has coefficients in $\mathcal{O}_{\mathcal{S}}$.
Since the eigenvalues are in $\bigcap_{\p \in \mathcal{S}} \mathcal{O}_{\p}$, we deduce that $P_{\rho(b)}$ has coefficients in $\mathcal{O}_{\mathcal{S}}\cap \bigcap_{\p \in \mathcal{S}} \mathcal{O}_{\p} = \mathbb{F}$.
Since $\mathcal{C}$ is assumed geometrically integral over $\mathbb{F}$, the field $\mathbb{F}$ is algebraically closed in $k$ (cf.~\S~\ref{section intro}).
Thus, $P_{\rho(b)}$ splits over $k$.
We deduce that the eigenvalues of $\rho(b)$ are in $\mathbb{F}^{\times}$, whence $\chi'\big(\rho(b)\big) \in \mathbb{F}^{\times}$ for any $\chi' \in X^*(\mathrm{B}_n)$.

Since $\rho^*$ is surjective, we deduce that $\chi(s)=\chi(b) \in \mathbb{F}^\times$ for all  $\chi \in X^*(\mathbf{B})$.
Thus, since $\mathbf{T}$ is split and defined over $\mathbb{Z}$, we deduce from the perfect dual pairing (cf.~\cite[\S 8.6]{BoA}) that $s \in \mathbf{T}(\mathbb{F})$.
Hence $t = h_D^{-1} s h_D $ belongs to $ h_D^{-1} \mathbf{T}(\mathbb{F}) h_D$.
\end{proof}

\begin{corollary}\label{coro T in F}
For any $\mathcal{S}$-arithmetic subgroup $G$ of $\mathbf{G}(k)$, the group $T\left( G_{\gamma(D)} \right)$ is commensurable with a subgroup of $h_D^{-1}\mathbf{T}(\mathbb{F}) h_D$.
\end{corollary}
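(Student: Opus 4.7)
The plan is to reduce to Proposition~\ref{prop torus in F} by cutting $G$ down to its intersection with $\mathbf{G}(\OS)$. Since $G$ is $\mathcal{S}$-arithmetic, the group $G^{\natural} := G \cap \mathbf{G}(\OS)$ has finite index in $G$ (and in $\mathbf{G}(\OS)$). I would first check that taking the pointwise stabilizer of the filter $\gamma(D)$ commutes with this intersection, namely that
\[
G^{\natural}_{\gamma(D)} = G_{\gamma(D)} \cap G^{\natural}.
\]
The inclusion $\subseteq$ is clear from Equation~\eqref{eq G germ of D}. Conversely, if $g \in G_{\gamma(D)} \cap G^{\natural}$, there exists $y \in \mathbb{A}$ with $g \in \fix_G\bigl(Q(y,D)\bigr)$, whence $g \in \fix_{G^{\natural}}\bigl(Q(y,D)\bigr) \subseteq G^{\natural}_{\gamma(D)}$. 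It follows that $[G_{\gamma(D)} : G^{\natural}_{\gamma(D)}] \leq [G : G^{\natural}] < \infty$.

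Next I would transfer this finite index to the diagonalisable part. By Lemma~\ref{lemma torus-unip}, the map $f : G_{\gamma(D)} \to h_D^{-1}\mathbf{T}(k)h_D$ is a surjective group homomorphism onto $T(G_{\gamma(D)})$, whose restriction to $G^{\natural}_{\gamma(D)}$ surjects onto $T(G^{\natural}_{\gamma(D)})$. Since the image of a finite-index subgroup under a surjective homomorphism has finite index in the image, we obtain
\[
\bigl[T(G_{\gamma(D)}) : T(G^{\natural}_{\gamma(D)})\bigr] \leq [G_{\gamma(D)} : G^{\natural}_{\gamma(D)}] < \infty.
\]

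Finally, since $G^{\natural} \subseteq \mathbf{G}(\OS)$, Proposition~\ref{prop torus in F} applied to the $\mathcal{S}$-arithmetic subgroup $G^{\natural}$ gives
\[
T(G^{\natural}_{\gamma(D)}) \subseteq h_D^{-1}\mathbf{T}(\mathbb{F}) h_D.
\]
Combining the last two displays shows that $T(G^{\natural}_{\gamma(D)})$ is a common finite-index subgroup of $T(G_{\gamma(D)})$ and of some subgroup of $h_D^{-1}\mathbf{T}(\mathbb{F})h_D$, which is precisely the commensurability statement.

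There is no real obstacle here, as the argument is bookkeeping: the only point that needs a little care is the equality $G^{\natural}_{\gamma(D)} = G_{\gamma(D)} \cap G^{\natural}$, which relies on the fact that $\gamma(D)$ is generated by a \emph{directed} family of sector chambers (so that an element fixing the filter fixes a single sector in the basis, and that information descends to subgroups).
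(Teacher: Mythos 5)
Your proof is correct and follows essentially the same route as the paper: both cut $G$ down to a finite-index subgroup $G^{\natural}$ contained in $\mathbf{G}(\OS)$, show $G^{\natural}_{\gamma(D)}$ has finite index in $G_{\gamma(D)}$ using the description of $G_{\gamma(D)}$ as a union of fixators from Equation~\eqref{eq G germ of D}, transfer the finite index to the diagonalisable parts via the homomorphism of Lemma~\ref{lemma torus-unip}, and conclude with Proposition~\ref{prop torus in F}. The only cosmetic differences are that you take $G^{\natural}=G\cap\mathbf{G}(\OS)$ exactly and push the index bound through the image of $f$ directly, whereas the paper passes through the quotients by the common unipotent kernel; these are equivalent.
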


\begin{proof}
Since $G$ is commensurable with $\mathbf{G}(\OS)$, there exists a subgroup $G^{\natural}$ of $G \cap \mathbf{G}(\OS)$ such that $[G:G^{\natural}]$ and $[\mathbf{G}(\mathcal{O}_{\mathcal{S}}): G^{\natural}]$ are finite.
Since $G^{\natural} \subseteq \mathbf{G}(\OS)$, it follows from Proposition~\ref{prop torus in F} that $T \left( G^{\natural}_{\gamma(D)} \right) \subseteq h_D^{-1} \mathbf{T}(\mathbb{F}) h_D$.
Thus, in order to prove the result, we have to check that $T \left( G^{\natural}_{\gamma(D)} \right)$ has finite indexed in $T \left( G_{\gamma(D)} \right)$.
It follows from Equation~\eqref{eq G germ of D} that $G_{\gamma(D)}$ equals
$$\bigcup_{y \in \mathbb{A}} \fix_G\big(Q(y,D)\big)= \bigcup_{y \in \mathbb{A}} \left( G \cap \fix_{\mathbf{G}(k)}\big(Q(y,D)\big) \right)  =G \cap \left( \bigcup_{y \in \mathbb{A}} \fix_{\mathbf{G}(k)}\big(Q(y,D)\big)\right).$$
In other words, the group $G_{\gamma(D)}$ is the intersection of $G$ with the group $\mathbf{G}(k)_{\gamma(D)}:=\bigcup_{y \in \mathbb{A}} \fix_{\mathbf{G}(k)}\big(Q(y,D)\big)$.
An analogous argument shows that $G^{\natural}_{\gamma(D)}$ is the intersection of $G^{\natural}$ with $\mathbf{G}(k)_{\gamma(D)}$.
Since, for any $H \subseteq \mathbf{G}(k)$ we have $[G \cap H: G^{\natural} \cap H] \leq [G:G^{\natural}]$, we get, by taking $H=\mathbf{G}(k)_{\gamma(D)}$, that $G^{\natural}_{\gamma(D)}$ has finite index in $G_{\gamma(D)}$.
Moreover, since $G^{\natural}_{\gamma(D)} \subseteq G_{\gamma(D)}$, it follows from Equation~\eqref{eq U germ} that $U \left(G^{\natural}_{\gamma(D)} \right)$ equals
$$ 
h_D^{-1} \mathbf{U}^{+}(k) h_D \cap G^{\natural}_{\gamma(D)}= \left( h_D^{-1} \mathbf{U}^{+}(k) h_D \cap G_{\gamma(D)} \right) \cap G^{\natural}_{\gamma(D)} = U \left(G_{\gamma(D)} \right) \cap G^{\natural}_{\gamma(D)}.
$$
In other words, we have $U \left(G^{\natural}_{\gamma(D)} \right)= U \left(G_{\gamma(D)} \right) \cap G^{\natural}_{\gamma(D)}$.
Since, for any normal subgroup $N$ of $G_{\gamma(D)}$ we have $[G_{\gamma(D)}/N: G_{\gamma(D)}^{\sharp}/(N \cap G^\sharp_{\gamma(D)})] \leq [G_{\gamma(D)}:G^{\sharp}_{\gamma(D)}]$, we obtain, by taking $N=U \left(G_{\gamma(D)} \right)$, that $G^{\natural}_{\gamma(D)}/ U \left( G^{\natural}_{\gamma(D)} \right)$ has finite indexed in $G_{\gamma(D)}/U \left( G_{\gamma(D)} \right)$.
Thus, the result follows from Lemma~\ref{lemma torus-unip}.
\end{proof}

In the context where $\mathcal{S}=\lbrace \p \rbrace$ and $G \subseteq \mathbf{G}(\OS)$, the $G$-stabilizer of the germ of $\mathbb{A}$ with direction $D$ is the $G$-stabilizer of $\D$. More precisely, we have the following result.

\begin{proposition}\label{coro stab of sector chamber for p}
Assume that $\mathcal{S}=\lbrace \p \rbrace$ and $G \subseteq \mathbf{G}(\OS)$. Then, for any $\D \in \SecD$, we have 
$$G_{\gamma(D)}= h_D^{-1} \mathbf{B}(k) h_D \cap G =G_D.$$
\end{proposition}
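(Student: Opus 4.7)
The plan is to establish the two inclusions between $G_{\gamma(D)}$ and $G_D$ separately. The inclusion $G_{\gamma(D)} \subseteq G_D$ is immediate (and independent of $\mathcal{S}$ being a singleton): by Equation~\eqref{eq G germ of D} every $g \in G_{\gamma(D)}$ pointwise fixes some sector chamber $Q(y,D) \subseteq \mathbb{A}$ and hence stabilizes the rational chamber at infinity $\D$, so it lies in $G_D$. The content of the statement lies in the reverse inclusion, which I attack as follows. I take $g \in G_D \subseteq h_D^{-1}\mathbf{B}(k) h_D$ and use the $k$-rational Levi decomposition of the Borel to write uniquely $g = tu$ with $t \in h_D^{-1}\mathbf{T}(k) h_D$ and $u \in h_D^{-1}\mathbf{U}^+(k) h_D$. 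Lemma~\ref{lemma unipotent fixes subsectors} already guarantees that $u$ pointwise fixes some sector chamber $Q(y_0, D) \subseteq \mathbb{A}$, so the problem reduces to proving that $t$ pointwise fixes $\mathbb{A}$: once this is done, $g = tu$ pointwise fixes $Q(y_0, D)$ and therefore lies in $G_{\gamma(D)}$ by Equation~\eqref{eq G germ of D}.

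To obtain the pointwise fixation of $\mathbb{A}$ by $t$, it suffices to show $t \in h_D^{-1}\mathbf{T}(\mathbb{F}) h_D$: since $\mathbf{T}(\mathbb{F}) \subseteq \mathbf{T}(\mathcal{O}_\mathfrak{p})$ pointwise fixes $\mathbb{A}_{0,\mathfrak{p}} = \mathbb{A}_0$ (cf.~\S\ref{section recall alg groups and BT}, using $\mathcal{S} = \{\mathfrak{p}\}$), this gives pointwise fixation of $\mathbb{A} = h_D^{-1} \cdot \mathbb{A}_0$ by $t$. The strategy for the key claim $t \in h_D^{-1}\mathbf{T}(\mathbb{F}) h_D$ is to recycle the eigenvalue/character calculation from Proposition~\ref{prop torus in F}, but with a twist: the parahoric boundedness argument used there relied on $g$ stabilizing a vertex of $X_\mathfrak{p}$, which is not available for a general element of $G_D$. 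I will replace this missing bound by the symmetric use of $g^{-1} \in G$. Explicitly, set $b = h_D g h_D^{-1} = sv$ with $s = h_D t h_D^{-1}$. After fixing a faithful representation $\rho : \mathbf{G} \to \mathrm{GL}_{n,k}$ embedding $\mathbf{B}$ into upper triangular matrices and $\mathbf{T}$ into the diagonal torus $\mathrm{D}_n$, the matrix $\rho(b)$ is $\mathrm{GL}_n(k)$-conjugate to an element of $\mathrm{GL}_n(\mathcal{O}_\mathcal{S})$, so its eigenvalues $\chi'(s)$ for $\chi' \in X^*(\mathrm{D}_n)$ lie in $k$ and are integral over the Dedekind domain $\mathcal{O}_\mathcal{S}$; hence $\chi'(s) \in \mathcal{O}_\mathcal{S}$ by integral closure. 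Running the same argument on $g^{-1} \in G \subseteq \mathbf{G}(\mathcal{O}_\mathcal{S})$ yields $\chi'(s)^{-1} \in \mathcal{O}_\mathcal{S}$, whence $\chi'(s) \in \mathcal{O}_\mathcal{S}^\times$.

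The crucial role of the hypothesis $\mathcal{S} = \{\mathfrak{p}\}$ enters precisely here, through the identity $\mathcal{O}_\mathcal{S}^\times = \mathbb{F}^\times$: any unit $x$ of $\mathcal{O}_\mathcal{S}$ satisfies $\nu_{\mathfrak{p}'}(x) = 0$ for all $\mathfrak{p}' \neq \mathfrak{p}$, and the product formula on the projective curve $\mathcal{C}$ forces $\nu_\mathfrak{p}(x) = 0$ too, so $x$ is a global regular invertible function and thus belongs to $\mathbb{F}^\times$ under the standing assumption that $\mathbb{F}$ is algebraically closed in $k$. Pulling back via the surjection $\rho^* : X^*(\mathrm{D}_n) \to X^*(\mathbf{T}) = X^*(\mathbf{B})$ exactly as in the proof of Proposition~\ref{prop torus in F} delivers $\chi(s) \in \mathbb{F}^\times$ for every $\chi \in X^*(\mathbf{T})$, and the perfect pairing of the split torus $\mathbf{T}$ with $X^*(\mathbf{T})$ then yields $s \in \mathbf{T}(\mathbb{F})$, as desired. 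The main obstacle is this very step: reconstructing the two-sided bound on characters without the parahoric/vertex-stabilization input, for which the replacement is precisely the involution $g \mapsto g^{-1}$ combined with the singleton condition $|\mathcal{S}| = 1$ that collapses $\mathcal{O}_\mathcal{S}^\times$ to $\mathbb{F}^\times$.
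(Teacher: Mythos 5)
Your proof is correct, but it establishes the hard inclusion $G_D \subseteq G_{\gamma(D)}$ by a genuinely different route from the paper. The paper first reduces to $G=\mathbf{G}(\mathcal{O}_{\lbrace \p \rbrace})$ and then argues geometrically: by the embedding theorem \cite[Theorem 2.2]{BravoLoisel}, some subsector chamber $Q'=Q(x',D)$ injects into the quotient $\mathbf{G}(\mathcal{O}_{\lbrace \p \rbrace})\backslash X_{\p}$, and since $Q'\cap g\cdot Q'$ contains a sector chamber \cite[11.77]{AB}, any $g$ stabilizing $\D$ must fix that subsector pointwise. You instead split $g=tu$ in the Borel, dispose of $u$ with Lemma~\ref{lemma unipotent fixes subsectors}, and prove $t\in h_D^{-1}\mathbf{T}(\mathbb{F})h_D$ by rerunning the eigenvalue computation of Proposition~\ref{prop torus in F} with the parahoric bound (unavailable here, since a general element of $G_D$ fixes no vertex) replaced by integral closedness of $\mathcal{O}_{\mathcal{S}}$ applied to both $g$ and $g^{-1}$, together with the collapse $\mathcal{O}_{\lbrace \p \rbrace}^{\times}=\mathbb{F}^{\times}$. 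The steps check out: the eigenvalues of $\rho(b)$ are its diagonal entries, hence lie in $k$ and are roots of the characteristic polynomial, whose coefficients lie in $\mathcal{O}_{\mathcal{S}}$, so they lie in $\mathcal{O}_{\mathcal{S}}$ and, by the same argument for $b^{-1}$, in $\mathcal{O}_{\mathcal{S}}^{\times}$; a unit of $\mathcal{O}_{\lbrace \p \rbrace}$ is constant because its divisor is supported at $\p$ and has degree zero, and $\mathbb{F}$ is algebraically closed in $k$; finally $\mathbf{T}(\mathbb{F})\subseteq \mathbf{T}(\mathcal{O}_{\p})$ fixes $\mathbb{A}_{0}$ pointwise, so $g=tu$ fixes the sector chamber fixed by $u$. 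What each approach buys: the paper's proof is short but rests on a substantially deeper geometric input from the companion paper, whereas yours is self-contained and purely algebraic, makes transparent exactly where $\sharp\mathcal{S}=1$ enters (the unit group), and as a by-product sharpens Proposition~\ref{prop torus in F} in this case by showing that $T(G_D)$ itself, not only $T\left(G_{\gamma(D)}\right)$, is contained in $h_D^{-1}\mathbf{T}(\mathbb{F})h_D$.
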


\begin{proof}
Let $\D$ be an arbitrary element in $\SecD$.
Note that, since $G \subseteq \mathbf{G}(\mathcal{O}_{\lbrace \p \rbrace})$, if the result holds for $G=\mathbf{G}(\OS)$, i.e., if $\mathbf{G}(\mathcal{O}_{\lbrace \p \rbrace})_{\gamma(D)}= h_D^{-1} \mathbf{B}(k) h_D \cap \mathbf{G}(\mathcal{O}_{\lbrace \p \rbrace}) =\mathbf{G}(\mathcal{O}_{\lbrace \p \rbrace})_D$, then the result holds for any subgroup $G \subseteq \mathbf{G}(\OS)$, since $G_{\gamma(D)}= h_D^{-1} \mathbf{B}(k) h_D \cap G =G_D$.
Thus, without loss of generality, we may assume that $G=\mathbf{G}(\mathcal{O}_{\lbrace \p \rbrace})$.

Note that, for any $y\in \mathbb{A}$, the pointwise stabilizer $\mathrm{Fix}_{\mathbf{G}(\mathcal{O}_{\lbrace \p \rbrace})}(Q(y,D))$ is contained in $\mathrm{Stab}_{\mathbf{G}(\mathcal{O}_{\lbrace \p \rbrace})}(\D)$.
Therefore, the group $\mathbf{G}(\mathcal{O}_{\lbrace \p \rbrace})_{\gamma(D)}$ is contained in $\mathbf{G}(\mathcal{O}_{\lbrace \p \rbrace})_D$, which equals $h_D^{-1} \mathbf{B}(k) h_D \cap \mathbf{G}(\mathcal{O}_{\lbrace \p \rbrace})$.
Hence, it remains to prove $\mathbf{G}(\mathcal{O}_{\lbrace \p \rbrace})_D \subseteq \mathbf{G}(\mathcal{O}_{\lbrace \p \rbrace})_{\gamma(D)}$.
Let $Q= h_D^{-1} \cdot Q(x_0,D_0)$ be a sector chamber in $\mathbb{A}$ with direction $D$.
In particular, the sector chamber $Q$ has the form $Q(x,D)$, for certain $x \in \mathbb{A}$.
It follows from~\cite[Thm.~2.2]{BravoLoisel} that there exists a sector chamber $Q'=Q(x',D) \subseteq Q$ which embeds in the quotient $\mathbf{G}(\mathcal{O}_{\lbrace \p \rbrace}) \backslash \mathcal{X}(\mathbf{G},k, \p)$.
Let $g \in \mathbf{G}(\mathcal{O}_{\lbrace \p \rbrace})_D$.
The complex $Q' \cap g \cdot Q'$  is the intersection of two sector chamber with same direction.
In particular, it follows from~\cite[Lemma~11.77]{AB} that $Q' \cap g \cdot Q'$ contains a sector chamber $Q''$, which has the form $Q''=Q(x'',D)$.
Since $Q'' \subseteq Q'$, we have $Q'' \subseteq \mathbb{A}$, whence $x'' \in \mathbb{A}$. 
Let $z$ be a point in $Q''$.
Then $z$ belongs to $Q' \cap g \cdot Q'$, whence there exists $w \in Q'$ satisfying $z= g \cdot w$.
Moreover, since $g$ belongs to $\mathbf{G}(\mathcal{O}_{\lbrace \p \rbrace})$ and since $Q'$ does not have two points in the same $\mathbf{G}(\mathcal{O}_{\lbrace \p \rbrace})$-orbit, we have that $z=w$.
Thus, we conclude that $g \in \mathrm{Stab}_{\mathbf{G}(\mathcal{O}_{\lbrace \p \rbrace})}(z)$, for all $z \in Q''$.
Hence $g$ belongs to $\mathbf{G}(\mathcal{O}_{\lbrace \p \rbrace})_{\gamma(D)}$, which completes the proof.
\end{proof}

\section{Comparison between the stabilizers of a vector chamber and of its germ}\label{section proof of 2.3}

This section is dedicated to prove Theorem~\ref{main teo 3}.
In order to do this, in this section we compare the $G$-stabilizers $G_D$ of a vector chamber $D$ with the $G$-stabilizer $G_{\gamma(D)}$ of its germ $\gamma(D)$.

As above, let $\D \in \SecD$ and $h_D \in \mathbf{G}(k)$ be such that $h_D \cdot \D = \D_0$. 
Recall that the $G$-stabilizer $G_D$ of the vector chamber $D$, or equivalently of $\D$, equals $h_D^{-1} \mathbf{B}(k) h_D \cap G$.
We define a diagonalisable group by
\begin{equation}\label{eq def of T_s}
T(G_{D})=\left\lbrace t \in h_D^{-1} \mathbf{T}(k) h_D: \exists u \in h_D^{-1} \mathbf{U}^{+}(k) h_D,\ tu \in G_{D} \right\rbrace.
\end{equation}

This group is a quotient of $G_D$.
Indeed, by using the semi-direct product decomposition of the Borel subgroup $h_D^{-1} \mathbf{B}(k) h_D$ into a maximal torus and its unipotent radical, any element $g \in G_D$ can be written uniquely $g=tu$ where $t \in h_D^{-1} \mathbf{T}(k) h_D$ and $u \in h_D^{-1} \mathbf{U}^+(k) h_D$. Thus, one can define a map:
\begin{equation}\label{eq def of f}
\begin{array}{cccc}
f:& G_{D}& \to& h_D^{-1} \mathbf{T}(k) h_D\\
&g=tu& \mapsto &t
\end{array}
\end{equation}
which is, in fact, a group homomorphism.
Moreover, the kernel of $f$ is $U(G_{D})$ and its image is $T\left( G_D \right)$.
The next result follows.

\begin{lemma}\label{lemma torus-unip 2}
One has $G_{D}/U(G_{D}) \cong T(G_{D})$. \qed
\end{lemma}

\begin{lemma}\label{G finite is normal in G}
Let $G_{\gamma(D)}$ be the group defined in Equation~\eqref{eq G germ of D}.
Then, $G_{\gamma(D)}$ is a normal subgroup in $G_{D}$.
\end{lemma}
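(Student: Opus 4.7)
The plan is to exploit the semidirect decomposition of the Borel subgroup, together with the identification $U(G_{\gamma(D)})=U(G_D)$ already established in Proposition~\ref{prop equal unuipotent part}. Since the torus component lies in the abelian group $h_D^{-1}\mathbf{T}(k)h_D$, any subgroup of it is automatically normal; normality of $G_{\gamma(D)}$ in $G_D$ will then fall out by a preimage argument.

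First, I would observe that $G_{\gamma(D)}\subseteq G_D$: any $g\in G_{\gamma(D)}$ pointwise fixes some sector chamber $Q(y,D)$, hence stabilises its direction $D$, so $g\in \stab_G(D)=G_D$. Second, I would invoke the group homomorphism
\[
f: G_D \to h_D^{-1}\mathbf{T}(k)h_D, \qquad f(tu)=t,
\]
from Equation~\eqref{eq def of f}, whose kernel is $U(G_D)$ (Lemma~\ref{lemma torus-unip 2}). The restriction of $f$ to $G_{\gamma(D)}$ is nothing but the homomorphism of Lemma~\ref{lemma torus-unip}, so that $f(G_{\gamma(D)})=T(G_{\gamma(D)})$ is a subgroup of $T(G_D)$.

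Third, by Proposition~\ref{prop equal unuipotent part} we have
\[
\ker(f)=U(G_D)=U(G_{\gamma(D)})\subseteq G_{\gamma(D)},
\]
so $G_{\gamma(D)}$ is saturated by the fibres of $f$; equivalently,
\[
G_{\gamma(D)}=f^{-1}\bigl(T(G_{\gamma(D)})\bigr).
\]
Finally, since $\mathbf{T}$ is a torus over the field $k$, the group $h_D^{-1}\mathbf{T}(k)h_D$ is abelian; hence every subgroup of $T(G_D)\subseteq h_D^{-1}\mathbf{T}(k)h_D$, in particular $T(G_{\gamma(D)})$, is normal in $T(G_D)$. Being the preimage under the homomorphism $f$ of a normal subgroup of $f(G_D)=T(G_D)$, $G_{\gamma(D)}$ is a normal subgroup of $G_D$, as required.

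There is essentially no obstacle here once Proposition~\ref{prop equal unuipotent part} and Lemma~\ref{lemma torus-unip} are in hand; the only point to be careful about is that $f$ and its restriction to $G_{\gamma(D)}$ are compatibly defined, and that $T(G_{\gamma(D)})$ is genuinely contained in $T(G_D)$, both of which are immediate from $G_{\gamma(D)}\subseteq G_D$ and the uniqueness of the decomposition $g=tu$ in the Borel $h_D^{-1}\mathbf{B}(k)h_D$.
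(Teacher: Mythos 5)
Your proof is correct, but it takes a genuinely different route from the one in the paper. The paper argues directly from the definition of $G_{\gamma(D)}$ as a union of pointwise fixators: given $\tau \in G_D$ and $g \in \fix_G(Q)$ for a sector chamber $Q \subseteq \mathbb{A}$ with direction $D$, the conjugate $\tau g \tau^{-1}$ lies in $\fix_G(\tau \cdot Q)$, and since $\tau$ stabilizes the chamber at infinity, the intersection $Q \cap \tau \cdot Q$ contains a sector chamber $Q' \subseteq \mathbb{A}$ of direction $D$ by \cite[11.77]{AB}, whence $\tau g \tau^{-1} \in \fix_G(Q') \subseteq G_{\gamma(D)}$. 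You instead reduce everything to the homomorphism $f$ of Equation~\eqref{eq def of f}: since $\ker f = U(G_D) = U(G_{\gamma(D)}) \subseteq G_{\gamma(D)}$ by Proposition~\ref{prop equal unuipotent part}, the subgroup $G_{\gamma(D)}$ is the full preimage $f^{-1}\bigl(T(G_{\gamma(D)})\bigr)$ of a subgroup of the abelian group $h_D^{-1}\mathbf{T}(k)h_D$, hence normal. Your argument is sound and non-circular (Proposition~\ref{prop equal unuipotent part} and Lemmas~\ref{lemma torus-unip} and~\ref{lemma torus-unip 2} all precede this lemma and do not depend on it), and it in fact yields the slightly stronger observation that \emph{every} subgroup of $G_D$ containing $U(G_D)$ is normal, because $G_D/U(G_D)$ is abelian. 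The trade-off is that it leans on Proposition~\ref{prop equal unuipotent part}, whose proof itself requires the geometric Lemma~\ref{lemma unipotent fixes subsectors}, whereas the paper's argument uses only the incidence geometry of sector chambers and is independent of that proposition.
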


\begin{proof}
Let $\tau \in G_{D}$ and let $g \in G_{\gamma(D)}$ be arbitrary elements. 
Then, by definition of $G_{\gamma(D)}$, the element $g$ belongs to $\mathrm{Fix}_G(Q)$, for some sector chamber $Q$ contained in $\mathbb{A}$.
Since, by its definition, $G_{D}$ equals $\mathrm{Stab}_G(\D)$, we have $\tau \cdot \D=\D$.
Thus, it follows from~\cite[Lemma~11.77]{AB} that the intersection $Q \cap \tau \cdot Q$ contains a sector chamber $Q'$.
In particular, since $Q' \subseteq Q$, the direction of the sector chamber $Q'$ is $D$ and $Q' \subset \mathbb{A}$.
Since $\tau \in G$, the element $\tau g \tau^{-1}$ belongs to $\mathrm{Fix}_G(\tau \cdot Q)$.
But, since $Q' \subseteq \tau \cdot Q$, we have $\mathrm{Fix}_G(\tau \cdot Q) \subseteq \mathrm{Fix}_G(Q')$.
Thus, we conclude that $\tau g \tau^{-1}$ belongs to $\mathrm{Fix}_G(Q')$, where $Q' \subseteq \mathbb{A}$.
This implies that $\tau g \tau^{-1} \in G_{\gamma(D)}$, whence the result follows.
\end{proof}

It follows from Lemma~\ref{G finite is normal in G} that the quotient groups $G_{D}/G_{\gamma(D)}$ and $T(G_{D})/T \left( G_{\gamma(D)} \right)$ make sense. The following proposition describe their structure.
As in \S~\ref{section max unip sub}, we denote by $\mathbf{t}$ the rank of $\mathbf{G}$.

 \begin{proposition}\label{proposition comp of G_s and Gf}
There exists an integer $r=r(\mathbf{G}, \mathcal{S},G,D) \in \mathbb{Z}_{\geq 0}$, with $r \leq \mathbf{t} \cdot \sharp \mathcal{S}$, such that $G_{D}/G_{\gamma(D)}\cong T(G_{D})/T \left( G_{\gamma(D)}\right)\cong \mathbb{Z}^{r}$.
 \end{proposition}

\begin{proof}
At first we prove that $G_{D}/G_{\gamma(D)}$ and $T(G_{D})/T \left( G_{\gamma(D)}\right)$ are isomorphic.
 Indeed, we have $G_{D}/U(G_{D}) \cong T(G_{D})$ according to Lemma~\ref{lemma torus-unip 2} and that $G_{\gamma(D)}/U \left( G_{\gamma(D)} \right) \cong T \left( G_{\gamma(D)}\right)$ according to Lemma~\ref{lemma torus-unip}. 
 Then, since Proposition~\ref{prop equal unuipotent part} shows that $U \left( G_{\gamma(D)} \right)=h_{D}^{-1} \mathbf{U}^{+}(k) h_{D} \cap G= U(G_{D})$, we conclude that $G_{D}/G_{\gamma(D)} \cong T(G_{D})/ T \left( G_{\gamma(D)} \right)$.

Now, we prove that $G_{D}/G_{\gamma(D)}\cong \mathbb{Z}^{r}$, for some $0 \leq r \leq \mathbf{t} \cdot \sharp \mathcal{S}$.
Indeed, let $h \in \mathbf{G}(k)$ be such that $\D = h^{-1} \cdot \D_0$.
Then, by Equality~\eqref{eq decomposition of stab of germ}, we have that the pointwise stabilizer of $\gamma(D)$ in $\widehat{G}_\mathcal{S}$ is $\prod_{\p \in \mathcal{S}} h \big(\mathbf{T}(\mathcal{O}_\p) \cdot \mathbf{U}^+(k_\p) \big) h^{-1}$.
Thus, by the diagonal action of $G$ on $\X$, the pointwise stabilizer $G_{\gamma(D)}$ in $G$ of $\gamma(D)$ can be written as:
\[
    G \cap \bigcap_{\p\in\mathcal{S}} \mathbf{G}(k) \cap \left( h \mathbf{T}(\mathcal{O}_\p) \cdot \mathbf{U}^+(k_\p)h^{-1} \right)
    = G \cap h \left( \bigcap_{\p \in \mathcal{S}} \mathbf{G}(k) \cap \left( \mathbf{T}(\mathcal{O}_\p) \cdot \mathbf{U}^+(k_\p) \right) \right) h^{-1}.\]
Since $\mathbf{T}(\mathcal{O}_\p) \cdot \mathbf{U}^+(k_\p) \subset \mathbf{B}(k_\p)$ and $\mathbf{G}(k) \cap \mathbf{B}(k_\p) = \mathbf{B}(k)$, in $\mathbf{G}(k_\p)$, we have that
\[\mathbf{G}(k) \cap \left( \mathbf{T}(\mathcal{O}_\p) \cdot \mathbf{U}^+(k_\p) \right) =\mathbf{B}(k) \cap \left(\mathbf{T}(\mathcal{O}_\p) \cdot \mathbf{U}^+(k_\p) \right).\]
Consider the quotient group 
\[\Lambda_h := h \mathbf{B}(k) h^{-1} / h \left( \bigcap_{\p \in \mathcal{S}} \mathbf{B}(k) \cap \left( \mathbf{T}(\mathcal{O}_\p) \cdot \mathbf{U}^+(k_\p) \right) \right) h^{-1}\]
By restricting the quotient homomorphism $\pi_h:h\mathbf{B}(k)h^{-1} \to \Lambda_h$ to $G_D$, we deduce that $G_D / G_{\gamma(D)}$ is isomorphic to a certain subgroup of $\Lambda_h$.

Consider the diagonal group homomorphism
\[\varphi: \mathbf{B}(k) \to \prod_{\p \in \mathcal{S}} \mathbf{B}(k_\p) / \left( \mathbf{T}(\mathcal{O}_\p) \cdot \mathbf{U}^+(k_\p)\right).\]
For any $\p \in \mathcal{S}$, we have that $\mathbf{B}(k_\p) / \left( \mathbf{T}(\mathcal{O}_\p) \cdot \mathbf{U}^+(k_\p) \right) \cong \mathbf{T}(k_\p) / \mathbf{T}(\mathcal{O}_\p) \cong \mathbb{Z}^{\mathbf{t}}$, where $\mathbf{t}$ is the dimension of $\mathbf{T} \cong \mathbb{G}_m^{\mathfrak{t}}$, since $\mathbf{G}$ is split.
Moreover, since $\ker \varphi = \bigcap_{\p \in \mathcal{S}} \mathbf{B}(k) \cap \left( \mathbf{T}(\mathcal{O}_\p) \cdot \mathbf{U}^+(k_\p) \right)$, we deduce that $\Lambda_h$ is isomorphic to a subgroup of $\operatorname{im}\varphi = \left(\mathbb{Z}^\mathbf{t}\right)^{\mathcal{S}}$.
Hence $\Lambda_h$ is a finitely generated free $\mathbb{Z}$-module, whence so is $G_D / G_{\gamma(D)}$ as a submodule of $\Lambda_h$.
We denote by $r(\mathbf{G},\mathcal{S},G,D)$ its rank as free $\mathbb{Z}$-module, which is less than or equal to $\mathbf{t} \cdot \sharp \mathcal{S}$ by construction.
\end{proof}

\begin{proof}[End of the proof of Theorem~\ref{main teo 2}]
Statements~(1) and~(4) directly follow from Lemma~\ref{lemma torus-unip 2} and Proposition~\ref{coro stab of sector chamber for p}, respectively.
Since $T(G_{D})$ and $T \left( G_{\gamma(D)}\right)$ are abelian groups, it follows from Proposition~\ref{proposition comp of G_s and Gf} that there exists an exact sequence
\begin{equation}\label{exact sequence in torus}
0 \to T \left( G_{\gamma(D)}\right) \to T(G_{D}) \to \mathbb{Z}^r \to 0,  
\end{equation}
where $r \leq \mathbf{t} \cdot \sharp \mathcal{S}$.
Since $\mathbb{Z}^r$ is $\mathbb{Z}$-free, the exact sequence~\eqref{exact sequence in torus} splits. 
Thus, the group $T(G_{D})$ is isomorphic to a semi-direct product of $T \left( G_{\gamma(D)}\right)$ by $\mathbb{Z}^r$. 
Note that, since $T(G_{D})$ is abelian, it is isomorphic to the direct product of $T \left( G_{\gamma(D)}\right)$ and $\mathbb{Z}^r$.
Thus, Statement~(2) follows from Corollary~\ref{coro T in F} by setting $T:=h_D T \left( G_{\gamma(D)}\right) h_D^{-1}$.
Moreover, Statement~(3) is a direct consequence of Proposition~\ref{prop torus in F}.

Now, assume that $G \subseteq \mathbf{G}(\OS)$, $\mathcal{S}=\lbrace \p \rbrace$, $\mathbb{F}$ is finite of characteristic $p$ and that the torsion of $G$ is $p$-primary.
Since $G \subseteq \mathbf{G}(\OS)$ and $\mathcal{S}=\lbrace \p \rbrace$, it follows from Proposition~\ref{coro stab of sector chamber for p} that $G_{D}=G_{\gamma(D)}$. 
Let $g \in G_{D}=G_{\gamma(D)}$ be an arbitrary element.
We write it as $g = t \cdot u$, where $t \in T \left( G_{\gamma(D)} \right)$ and $u \in U \left( G_{\gamma(D)}\right)$ (cf. Lemma~\ref{lemma torus-unip}).
Since $\mathbb{F}$ is finite, the group $T \left( G_{\gamma(D)}\right)$ has cardinality $q$ coprime with $p$, according to Proposition~\ref{prop torus in F}.
Then $g^q=t^q \cdot \tilde{u}=\tilde{u}$, for certain $\tilde{u} \in h^{-1}_{D} \mathbf{U}^{+}(k) h_{D}$.
Recall that, up to conjugate by an element in $\mathrm{GL}_{n,k}(k)$, there exists a faithful $k$-linear representation $\mathbf{G}(k) \to \mathrm{GL}_{n,k}(k)$ embedding $\mathbf{U}^{+}(k)$ in the group of unipotent upper triangular matrices $\mathrm{U}_n(k)$ of $\mathrm{GL}_{n,k}(k)$.
Since $\mathrm{Char}(\mathbb{F})=p$, $\mathrm{U}_n(k)$ is a torsion group, whence $\mathbf{U}^{+}(k)$ also is a torsion group.
In particular, the unipotent element $\tilde{u}\in h_D^{-1} \mathbf{U}^{+}(k) h_{D}$ has finite order.
We deduce that $g$ has finite order.
Thus, since the torsion of $G$ is $p$-primary, the order of $g$ equals $p^f$, for some $f \in \mathbb{Z}_{\geq 0}$.
Let $a,b \in \mathbb{Z}$ be such that $1=ap^f+bq$.
Then $g= (g^{p^f})^a \cdot (g^q)^b=(g^q)^b \in  h_D^{-1} \mathbf{U}^{+}(k) h_D$. Hence, we deduce that $G_{D} \subseteq h^{-1}_{D} \mathbf{U}^{+}(k) h_{D} \cap G=U(G_{D})$, whence Statement~(5) follows.
\end{proof}

Recall that the isomorphism classes of line bundles over $\mathcal{C}$ form the Picard group $\mathrm{Pic}(\mathcal{C})$ with the tensor product as composition law.
If $e$ denotes the gcd of the degrees of closed points on $\mathcal{C}$, then we have the exact sequence:
$$ 0 \to \mathrm{Pic}^{0}(\mathcal{C}) \to \mathrm{Pic}(\mathcal{C}) \xrightarrow{\deg} e \mathbb{Z} \to 0. $$
The group $\mathrm{Pic}^{0}(\mathcal{C})$ is called the Jacobian variety of $\mathcal{C}$. This is finite when $\mathbb{F}$ is finite according to Weyl's theorem (cf.~\cite[Ch. II, \S~2.2]{S}).

\begin{example}\label{ex for D_0}
Let $\D_0 \in \SecD$ as in \S~\ref{section diagonal action}. In order to simplify our calculations, we set $h_{D_0}=\mathrm{id}$.
Let $G=\mathbf{G}(\OS)$ be the group of $\OS$-points of $\mathbf{G}$. 
By definition, we have
\begin{equation}\label{eq G_D and U(G_D) for D_0}
G_{D_0}= \mathbf{B}(k) \cap \mathbf{G}(\OS)= \mathbf{B}(\OS) \text{ and } U \left( G_{D_0} \right) = \mathbf{U}^{+}(k) \cap \mathbf{G}(\OS)= \mathbf{U}^{+}(\OS).   
\end{equation}
Recall that $T(G_{D_0})$ is the image of $G_{D_0}$ by the group homomorphism $f: G_{D_0} \to \mathbf{T}(k)$ defined by $f(g)=t$, where $g=t \cdot u$.
Moreover, since $\ker(f)=U \left( G_{D_0} \right)$ according to Lemma~\ref{lemma torus-unip 2}, it follows from Equation~\eqref{eq G_D and U(G_D) for D_0} that 
\begin{equation}\label{eq T(G_D) for D_0}
T\left(G_{D_0}\right) = \mathbf{T}(\OS).
\end{equation}
Now, we decompose $T(G_{D_0})$ as a direct product, as in Theorem~\ref{main teo 2}~(2).
To get such a decomposition, consider $T \left(G_{\gamma(D_0)} \right)$ the semisimple group defined in Equation~\eqref{eq T germ}.
On the one hand, Proposition~\ref{prop torus in F} shows that $T \left( G_{\gamma(D_0)} \right)$ is a subgroup of $\mathbf{T}(\mathbb{F})$.
On the other hand, since $\nu_{\p}(\mathbb{F}^{*})= \lbrace 0 \rbrace$, for any place $\p$ on $k$, we have that $\mathbf{T}(\mathbb{F})$ fixes any point in $\mathbb{A}$.
This implies that $\mathbf{T}(\mathbb{F})$ is contained in each $\mathrm{Fix}_G\left(Q(y,D)\right)$, for $y \in \mathbb{A}_0$. In other words, we have $\mathbf{T}(\mathbb{F}) \subseteq G_{\gamma(D_0)}$. 
Thus, we get that $\mathbf{T}(\mathbb{F}) \subseteq T  \left( G_{\gamma(D_0)} \right)$, whence:
\begin{equation}\label{eq T(g(G_D)) for D_0}
 T  \left( G_{\gamma(D_0)} \right)=\mathbf{T}(\mathbb{F}).
 \end{equation}
Thus, it follows from Equation~\eqref{eq T(G_D) for D_0} and Equation~\eqref{eq T(g(G_D)) for D_0} that $T\left(G_{D_0}\right) / T\left(G_{\gamma(D_0)}\right)$ equals $ \mathbf{T}(\OS)/\mathbf{T}(\mathbb{F}).$
Since $\mathbf{T} \cong \mathbb{G}_m^{\mathbf{t}}$, where $\mathbf{t}=\mathrm{rk}(\mathbf{G})$, the following diagram commutes:
$$
\def\commutatif{\ar@{}[rd]|{\circlearrowleft}}
\xymatrix{
\mathbf{T}(\mathbb{F}) \ar[r]^{\cong} \ar@{^{(}->}[d]^{\iota} \commutatif & \left( \mathbb{F}^{*} \right)^{\mathbf{t}} \ar@{^{(}->}[d]^{\iota} 
\\
\mathbf{T}(\OS) \ar[r]^{\cong} & \left( \OS^{*} \right)^{\mathbf{t}}  
 }.$$
In other words, we have 
\begin{equation}
T\left(G_{D_0}\right) / T\left(G_{\gamma(D_0)}\right) = \mathbf{T}(\OS)/\mathbf{T}(\mathbb{F}) \cong \left( \OS^{*} \right)^{\mathbf{t}} / \left( \mathbb{F}^{*} \right)^{\mathbf{t}} \cong \left( \OS^{*}/ \mathbb{F}^{*}\right) ^{\mathbf{t}}
\end{equation}
It follows from the Dirichlet unit theorem (cf.~\cite[\S 14, Cor. 1]{Ro}) that $\OS^{*}$ is isomorphic to the direct product of $\mathbb{F}^{*}$ and a $\mathbb{Z}$-free group $\Lambda'$, whose rank is at most $\sharp S-1$.
Moreover, \cite[\S 14, Prop. 14.2]{Ro} shows that the rank of $\Lambda'$ is exactly $\sharp S-1$ when $\mathrm{Pic}^{0}(\mathcal{C})$ is a torsion group.
This is the case when $\mathbb{F}$ is finite.
We conclude that $T\left( G_{D_0} \right)$ is isomorphic to the direct product of $\mathbf{T}(\mathbb{F})$ and the $\mathbb{Z}$-free group $\Lambda:=\left( \OS^{*}/ \mathbb{F}^{*}\right) ^{\mathbf{t}} \cong (\Lambda')^{\mathbf{t}}$, whose rank $r_{D_0}$ is at most $\mathbf{t} \cdot (\sharp S-1)$, with equality when $\mathrm{Pic}^{0}(\mathcal{C})$ is a torsion group.
\end{example}

\begin{remark}
Assume that $\mathbb{F}$ is an algebraic (possible infinite) extension of a finite field $\mathbb{F}_p$.
Since any element of $\mathrm{Pic}^0(\mathcal{C})$ is defined over a finite extension $\mathbb{L} \subset \mathbb{F}$ of $\mathbb{F}_p$, the group $\mathrm{Pic}^0(\mathcal{C})$ is a torsion group. 
In particular $r_{D_0}= \mathrm{rk}(\mathbf{G}) \cdot (\sharp \mathcal{S}-1)$, for such fields.

Recall that the Jacobian variety $\mathrm{Pic}^0(\mathcal{C})$ is an Abelian variety.
Moreover, recall that the group of $n$-torsion points $\mathcal{A}[n]$ of any abelian variery $\mathcal{A}$ is a finite group.
This implies that the group of torsion points of $\mathcal{A}$ is countable.
In particular, the group of torsion points of $\mathrm{Pic}^0(\mathcal{C})$ is countable.
Let $\mathbb{F}$ be a non-countable perfect field.
We can take, for instance, $\mathbb{F}=\mathbb{C}$, in characteristic $0$, and $\mathbb{F}=\overline{\mathbb{F}_{p}((T))}$, in characteristic $p$.
Then, for an elliptic curve $\mathcal{E}$, the Jacobian variety $\mathrm{Pic}^0(\mathcal{E}) \cong \mathcal{E}(\mathbb{F})$ is non-countable.
This implies that $\mathrm{Pic}^0(\mathcal{E})$ has non-torsion elements.
In particular, the integer $r_{D_0}$ is strictly smaller than $\mathrm{rk}(\mathbf{G}) \cdot (\sharp \mathcal{S}-1)$, for such curves.
\end{remark}

%Let us denote by $\Sigma_0$ a representative system of the $\mathbf{G}(\OS)$-orbits in $\SecD$.
It follows from Theorem~\ref{main teo 2}~(2)-(3) that, for each $\D \in \SecD$, the semisimple group $T((\mathbf{G}(\OS))_{D})$ is isomorphic to the direct product of a subgroup of $\mathbf{T}(\mathbb{F})$ with $\mathbb{Z}^{r_{0,D}}$, for some $r_{0,D}:=r(\mathbf{G}, \mathcal{S},D) \in \mathbb{Z}$, which just depends on the triple $(\mathbf{G}, \mathcal{S},D)$.
Moreover, when $D=D_0$, Example~\ref{ex for D_0} shows that $r_{0,D}=r_{D_0} \leq \mathbf{t}(\sharp \mathcal{S}-1)$, with equality whenever $\mathrm{Pic}^0(\mathcal{C})$ is a torsion group.
In the remaining part of this section, we find a certain suitable decomposition for $T(G_D)$ in terms of $r_{0,D}$, in the case where $G$ is a normal finite index subgroup of $\mathbf{G}(\OS)$.

\begin{lemma}\label{lemma G normal and its subgroups}
Assume that $G$ is a normal subgroup of $\mathbf{G}(\OS)$.
Let $\D' \in \SecD$ be a chamber which belongs to the $\mathbf{G}(\OS)$-orbit of $\D \in \SecD$.
Then:
\begin{itemize}
\item[(1)] $G_{D'}$ is $\mathbf{G}(\mathcal{O}_{\mathcal{S}})$-conjugate to $G_D$,
\item[(2)] $U(G_{D'})$ is $\mathbf{G}(\mathcal{O}_{\mathcal{S}})$-conjugate to $U(G_D)$, and
\item[(3)] $T(G_{D'})$ is isomorphic to the direct product of a subgroup of $\mathbf{T}(\mathbb{F})$ and a $\mathbb{Z}$-free group, whose rank is exactly $r_{0,D}$. 
\end{itemize}
\end{lemma}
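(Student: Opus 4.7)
The plan is to reduce everything to a single conjugation by an element $\gamma \in \mathbf{G}(\OS)$ satisfying $D' = \gamma \cdot D$, which exists by hypothesis. Because $G$ is normal in $\mathbf{G}(\OS)$, the inner automorphism $\operatorname{int}(\gamma)$ of $\mathbf{G}(k)$ restricts to an automorphism of $G$, i.e. $\gamma G \gamma^{-1} = G$. This single observation will drive all three items.

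For (1), I would combine the identity $G_D = G \cap \mathrm{Stab}_{\mathbf{G}(k)}(D)$ with the equality $\mathrm{Stab}_{\mathbf{G}(k)}(D') = \gamma \mathrm{Stab}_{\mathbf{G}(k)}(D) \gamma^{-1}$ coming from $D' = \gamma \cdot D$; using $\gamma G \gamma^{-1} = G$ then gives $G_{D'} = \gamma G_D \gamma^{-1}$, so $G_{D'}$ and $G_D$ are indeed $\mathbf{G}(\OS)$-conjugate. For (2), conjugation by any element of $\mathbf{G}(k)$ sends unipotent elements to unipotent elements, and by Theorem~\ref{main teo 1}(1), $U(G_D)$ is \emph{intrinsically} the subgroup of unipotent elements of $G_D$. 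Hence $\gamma U(G_D) \gamma^{-1}$ is the subgroup of unipotent elements of $\gamma G_D \gamma^{-1} = G_{D'}$, which again by Theorem~\ref{main teo 1}(1) equals $U(G_{D'})$.

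For (3), I would first apply Theorem~\ref{main teo 2}(2)–(3) to $D'$ directly, using that $G \subseteq \mathbf{G}(\OS)$ (since $G$ is normal in $\mathbf{G}(\OS)$): this immediately produces a decomposition $T(G_{D'}) \cong T' \times \mathbb{Z}^{r_{D'}}$ with $T' \subseteq \mathbf{T}(\mathbb{F})$ and $r_{D'} \leq \mathbf{t} \cdot \sharp\mathcal{S}$. The remaining task is to prove $r_{D'} = r_D$. I would pick the representative $h_{D'} := h_D \gamma^{-1}$ (which satisfies $h_{D'} \cdot D' = h_D \cdot D = D_0$); then the apartment attached to $D'$ in Equation~\eqref{eq appatment} is $h_{D'}^{-1} \cdot \mathbb{A}_0 = \gamma h_D^{-1} \cdot \mathbb{A}_0 = \gamma \mathbb{A}$. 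Since the family of sector chambers of $\mathbb{A}$ with direction $D$ is transported by $\gamma$ to the family of sector chambers of $\gamma \mathbb{A}$ with direction $D'$, the description~\eqref{eq G germ of D} of the pointwise stabilizer of the germ, together with $\gamma G \gamma^{-1} = G$, yields $G_{\gamma(D')} = \gamma\, G_{\gamma(D)}\, \gamma^{-1}$. Conjugation by $\gamma$ therefore induces an isomorphism $G_{D}/G_{\gamma(D)} \cong G_{D'}/G_{\gamma(D')}$, and by Proposition~\ref{proposition comp of G_s and Gf} these are free $\mathbb{Z}$-modules of ranks $r_D$ and $r_{D'}$ respectively, so $r_{D'} = r_D$.

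The main (and essentially only) subtlety is step~(3): the integer $r_D$ attached to $D$ depends a priori on the choice of $h_D$ (and hence on the apartment used to define the germ), so one must check that the choice $h_{D'} := h_D \gamma^{-1}$ makes the apartment used for $D'$ coincide with $\gamma \mathbb{A}$, so that conjugation by $\gamma$ literally sends $G_{\gamma(D)}$ onto $G_{\gamma(D')}$. Once this is made explicit, all three statements follow from the single intertwining relation between inner conjugation by $\gamma$ and the constructions of $G_D$, $U(G_D)$ and $G_{\gamma(D)}$.
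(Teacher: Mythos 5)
Your arguments for (1) and (2) are correct and essentially the paper's: the paper likewise reduces everything to conjugation by the element of $\mathbf{G}(\OS)$ carrying $D$ to $D'$, using normality of $G$ to commute that conjugation past the intersection with $G$. Your derivation of (2) from the intrinsic characterization of $U(G_D)$ as the set of unipotent elements of $G_D$ (Theorem~\ref{main teo 1}(1)) is a mild, clean variant of the paper's direct computation with $h_{D'}=h_D g$. Your treatment of the germ stabilizers in (3) --- choosing $h_{D'}:=h_D\gamma^{-1}$ so that the apartment attached to $D'$ is $\gamma\cdot\mathbb{A}$, and concluding $G_{\gamma(D')}=\gamma\, G_{\gamma(D)}\,\gamma^{-1}$ and hence $G_D/G_{\gamma(D)}\cong G_{D'}/G_{\gamma(D')}$ --- is also exactly what the paper does, and your attention to the dependence of the apartment on the choice of $h_{D'}$ is well placed.

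There is, however, a genuine gap in the last step of (3). What your argument proves is that the two free ranks attached to the \emph{same} group $G$ at $D$ and at $D'$ coincide, i.e. $r(D',G)=r(D,G)$ in the notation of the paper's proof. But $r_D$ in the statement is, by the paragraph immediately preceding the lemma, the rank attached to the \emph{full} group $\mathbf{G}(\OS)$ at $D$: it is the rank of the free part of $T\big((\mathbf{G}(\OS))_D\big)$, not of $T(G_D)$. A priori the rank produced by Proposition~\ref{proposition comp of G_s and Gf} depends on the arithmetic subgroup, so you still owe the equality $r(D,G)=r_D$. The paper closes this with a commensurability argument: since $G_{\gamma(D)}=(\mathbf{G}(\OS))_{\gamma(D)}\cap G$, the inclusion $G_D\hookrightarrow(\mathbf{G}(\OS))_D$ induces an injection $G_D/G_{\gamma(D)}\hookrightarrow(\mathbf{G}(\OS))_D/(\mathbf{G}(\OS))_{\gamma(D)}$ whose image has finite index (because $G$, being a normal $\mathcal{S}$-arithmetic subgroup of $\mathbf{G}(\OS)$, has finite index in it), and a finite-index subgroup of a finitely generated free $\mathbb{Z}$-module has the same rank. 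Without this step the assertion that the rank is \emph{exactly} $r_D$ is not justified by what you wrote.
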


\begin{proof}
By definition of $\D'$, there exists $g \in \mathbf{G}(\OS)$ such that $\D'= g^{-1} \cdot \D$.
In particular, we have $h_{D'}= h_D g$.
Since $G$ is normal in $\mathbf{G}(\mathcal{O}_{\mathcal{S}})$, we get 
$$h_{D'}^{-1} \mathbf{B}(k) h_{D'} \cap  G= (gh_D)^{-1}  \mathbf{B}(k) (g h_D)\cap  G = g^{-1} \left( h_{D}^{-1} \mathbf{B}(k) h_{D} \cap  G\right) g.$$
In other words, we have 
\begin{equation}\label{G_D is g-conjugated}
G_{D'}= g^{-1} G_D g.
\end{equation}
By an analogous argument, we get
$$h_{D'}^{-1} \mathbf{U}^{+}(k) h_{D'} \cap  G= (gh_D)^{-1}  \mathbf{U}^{+}(k) (g h_D)\cap  G = g^{-1} \left( h_{D}^{-1} \mathbf{U}^{+}(k) h_{D} \cap  G\right) g,$$
whence
\begin{equation}\label{U(G_D) is g-conjugated}
U\left(G_{D'}\right)= g^{-1} U \left( G_D \right) g.
\end{equation}
Thus, Statement~(1) and~(2) follow.

Now, we prove Statement~(3). Let $\mathbb{A}= h_D^{-1} \cdot D_0$ and $\mathbb{A'}= h_{D'}^{-1} \cdot D_0$ as in Equation~\eqref{eq appatment}.
Since $h_{D'}= h_D g$, we have $\mathbb{A}'=g^{-1} \cdot \mathbb{A}$. 
In particular, for any $z \in \mathbb{A}'$, the point $y=g^{-1} \cdot y$ belongs to $\mathbb{A}$.
Since $G$ is normal in $\mathbf{G}(\OS)$, we have
\begin{multline*}
\fix_G\big(Q(z,D)\big)= \fix_{\mathbf{G}(k)}\big(Q(z,D)\big) \cap G =  g^{-1} \fix_{\mathbf{G}(k)}\big(Q(y,D')\big) g \cap G
\\ 
= g^{-1} \left( \fix_{\mathbf{G}(k)}\big(Q(y,D')\big)  \cap  g G g^{-1} \right) g
= g^{-1} \fix_G\big(Q(y,D)\big) g.
\end{multline*}
Thus, it follows from Equation~\eqref{eq G germ of D} that $G_{\gamma(D')} \subseteq g^{-1} G_{\gamma(D)} g$. 
An analogous argument provides the converse inclusion, whence 
\begin{equation}\label{eq G(gamma(D)) is g-conjugated}
G_{\gamma(D')} = g^{-1} G_{\gamma(D)} g.   
\end{equation}
Hence, it follows from Proposition~\ref{proposition comp of G_s and Gf} that
\begin{equation}\label{eq T/T are isomorphics}
T(G_D)/ T(G_{\gamma(D)}) \cong G_D/ G_{\gamma(D)} \cong  G_{D'}/ G_{\gamma(D')} \cong T(G_{D'})/ T(G_{\gamma(D')}). 
\end{equation}
Since $G \subseteq \mathbf{G}(\OS)$, it follows from Theorem~\ref{main teo 2}~(2)-(3) that $T(G_D)$ (resp. $T(G_{D'})$) is isomorphic to the direct product of a subgroup of $\mathbf{T}(\mathbb{F})$ and a free $\mathbb{Z}$-module, whose rank we denote here as $r(D,G)$ (resp. $r(D',G)$).
In particular, it follows from Equation~\eqref{eq T/T are isomorphics} that $r(D,G)=r(D',G)$.
In other words, the map $D' \mapsto r(D',G)$ is a constant function on the $G$-orbit of $D$.
Thus, in order to prove Statement~(3) we have to show that $r(D,G)$ equals $r_{0,D}$.

Since $G_{\gamma(D)}= \left( \mathbf{G}(\OS)\right)_{\gamma(D)} \cap G$, the natural inclusion $G_D \to \left( \mathbf{G}(\OS)\right)_{D}$ induces an injective group homomorphism $\iota: G_{D}/G_{\gamma(D)} \to \left( \mathbf{G}(\OS)\right)_{D}/ \left( \mathbf{G}(\OS)\right)_{\gamma(D)}$.
We identify $G_{D}/G_{\gamma(D)}$ with its image via $\iota$, so that we realizes $G_{D}/G_{\gamma(D)}$ as a subgroup of $\left( \mathbf{G}(\OS)\right)_{D}/ \left( \mathbf{G}(\OS)\right)_{\gamma(D)}$.
Since $G$ has finite index in $\mathbf{G}(\OS)$, the group $G_{D}$ has finite index in $\left( \mathbf{G}(\OS)\right)_{D}$, whence $G_{D}/G_{\gamma(D)}$ has a finite index in $\left( \mathbf{G}(\OS)\right)_{D}/ \left( \mathbf{G}(\OS)\right)_{\gamma(D)}$.
This implies that $r(D,G)=r_{0,D}$, since the respective free $\mathbb{Z}$-modules are commensurable.
Thus, Statement~(3) follows.
\end{proof}

\begin{corollary}\label{coro O_s principal and G normal}
Assume that $\mathcal{O}_{\mathcal{S}}$ is a principal ideal domain. 
Let $G$ be a normal subgroup of $\mathbf{G}(\mathcal{O}_{\mathcal{S}})$.
Then, for any $\D \in \SecD$:
\begin{itemize}
\item[(1)] $G_D$ is $\mathbf{G}(\mathcal{O}_{\mathcal{S}})$-conjugate to $\mathbf{B}(k) \cap G$,
\item[(2)] $U(G_D)$ is $\mathbf{G}(\mathcal{O}_{\mathcal{S}})$-conjugate to $\mathbf{U}^{+}(k) \cap G$,
\item[(3)] $T(G_D)$ is isomorphic to the direct product of a subgroup of $\mathbf{T}(\mathbb{F})$ and $\mathbb{Z}^r$, where $r$ is at most $\mathbf{t} \cdot (\sharp \mathcal{S} -1)$, with equality when $\mathrm{Pic}^{0}(\mathcal{C})$ is a torsion group. In particular, we have $r =  \mathbf{t} \cdot (\sharp \mathcal{S} -1)$ when $\mathbb{F}$ is finite.
\end{itemize}
Moreover, the set $\mathfrak{U}/G$ of $G$-conjugacy classes of maximal unipotent subgroups in $G$ is in bijection with the double quotient $G \backslash \mathbf{G}(\mathcal{O}_{\mathcal{S}}) / \mathbf{B}(\mathcal{O}_{\mathcal{S}})$.
\end{corollary}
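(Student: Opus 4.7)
The plan is to assemble this corollary from the earlier machinery rather than to reprove anything from scratch. The principality of $\mathcal{O}_{\mathcal{S}}$ enters only through one clean consequence: by Corollary~\ref{coro max unip sub A principal} (or equivalently Theorem~\ref{main teo 3} applied to $\mathrm{Pic}(\mathcal{O}_{\mathcal{S}})=0$), the group $\mathbf{G}(\mathcal{O}_{\mathcal{S}})$ acts transitively on $\SecD$. Hence any $D\in\SecD$ can be written $D=g^{-1}\cdot D_0$ for some $g\in\mathbf{G}(\mathcal{O}_{\mathcal{S}})$, and one is free to choose $h_D=g$.

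With this choice, I would deduce Statements (1) and (2) exactly as in Equations~\eqref{G_D is g-conjugated} and~\eqref{U(G_D) is g-conjugated} of the proof of Lemma~\ref{lemma G normal and its subgroups}: normality of $G$ in $\mathbf{G}(\mathcal{O}_{\mathcal{S}})$ gives $g^{-1}Gg=G$, so
\[ G_D = h_D^{-1}\mathbf{B}(k)h_D\cap G = g^{-1}\bigl(\mathbf{B}(k)\cap G\bigr)g, \]
and the same identity with $\mathbf{U}^+(k)$ in place of $\mathbf{B}(k)$ gives (2). Statement (3) then follows by applying Lemma~\ref{lemma G normal and its subgroups}(3): since $\Sigma_0=\{D_0\}$ under the principality assumption, the rank of the free part of $T(G_D)$ equals the invariant $r_{D_0}$ attached to $\mathbf{G}(\mathcal{O}_{\mathcal{S}})$. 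This invariant is computed explicitly in Example~\ref{ex for D_0} via $\mathbf{T}(\mathcal{O}_{\mathcal{S}})\cong (\mathcal{O}_{\mathcal{S}}^{\times})^{\mathbf{t}}$ and the Dirichlet unit theorem, giving $r_{D_0}\le \mathbf{t}\cdot(\sharp\mathcal{S}-1)$, with equality when $\mathrm{Pic}^0(\mathcal{C})$ is a torsion group; and the torsion part is controlled by Proposition~\ref{prop torus in F}. The case of finite $\mathbb{F}$ follows from Weil's theorem on the finiteness of the Jacobian.

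For the final bijection, I would first invoke Theorem~\ref{main teo 1}(3) to identify $\mathfrak{U}/G$ with the set of $G$-orbits on $\SecD$. Next, the $\mathbf{G}(k)$-equivariant identification $\SecD\cong\mathbf{G}(k)/\mathbf{B}(k)$ recalled in~\S\ref{section rational chambers} (via Lemma~\ref{lemma stab of D_0}), combined with the transitivity of $\mathbf{G}(\mathcal{O}_{\mathcal{S}})$ on $\SecD$ and the equality $\mathrm{Stab}_{\mathbf{G}(\mathcal{O}_{\mathcal{S}})}(\D_0)=\mathbf{B}(k)\cap\mathbf{G}(\mathcal{O}_{\mathcal{S}})=\mathbf{B}(\mathcal{O}_{\mathcal{S}})$, yields a $\mathbf{G}(\mathcal{O}_{\mathcal{S}})$-equivariant bijection $\SecD\cong\mathbf{G}(\mathcal{O}_{\mathcal{S}})/\mathbf{B}(\mathcal{O}_{\mathcal{S}})$. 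Passing to $G$-orbits gives the desired bijection $\mathfrak{U}/G\leftrightarrow G\backslash\mathbf{G}(\mathcal{O}_{\mathcal{S}})/\mathbf{B}(\mathcal{O}_{\mathcal{S}})$.

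No step is a real obstacle: the corollary is a packaging of Theorems~\ref{main teo 1}, \ref{main teo 3}, \ref{main teo 2}, of Example~\ref{ex for D_0}, and of Lemma~\ref{lemma G normal and its subgroups} under the single simplifying hypothesis that $\mathbf{G}(\mathcal{O}_{\mathcal{S}})$ acts transitively on $\SecD$. The only delicate point is making sure one uses the same representative $h_D=g\in\mathbf{G}(\mathcal{O}_{\mathcal{S}})$ throughout, so that the normality of $G$ can be exploited to produce honest $\mathbf{G}(\mathcal{O}_{\mathcal{S}})$-conjugations (rather than merely $\mathbf{G}(k)$-conjugations) in (1) and (2).
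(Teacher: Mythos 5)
Your proposal is correct and follows essentially the same route as the paper: principality gives transitivity of $\mathbf{G}(\mathcal{O}_{\mathcal{S}})$ on $\SecD$ via Theorem~\ref{main teo 3}, statements (1)--(3) are then read off from Lemma~\ref{lemma G normal and its subgroups} together with Example~\ref{ex for D_0}, and the final bijection comes from Theorem~\ref{main teo 1}(3) combined with the $\mathbf{G}(\mathcal{O}_{\mathcal{S}})$-equivariant identification $\SecD\cong\mathbf{G}(\mathcal{O}_{\mathcal{S}})/\mathbf{B}(\mathcal{O}_{\mathcal{S}})$. Your explicit insistence on choosing the representative $h_D=g\in\mathbf{G}(\mathcal{O}_{\mathcal{S}})$ is exactly the point that makes the conjugations land in $\mathbf{G}(\mathcal{O}_{\mathcal{S}})$, and it is the same mechanism used in the paper's proof of Lemma~\ref{lemma G normal and its subgroups}.
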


\begin{proof}
Since $\mathcal{O}_{\mathcal{S}}$ is a principal ideal domain, it follows from Theorem~\ref{main teo 3} that $\mathbf{G}(\mathcal{O}_{\mathcal{S}})$ acts transitively on $\SecD$. 
In other words, each $\D \in \SecD$ belongs to the $\mathbf{G}(\OS)$-orbit of $\D_0$.
Thus, Statement~(1) and~(2) follows from Lemma~\ref{lemma G normal and its subgroups}~(1)-(2). 
Moreover, it follows from Lemma~\ref{lemma G normal and its subgroups}~(3) that $T(G_D)$ is isomorphic to the direct product of a subgroup of $\mathbf{T}(\mathbb{F})$ and a $\mathbb{Z}$-free group of rank $r_{0,D_0}=r_{D_0}$. Thus, Statement~(3) follows from Example~\ref{ex for D_0}.

Now, we prove that $\mathfrak{U}/G$ is in bijection with $G \backslash \mathbf{G}(\mathcal{O}_{\mathcal{S}}) / \mathbf{B}(\mathcal{O}_{\mathcal{S}})$.
Since $\mathbf{G}(\mathcal{O}_{\mathcal{S}})$ acts transitively on $\SecD$, and $\mathrm{Stab}_{\mathbf{G}(\mathcal{O}_{\mathcal{S}})}(\D_0)=\mathbf{G}(\mathcal{O}_{\mathcal{S}}) \cap \mathbf{B}(k)=\mathbf{B}(\mathcal{O}_{\mathcal{S}})$, there exists a $\mathbf{G}(\OS)$-equivariant bijection between $\SecD$ and $\mathbf{G}(\mathcal{O}_{\mathcal{S}}) / \mathbf{B}(\mathcal{O}_{\mathcal{S}})$.
Therefore, the set of $G$-orbits on $\SecD$ is in bijection with $G \backslash \mathbf{G}(\mathcal{O}_{\mathcal{S}}) / \mathbf{B}(\mathcal{O}_{\mathcal{S}})$, whence the result follows.
\end{proof}

\section{Applications to principal congruence subgroups}\label{section examples}

Assuming that $\mathbb{F}$ is a finite field of characteristic $p$, in this section,
we present some explicit examples on the description of maximal unipotent subgroups introduced in Theorems~\ref{main teo 1} and~\ref{main teo 2}.

Let $I$ be a proper ideal of $\mathcal{O}_{\mathcal{S}}$.
The principal congruence subgroup $\Gamma_I$ defined from $I$ is the kernel of the group homomorphism $\pi_I: \mathbf{G}(\mathcal{O}_{\mathcal{S}}) \to \mathbf{G}(\mathcal{O}_{\mathcal{S}}/I)$ induced by the projection $\pi_I: \mathcal{O}_{\mathcal{S}} \to \mathcal{O}_{\mathcal{S}}/I$.
A principal congruence subgroup of $\mathbf{G}(\OS)$ is a group of the form $\Gamma_I$, for some ideal $I\subseteq \mathcal{O}_{\mathcal{S}}$.
Such groups have finite index in $\mathbf{G}(\OS)$, since $\OS/I$ is finite whenever $\mathbb{F}$ is finite.
In particular, principal congruence subgroups are arithmetic subgroups.
Next result is an elementary extension of~\cite[Lemma 3.3]{MPSZ}, which is valid for rank one groups, to the context of higher rank.

\begin{lemma}\label{lemma torsion elements in Gamma}
Assume that $\mathbb{F}$ is finite of characteristic $p$.
The torsion of $\Gamma_I$ is $p$-primary.
\end{lemma}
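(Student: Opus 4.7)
The plan is to reduce to an element whose order is coprime to $p$ and then show it must be trivial. Let $g\in\Gamma_I$ be of finite order $n$, write $n=p^{a}m$ with $\gcd(m,p)=1$, and set $g':=g^{p^{a}}\in\Gamma_I$; since it suffices to show that the order $m$ of $g'$ equals $1$, I will argue that any $g'\in\Gamma_I$ of order coprime to $p$ is the identity.

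Since $I$ is proper and $\mathcal{O}_{\mathcal{S}}$ is a Dedekind domain, there exists a maximal ideal $\mathfrak{m}$ of $\mathcal{O}_{\mathcal{S}}$ containing $I$, so that $\Gamma_I\subseteq\Gamma_{\mathfrak{m}}$. The residue field $\mathcal{O}_{\mathcal{S}}/\mathfrak{m}$ is the residue field at a closed point of $\mathcal{C}\setminus\mathcal{S}$, hence a finite extension of $\mathbb{F}$, and in particular has characteristic $p$; consequently the image of $m$ in $\mathcal{O}_{\mathcal{S}}$ already lies in $\mathbb{F}^{\times}\subseteq\mathcal{O}_{\mathcal{S}}^{\times}$. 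Using the closed $\mathbb{Z}$-embedding $\rho:\mathbf{G}\hookrightarrow\mathrm{SL}_{N,\mathbb{Z}}$ recalled in \S\ref{section intro} (which is compatible with reduction modulo every ideal), I would translate the condition $g'\in\Gamma_{\mathfrak{m}}$ into $\rho(g')=\mathrm{id}+M$ with $M\in\mathfrak{m}\cdot M_{N}(\mathcal{O}_{\mathcal{S}})$.

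Expanding $(\mathrm{id}+M)^{m}=\mathrm{id}$ by the binomial formula and rearranging yields
\[mM=-\sum_{k=2}^{m}\binom{m}{k}M^{k}.\]
The core of the argument is an induction on $j\geq 1$ showing $M\in\mathfrak{m}^{j}M_{N}(\mathcal{O}_{\mathcal{S}})$. The base case $j=1$ is the hypothesis; assuming the statement at stage $j$, every term $M^{k}$ with $k\geq 2$ lies in $\mathfrak{m}^{2j}M_{N}(\mathcal{O}_{\mathcal{S}})\subseteq\mathfrak{m}^{j+1}M_{N}(\mathcal{O}_{\mathcal{S}})$, so the displayed identity forces $mM\in\mathfrak{m}^{j+1}M_{N}(\mathcal{O}_{\mathcal{S}})$; invertibility of $m$ in $\mathcal{O}_{\mathcal{S}}$ then upgrades this to $M\in\mathfrak{m}^{j+1}M_{N}(\mathcal{O}_{\mathcal{S}})$. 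Hence the entries of $M$ lie in $\bigcap_{j\geq 1}\mathfrak{m}^{j}$, which is $0$ by the Krull intersection theorem in the Noetherian domain $\mathcal{O}_{\mathcal{S}}$. Therefore $M=0$, so $\rho(g')=\mathrm{id}$, and by injectivity of $\rho$ on points we get $g'=\mathrm{id}$, i.e.\ $m=1$.

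There is no serious obstacle: the only point requiring care is the existence of the maximal ideal $\mathfrak{m}\supseteq I$ (guaranteed by properness of $I$) and the observation that its residue characteristic coincides with that of $\mathbb{F}$, which is what makes $m$ a unit. The rest is the standard filtration/pro-$p$ argument showing that the congruence kernel $\Gamma_{\mathfrak{m}}$ is torsion of $p$-power order when the residue field has characteristic $p$.
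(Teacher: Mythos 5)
Your proof is correct, but it follows a genuinely different route from the paper. You run the standard congruence-filtration argument: writing $\rho(g')=\mathrm{id}+M$ with $M\in\mathfrak{m}\,M_N(\mathcal{O}_{\mathcal{S}})$, you use $(\mathrm{id}+M)^m=\mathrm{id}$ together with the invertibility of $m$ (coprime to $p$) in the characteristic-$p$ ring $\mathcal{O}_{\mathcal{S}}$ to push $M$ into $\mathfrak{m}^{j}M_N(\mathcal{O}_{\mathcal{S}})$ for every $j$, and then kill it by Krull intersection. The paper instead reduces the characteristic polynomial $P_{\rho(g)}$ modulo $I$ to get $(T-1)^n$, observes that the eigenvalues of a torsion element are roots of unity so that the coefficients of $P_{\rho(g)}$ lie in the algebraic closure of $\mathbb{F}$ in $k$ (which is $\mathbb{F}$ by the standing hypothesis of \S~\ref{section intro}), concludes that $P_{\rho(g)}=(T-1)^n$ exactly, hence that $\rho(g)$ is unipotent and therefore of $p$-power order. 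Your argument is more elementary and in fact more general: it needs neither the finiteness of $\mathbb{F}$ nor the assumption that $\mathbb{F}$ is algebraically closed in $k$, only that $\mathcal{O}_{\mathcal{S}}$ is a Noetherian domain of characteristic $p$ and that $I$ is proper; the paper's route trades this for a short global argument resting on the arithmetic of $k$. Two cosmetic remarks: the invertibility of $m$ in $\mathcal{O}_{\mathcal{S}}$ comes simply from $\mathcal{O}_{\mathcal{S}}$ being an $\mathbb{F}_p$-algebra, not from the residue characteristic of $\mathfrak{m}$ (indeed you could run the whole induction with $I$ itself in place of $\mathfrak{m}$, since $\bigcap_j I^j=0$ already holds for a proper ideal of a Noetherian domain); and the compatibility $\pi_I\circ\rho=\rho\circ\pi_I$ that you invoke to get $M\in I\cdot M_N(\mathcal{O}_{\mathcal{S}})$ is exactly the functoriality the paper also uses, so no gap there.
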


\begin{proof}
According to \cite[\S 9.1.19(c)]{BT1}, there exists an injective homomorphism of $\mathbb{Z}$-groups  $\rho: \mathbf{G} \to \mathrm{SL}_{n,\mathbb{Z}}$. This is a faithful linear representation of $\mathbf{G}$. In particular, for each (commutative) ring $R$, the homomorphism $\rho$ induces an injective group homomorphism $\mathbf{G}(R) \to \mathrm{SL}_{n, \mathbb{Z}}(R)$, which, by abuse of notation, we denote by $\rho$.
Let $g \in \Gamma_{I}$ be a finite order element and let $P_{g}(T)$ be the characteristic polynomial of $\rho(g)$ over $k$.
When we apply the ring homomorphism $\pi_I: \OS \to \OS/I$ to each coefficient of $P_g(T)$, we obtain the a polynomial $\pi_I(P_g(T)) \in (\OS/I)[T]$.
Since $\pi_I: \OS \to \OS/I$ is a ring homomorphism, we have
$$\pi_I(P_g(T))= \pi_I\left(\mathrm{det}\left(\rho(g)-T \cdot \mathrm{id}\right)\right) = \mathrm{det}\left(\pi_I(\rho(g))-T \cdot \mathrm{id}\right).$$
Moreover, since $\pi_I(\rho(g))=\rho (\pi_I(g))$ and $\pi_I(g)=\mathrm{id}$, we get
$$\mathrm{det}\left(\pi_I(\rho(g))-T \cdot \mathrm{id}\right)=\mathrm{det}\left(\rho(\pi_I(g))-T \cdot \mathrm{id}\right)=\det\left(\mathrm{id}-T \cdot \mathrm{id}\right)=(1-T)^n.$$
Then $\pi_I(P_g(T))=(1-T)^n$.
But, since $g$ is a torsion element, $g^m=\mathrm{id}$, for some $m \in \mathbb{Z}$.
Therefore $\rho(g)^m=\mathrm{id}$, whence we deduce that each eigenvalue of $\rho(g)$ is a root of unity. 
In particular, each coefficient of $P_g(T)$ lies in the algebraic closure of $\mathbb{F}$ in $k$,
which is $\mathbb{F}$ itself since $\mathcal{C}$ is assumed geometrically integral over $\mathbb{F}$.
Thus, the polynomial $P_g(T)$ belongs to $\mathbb{F}[T]$. 
In particular, each coefficient of $P_g(T)$ lies in $\mathbb{F}$, and we have $P_g(T)=\pi_I(P_g(T))=(T-1)^n$, since $\mathbb{F}$ is a subring of $\OS/I$.
Hence, the matrix $\rho(g)$ is unipotent.
Since $k$ has characteristic $p$, any unipotent element of $\mathrm{SL}_{n,\mathbb{Z}}(k)$ has $p$-power order.
Therefore, $\rho(g)^{p^t}=\mathrm{id}$, for some $t \in \mathbb{Z}$.
Thus, we conclude that $g^{p^t}=\mathrm{id}$.
\end{proof}

\begin{corollary}\label{unipotent of principal cong subgroups when S={p}}
Assume that $\mathbb{F}$ is finite of characteristic $p$. 
Assume also that $\mathcal{S}=\lbrace \p \rbrace$.
Fix a set $\lbrace \D_{\sigma}: \sigma \in \Sigma \rbrace$ of representatives of the $G$-orbits in $\SecD$.
Then:
\begin{itemize}
\item[(1)] $\mathfrak{U}=\left \lbrace \mathrm{Stab}_{\Gamma_I}(\D) : \D \in \SecD \right\rbrace$ is the set of maximal unipotent subgroup of $\Gamma_I$, and
\item[(2)] $\mathfrak{U}/\Gamma_I=\left \lbrace \mathrm{Stab}_{\Gamma_I}(\D_{\sigma}) : \sigma \in \Sigma \right\rbrace$ is a set of representatives of the $\Gamma_I$-conjugacy classes in $\mathfrak{U}$.
\end{itemize}
\end{corollary}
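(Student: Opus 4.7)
The plan is to apply Theorem~\ref{main teo 2}(5) to conclude that for $\Gamma_I$, each stabilizer $\mathrm{Stab}_{\Gamma_I}(D)$ is actually unipotent, and then to invoke Theorem~\ref{main teo 1}(2)--(3) to recover the two statements of the corollary. All the hard work is already encapsulated in those theorems; the corollary is essentially a matter of checking that the hypotheses apply.

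First I would verify the hypotheses of Theorem~\ref{main teo 2}(5) for $G = \Gamma_I$: by construction $\Gamma_I \subseteq \mathbf{G}(\OS)$ (as the kernel of reduction modulo $I$), we assume $\mathcal{S} = \{\p\}$ and $\mathbb{F}$ finite of characteristic $p$, and Lemma~\ref{lemma torsion elements in Gamma} precisely states that the torsion of $\Gamma_I$ is $p$-primary. Applying Theorem~\ref{main teo 2}(5) gives, for every $D \in \SecD$, $T((\Gamma_I)_D) = \{\mathrm{id}\}$, so that $(\Gamma_I)_D = U((\Gamma_I)_D)$. Since $(\Gamma_I)_D = \mathrm{Stab}_{\Gamma_I}(D)$ by Equation~\eqref{eq G_s and borel}, we obtain $\mathrm{Stab}_{\Gamma_I}(D) = U((\Gamma_I)_D)$.

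For Statement~(1), Theorem~\ref{main teo 1}(2) identifies the set of maximal unipotent subgroups of $\Gamma_I$ with $\{U((\Gamma_I)_D) : D \in \SecD\}$, and the identification $U((\Gamma_I)_D) = \mathrm{Stab}_{\Gamma_I}(D)$ just proved transforms this into $\{\mathrm{Stab}_{\Gamma_I}(D) : D \in \SecD\}$, as desired. For Statement~(2), Theorem~\ref{main teo 1}(3) gives $\mathfrak{U}/\Gamma_I = \{U((\Gamma_I)_{\sigma}) : \sigma \in \Sigma\}$ as a system of representatives of the $\Gamma_I$-conjugacy classes, and substituting $U((\Gamma_I)_{\sigma}) = \mathrm{Stab}_{\Gamma_I}(D_\sigma)$ yields the claim.

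There is no real obstacle here, since the structural work has already been carried out. The only point worth checking carefully is that the hypothesis on $p$-primary torsion in Theorem~\ref{main teo 2}(5) applies to an arbitrary principal congruence subgroup $\Gamma_I$ (not only to some explicit family), and this is exactly the content of Lemma~\ref{lemma torsion elements in Gamma}. So the proof reduces to a one-line invocation of Theorem~\ref{main teo 2}(5) followed by two one-line invocations of Theorem~\ref{main teo 1}.
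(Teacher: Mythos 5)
Your proposal is correct and follows essentially the same route as the paper: invoke Lemma~\ref{lemma torsion elements in Gamma} to verify the $p$-primary torsion hypothesis, apply Theorem~\ref{main teo 2}(5) to conclude $\mathrm{Stab}_{\Gamma_I}(\D)=U((\Gamma_I)_D)$, and then conclude via Theorem~\ref{main teo 1}(2)--(3). No issues.
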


\begin{proof}
Since $\mathbb{F}$ is finite of characteristic $p$, the torsion of $\Gamma_I$ is $p$-primary according to Lemma~\ref{lemma torsion elements in Gamma}. Since $\mathcal{S}=\lbrace \p \rbrace$, it follows from Theorem~\ref{main teo 2}(5) that, for any $\D \in \SecD$, we have $U((\Gamma_I)_D)=(\Gamma_I)_D=\mathrm{Stab}_{\Gamma_I}(\D)$. Then, the result follows from Theorem~\ref{main teo 1}(2)-(3).
\end{proof}

\begin{example}
Statements~(1) and~(2) of Corollary~\ref{unipotent of principal cong subgroups when S={p}} do not hold for arbitrary finite subsets $\mathcal{S}$.
For instance, let us take $\mathcal{C}=\mathbb{P}^1_{\mathbb{F}}$ and $\mathcal{S}=\lbrace 0, \infty \rbrace$, so that $\OS=\mathbb{F}[t,t^{-1}]$. 
Let $I=(t-1)\OS$, so that $\pi_I:\OS \to \OS/I$ corresponds to the ring homomorphism defined by the evaluation at $t=1$.
Let $\mathbf{G}=\mathrm{SL}_n$ and let $\Gamma_{I}$ be the corresponding congruence subgroup. 
We identify $\mathbf{T}$ (resp. $\mathbf{B}$) with the diagonal (resp. upper triangular) subgroup of $\mathrm{SL}_n$.
Let
$$T:=\lbrace \mathrm{diag}(t^{m_1}, \cdots, t^{m_n}) : m_1+ \cdots+ m_n=0, m_1, \cdots, m_n \in \mathbb{Z} \rbrace.$$
Since $\pi_I(t)=1$, we have $T \subseteq \Gamma_I$.
Moreover, since $T \subset \mathbf{B}(k)$, we have $T \subseteq \mathrm{Stab}_{\Gamma_I}(\D_0)$. This proves that $\mathrm{Stab}_{\Gamma_I}(\D_0)$ is not unipotent. Thus, Statements~(1) and~(2) of Corollary~\ref{unipotent of principal cong subgroups when S={p}} do not hold in this context.
\end{example}

\begin{corollary}\label{corollary principal cong sub and principal ideal domains}
Assume that $\mathbb{F}$ is finite of characteristic $p$.
Assume also that $\mathcal{O}_{\mathcal{S}}$ is a principal ideal domain.
Let $\mathfrak{U}$ and $\mathfrak{U}/\Gamma_I$ as in Theorem~\ref{main teo 1}. Then
\begin{itemize}
\item[(1)] Any $U \in \mathfrak{U}$ is $\mathbf{G}(\mathcal{O}_{\mathcal{S}})$-conjugate to $\Gamma_I^{+}:=\mathbf{U}^{+}(k) \cap \Gamma_I$, and
\item[(2)] $\mathfrak{U}/\Gamma_I$ is in bijection with $\Gamma_I \backslash \mathbf{G}(\mathcal{O}_{\mathcal{S}}) / \mathbf{B}(\mathcal{O}_{\mathcal{S}})$.
\end{itemize}
\end{corollary}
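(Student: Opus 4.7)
The plan is to derive both statements as essentially immediate consequences of Corollary~\ref{coro O_s principal and G normal} applied to $G = \Gamma_I$. The key observation I would make first is that $\Gamma_I$ is a normal subgroup of $\mathbf{G}(\mathcal{O}_{\mathcal{S}})$, since it is by definition the kernel of the reduction group homomorphism $\pi_I \colon \mathbf{G}(\mathcal{O}_{\mathcal{S}}) \to \mathbf{G}(\mathcal{O}_{\mathcal{S}}/I)$. This normality is exactly the hypothesis required to invoke Corollary~\ref{coro O_s principal and G normal}, together with the assumption that $\mathcal{O}_{\mathcal{S}}$ is a principal ideal domain.

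For statement~(1), I would simply unwind what Corollary~\ref{coro O_s principal and G normal}(2) asserts in this setting: for every $D \in \SecD$, the group $U((\Gamma_I)_D)$ is $\mathbf{G}(\mathcal{O}_{\mathcal{S}})$-conjugate to $\mathbf{U}^+(k) \cap \Gamma_I = \Gamma_I^+$. Since Theorem~\ref{main teo 1}(2) tells us that $\mathfrak{U} = \lbrace U((\Gamma_I)_D) : D \in \SecD \rbrace$ is the set of \emph{all} maximal unipotent subgroups of $\Gamma_I$, this immediately yields~(1). If desired, one can give the argument in a self-contained way: since $\mathrm{Pic}(\mathcal{O}_{\mathcal{S}})$ is trivial, Theorem~\ref{main teo 3} shows that $\mathbf{G}(\mathcal{O}_{\mathcal{S}})$ acts transitively on $\SecD$, so any $D = g^{-1}\cdot D_0$ for some $g \in \mathbf{G}(\mathcal{O}_{\mathcal{S}})$, and then normality of $\Gamma_I$ gives
\[
U((\Gamma_I)_D) = g^{-1}\mathbf{U}^+(k)g \cap \Gamma_I = g^{-1}\mathbf{U}^+(k)g \cap g^{-1}\Gamma_I g = g^{-1}\Gamma_I^+ g.
\]

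For statement~(2), I would invoke the final assertion of Corollary~\ref{coro O_s principal and G normal}, namely that $\mathfrak{U}/G$ is in bijection with $G \backslash \mathbf{G}(\mathcal{O}_{\mathcal{S}})/\mathbf{B}(\mathcal{O}_{\mathcal{S}})$, specialized to $G = \Gamma_I$. The underlying idea, which I would briefly recall for clarity, is that transitivity of the $\mathbf{G}(\mathcal{O}_{\mathcal{S}})$-action on $\SecD$ combined with $\mathrm{Stab}_{\mathbf{G}(\mathcal{O}_{\mathcal{S}})}(D_0) = \mathbf{B}(k) \cap \mathbf{G}(\mathcal{O}_{\mathcal{S}}) = \mathbf{B}(\mathcal{O}_{\mathcal{S}})$ identifies $\SecD$ with the coset space $\mathbf{G}(\mathcal{O}_{\mathcal{S}})/\mathbf{B}(\mathcal{O}_{\mathcal{S}})$, so that $\Gamma_I$-orbits in $\SecD$ are in bijection with the double coset space $\Gamma_I \backslash \mathbf{G}(\mathcal{O}_{\mathcal{S}})/\mathbf{B}(\mathcal{O}_{\mathcal{S}})$; Theorem~\ref{main teo 1}(3) then provides the bijection between the latter and $\mathfrak{U}/\Gamma_I$. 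Since every ingredient is already established upstream, there is no substantive obstacle in this proof: the only thing to verify is the normality of $\Gamma_I$, which is immediate.
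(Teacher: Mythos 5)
Your proof is correct and follows the same route as the paper: both reduce everything to Corollary~\ref{coro O_s principal and G normal} applied to $G=\Gamma_I$, using only the normality of $\Gamma_I$ in $\mathbf{G}(\mathcal{O}_{\mathcal{S}})$ as the hypothesis to check. The extra self-contained unwinding you give (transitivity via Theorem~\ref{main teo 3} plus the computation $U((\Gamma_I)_D)=g^{-1}\Gamma_I^{+}g$) is a correct expansion of what that corollary already packages, not a different argument.
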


\begin{proof}
Since $\Gamma_I$ is a normal subgroup of $\mathbf{G}(\OS)$, Statement~(1) follows from Corollary~\ref{coro O_s principal and G normal}~(2).
Moreover, it also follows from Corollary~\ref{coro O_s principal and G normal} that $\mathfrak{U}/\Gamma_I$ is in bijection with $\Gamma_I \backslash \mathbf{G}(\mathcal{O}_{\mathcal{S}}) / \mathbf{B}(\mathcal{O}_{\mathcal{S}})$, which concludes the proof.
\end{proof}

\begin{example}\label{ex max unip sln}
Let $\mathbf{G}=\mathrm{SL}_n$. The subgroup $\mathrm{B}_n$ (resp. $\mathrm{D}_n$, resp. $\mathrm{U}_n$) of upper triangular (resp. diagonal, resp. unipotent upper triangular) matrices in $\mathrm{SL}_n$ is a Borel (resp. a maximal torus, resp. a maximal unipotent) subgroup of $\mathrm{SL}_n$ defined over $\mathbb{Z}$.
Let $\varTheta_I$ be the set of nilpotent upper triangular matrices with coefficients in $I$.
The group $\Gamma_I^{+}$ equals $\mathrm{id}+\varTheta_I=\lbrace \mathrm{id}+ \theta: \theta \in \varTheta_I \rbrace$.
By a straightforward computation in $\mathrm{SL}_n$, the group $\Gamma_I^{+}$ is the group generated by $\left \lbrace \theta_{\alpha}(I): \alpha \in \Phi^{+} \right\rbrace$.
This provides an example of the group $\Gamma_I^{+}$ introduced in Corollary~\ref{corollary principal cong sub and principal ideal domains}~(1).

Since the group homomorphism $\pi_I: \mathrm{SL}_n(\mathcal{O}_{\mathcal{S}}) \to \mathrm{SL}_n(\mathcal{O}_{\mathcal{S}}/I)$ is surjective, we have  $\Gamma_I \backslash \mathrm{SL}_n(\mathcal{O}_{\mathcal{S}}) \cong \mathrm{SL}_n(\mathcal{O}_{\mathcal{S}}/I)$.
Moreover, since $\left(\Gamma_I \cap \mathrm{B}_n(\mathcal{O}_{\mathcal{S}}) \right) \backslash \mathrm{B}_n(\mathcal{O}_{\mathcal{S}}) \cong \mathrm{B}_n(\mathcal{O}_{\mathcal{S}}/I)$, we get that $\Gamma_I \backslash \mathrm{SL}_n(\mathcal{O}_{\mathcal{S}}) / \mathrm{B}_n(\mathcal{O}_{\mathcal{S}})$ is in bijection with $\mathrm{SL}_n(\mathcal{O}_{\mathcal{S}}/I) / \mathrm{B}_n(\mathcal{O}_{\mathcal{S}}/I)$.
Therefore, it follows from Corollary~\ref{corollary principal cong sub and principal ideal domains}~(2) that the set $\mathfrak{U}/\Gamma_I$, consisting in the $\Gamma_I$-conjugacy classes of maximal unipotent subgroup of $\Gamma_I$, is in bijection with $\mathrm{SL}_n(\mathcal{O}_{\mathcal{S}}/I) / \mathrm{B}_n(\mathcal{O}_{\mathcal{S}}/I)$.
This bijection allows us to do some explicit computations on $\mathfrak{U}/\Gamma_I$.
In particular, when $I$ is assumed to be a prime ideal, the ring $\mathbb{F}':=\mathcal{O}_{\mathcal{S}}/I$ is a field, since $\OS$ is a Dedekind domain. 
In that case, the exact sequence of algebraic varieties
\[
1 \rightarrow \mathrm{B}_n \xrightarrow{\iota} \mathrm{SL}_n \xrightarrow{p} \mathrm{SL}_n/\mathrm{B}_n \rightarrow 1,
\]
induces the following long exact sequence (cf.~\cite[Ch. III, \S 4, 4.6]{DG})
\[
1 \to \mathrm{B}_n(\mathbb{F}') \rightarrow \mathrm{SL}_n(\mathbb{F}') \rightarrow (\mathrm{SL}_n/\mathrm{B}_n)(\mathbb{F}') \rightarrow H^1_{\text{\'et}}\big(\operatorname{Spec}(\mathbb{F}'), \mathrm{B}_n\big) \xrightarrow{} H^1_{\text{\'et}}\big(\operatorname{Spec}(\mathbb{F}'),\mathrm{SL}_n\big).
\]
But $H^1_{\text{\'et}}\big(\operatorname{Spec}(\mathbb{F}'), \mathrm{B}_n\big)= H^1_{\text{\'et}}\big(\operatorname{Spec}(\mathbb{F}'), \mathrm{D}_n\big) \cong H^1_{\text{\'et}}\big(\operatorname{Spec}(\mathbb{F}'), \mathbb{G}_m\big)^{n-1}=\lbrace 0 \rbrace$, according to~\cite[Exp.~XXVI, Cor. 2.3]{SGA3-3} and Hilbert's Theorem 90.
Then, we get that $\mathrm{SL}_n(\mathbb{F}')/\mathrm{B}_n(\mathbb{F}')$ is in bijection with $(\mathrm{SL}_n/\mathrm{B}_n)(\mathbb{F}')$.
Thus, the set $\mathfrak{U}/\Gamma_I$ is in bijection with $(\mathrm{SL}_n/\mathrm{B}_n)(\mathbb{F}')$, in the case of a prime ideal $I$.
In other words, the set of $\Gamma_I$-conjugacy classes of maximal unipotent subgroups of $\Gamma_I$ is in bijection with the set of $\mathbb{F}'$-points of the Borel variety $\mathrm{SL}_n/\mathrm{B}_n$.
\end{example}

\section*{Acknowledgements}
The first author was partially supported by Anid-Conicyt by the Postdoctoral fellowship No $74220027$.

%\addcontentsline{toc}{section}{References}

\bibliographystyle{amsalpha}
\bibliography{refs.bib}

\end{document}